\tikzset{anchorbase/.style={baseline={([yshift=-0.5ex]current bounding box.center)}}}
\tikzstyle directed=[postaction={decorate,decoration={markings,
    mark=at position #1 with {\arrow{>}}}}]
\tikzstyle rdirected=[postaction={decorate,decoration={markings,
    mark=at position #1 with {\arrow{<}}}}]
\tikzset{
    partial ellipse/.style args={#1:#2:#3}{
        insert path={+ (#1:#3) arc (#1:#2:#3)}
    }
}
\definecolor{sea}{RGB}{46,139,87}
\definecolor{tomato}{RGB}{255,99,71}
\definecolor{orchid}{RGB}{143,40,194}
\definecolor{lava}{RGB}{207,16,32}
\definecolor{mydarkblue}{RGB}{10,10,170}
\definecolor{mygray}{gray}{0.75}
\renewcommand{\boxed}[1]{\text{\fboxsep=.1em\Ovalbox{\m@th$\displaystyle#1$}}}
\newcommand{\nicefrac}[2]{%
    \raise.5ex\hbox{#1}%
    \kern-.1em/\kern-.15em%
    \lower.25ex\hbox{#2}}
\newcommand{\C}{\mathbb{C}}
\newcommand{\Z}{\mathbb{Z}}
\newcommand{\N}{\mathbb{Z}_{\geq 0}}
\newcommand{\someR}{\mathbb{K}}
\newcommand{\shiftdeg}{\mathtt{a}}
\newcommand{\shiftme}[1]{\mathtt{#1}}
\newcommand{\qpar}{\mathrm{q}}
\newcommand{\vpar}{\mathrm{v}}
\newcommand{\zq}{{\mathscr{A}}}
\newcommand{\Zq}{{\mathscr{A}}_{\mathrm{gen}}}
\newcommand{\ZQ}{{\mathscr{A}}_n}
\newcommand{\CV}{\C[\vpar,\vpar^{-1}]}
\newcommand{\Cv}{\C(\vpar)}
\newcommand{\category}[1]{\boldsymbol{\mathrm{#1}}}
\newcommand{\Functor}[1]{\boldsymbol{#1}}
\newcommand{\functor}[1]{\boldsymbol{\mathcal{#1}}}
\newcommand{\nattrafo}[1]{\boldsymbol{\mathfrak{#1}}}
\newcommand{\vcomp}{\circ_v}
\newcommand{\hcomp}{\circ_h}
\newcommand{\qgtimes}{\,\widehat{\otimes}\,}
\newcommand{\Kar}{\boldsymbol{\mathrm{Kar}}}
\newcommand{\GG}[1]{[#1]_{\C(\vpar)}}
\newcommand{\Modl}[1]{#1\text{-}\mathrm{p}\boldsymbol{\mathrm{Mod}}^{\mathrm{gr}}}
\newcommand{\slmod}{\mathrm{U}_{\qpar}(\mathfrak{sl}_2)\text{-}\boldsymbol{\mathrm{Mod}}}
\newcommand{\slmods}{\mathrm{U}_{\qpar}(\mathfrak{sl}_2)\text{-}\boldsymbol{\mathrm{Mod}}_{\mathrm{s}}}
\newcommand{\dihedral}[1]{\mathrm{I}_2(#1)}
\newcommand{\cellrepgrg}{\mathrm{G}^{\vpar}}
\newcommand{\dihsgen}{{\color{sea}s}}
\newcommand{\dihtgen}{{\color{tomato}t}}
\newcommand{\dihugen}{{\color{orchid}u}}
\newcommand{\dihvgen}{{\color{orchid}v}}
\newcommand{\dihwgen}{{\color{orchid}w}}
\newcommand{\word}{{\color{orchid}w}}
\newcommand{\wnull}{{\color{orchid}w}_0}
\newcommand{\klbasis}{\theta}
\newcommand{\disgen}{\theta_{\color{sea}s}}
\newcommand{\ditgen}{\theta_{\color{tomato}t}}
\newcommand{\disfun}{\boldsymbol{\Theta}_{\color{sea}s}}
\newcommand{\ditfun}{\boldsymbol{\Theta}_{\color{tomato}t}}
\newcommand{\onecolor}{\boldsymbol{\mathcal{D}}_{\typea{1}}}
\newcommand{\dihedralcatbig}[1]{\boldsymbol{\mathcal{D}}^{\star}_{#1}}
\newcommand{\dihedralcat}[1]{\boldsymbol{\mathcal{D}}_{#1}}
\newcommand{\dicatsgen}{{\color{sea}s}}
\newcommand{\dicattgen}{{\color{tomato}t}}
\newcommand{\dicatstgen}{{\color{orchid}u}}
\newcommand{\dicattsgen}{{\color{orchid}v}}
\newcommand{\bullets}{\text{{\scalebox{1.075}{\color{sea}$\bullet$}}}}
\newcommand{\bullett}{\text{{\scalebox{.625}{\color{tomato}$\blacksquare$}}}}
\newcommand{\bulletst}{\text{{\scalebox{.675}{\color{orchid}$\blacklozenge$}}}}
\newcommand{\JWs}[1]{\mathrm{JW}^{\dicatsgen}_{#1}}
\newcommand{\JWt}[1]{\mathrm{JW}^{\dicattgen}_{#1}}
\newcommand{\JWst}[1]{\mathrm{JW}^{\dicatstgen}_{#1}}
\newcommand{\jws}[1]{\overline{\mathrm{JW}}^{\dicatsgen}_{#1}}
\newcommand{\jwst}[1]{\overline{\mathrm{JW}}^{\dicatstgen}_{#1}}
\newcommand{\bulletssmall}{\text{{\scalebox{.85}{\color{sea}$\bullet$}}}}
\newcommand{\bullettsmall}{\text{{\scalebox{.45}{\color{tomato}$\blacksquare$}}}}
\newcommand{\barbells}{\!
\begin{tikzpicture}[anchorbase, scale=.175]
	\draw [very thick, sea] (0,-.55) to (0,.55);
	\node at (0,-.55) {$\bulletssmall$};
	\node at (0,.55) {$\bulletssmall$};
\end{tikzpicture}\!
}
\newcommand{\barbellt}{\!
\begin{tikzpicture}[anchorbase, scale=.175]
	\draw [very thick, tomato] (0,-.55) to (0,.55);
	\node at (0,-.55) {$\bullettsmall$};
	\node at (0,.55) {$\bullettsmall$};
\end{tikzpicture}\!
}
\newcommand{\cellcatG}{\boldsymbol{\mathrm{G}}^{\mathrm{gr}}}
\newcommand{\cellcatGnext}{(\boldsymbol{\mathrm{G}}^{\prime}){}^{\mathrm{gr}}}
\newcommand{\cellcatADE}{\boldsymbol{\mathrm{G}}^{\mathrm{gr}}}
\newcommand{\cellcatA}[1]{\boldsymbol{\mathrm{A}}^{\mathrm{gr}}(#1)}
\newcommand{\cellcatAt}[1]{\boldsymbol{\tilde{\mathrm{A}}}{}^{\mathrm{gr}}(#1)}
\newcommand{\bifunctor}{\boldsymbol{\mathcal{V}}}
\newcommand{\functors}{\boldsymbol{\Theta}_{{\color{sea}s}}}
\newcommand{\functort}{\boldsymbol{\Theta}_{{\color{tomato}t}}}
\newcommand{\natid}{\mathfrak{id}}
\newcommand{\functorG}{\boldsymbol{\mathcal{G}}_{\infty}}
\newcommand{\functorGno}{\boldsymbol{\mathcal{G}}}
\newcommand{\functorADE}{\boldsymbol{\mathcal{G}}_{n}}
\newcommand{\functorADEt}{\boldsymbol{\widetilde{\mathcal{G}}}_{n}}
\newcommand{\Hom}{\mathrm{Hom}}
\newcommand{\twoHom}{2\mathrm{Hom}}
\newcommand{\Endf}{\mathrm{p}\boldsymbol{\mathrm{End}}}
\newcommand{\Endff}{\mathrm{p}\boldsymbol{\mathrm{End}}^{\star}}
\newcommand{\End}{\mathrm{End}}
\newcommand{\twoEnd}{2\mathrm{End}}
\newcommand{\pathm}{\,
\xy
(0,0)*{
\begin{tikzpicture}[anchorbase, scale=1]
	\draw [very thick, fill=black] (0,0) circle (.02cm);
\end{tikzpicture}
};
\endxy\,
}
\newcommand{\graphs}{{\color{sea}\underline{\mathrm{S}}}}
\newcommand{\grapht}{{\color{tomato}\overline{\mathrm{T}}}}
\newcommand{\pathG}{\mathrm{P}\g}
\newcommand{\algG}{\mathrm{Q}\g}
\newcommand{\algA}[1]{\mathrm{Q}\mathrm{A}_{#1}}
\newcommand{\algAt}[1]{\mathrm{Q}\tilde{\mathrm{A}}_{#1}}
\newcommand{\qalg}{Q}
\newcommand{\somealg}{\mathrm{A}}
\newcommand{\polyalg}{\mathrm{R}}
\newcommand{\coalg}{\mathrm{C}}
\newcommand{\dualalg}{\mathrm{D}}
\newcommand{\loopy}[1]{#1|#1}
\newcommand{\bbi}[1]{\raisebox{-.1cm}{{\color{sea}$\underline{\mathrm{#1}}$}}}
\newcommand{\bbj}[1]{\raisebox{.1cm}{{\color{tomato}$\overline{\mathrm{#1}}$}}}
\newcommand{\bbii}[1]{{\color{sea}\underline{\mathrm{#1}}}}
\newcommand{\bbjj}[1]{{\color{tomato}\overline{\mathrm{#1}}}}
\newcommand{\bbk}[1]{{\color{orchid}\mathrm{#1}}}
\newcommand{\gconnect}[1]{
{\ensuremath\!
\begin{tikzpicture}[anchorbase, scale=.25]
	\draw [very thick] (-.45,0) to (.45,0);
	\node at (0,-.35) {\tiny $#1$};
	\node at (0,.25) {\tiny $\phantom{a}$};
\end{tikzpicture}\!}
}
\newcommand{\iddia}{\mathrm{id}}
\newcommand{\idmap}{\mathrm{ID}}
\newcommand{\ADE}{\mathrm{ADE}}
\newcommand{\g}{G}
\newcommand{\G}{\mathrm{G}}
\newcommand{\typeA}{\mathrm{A}}
\newcommand{\typeD}{\mathrm{D}}
\newcommand{\typeE}{\mathrm{E}}
\newcommand{\typeAt}{\tilde{\mathrm{A}}}
\newcommand{\typeDt}{\tilde{\mathrm{D}}}
\newcommand{\typeEt}{\tilde{\mathrm{E}}}
\newcommand{\typea}[1]{\mathrm{A}_{#1}}
\newcommand{\typeat}[1]{\tilde{\mathrm{A}}_{#1}}
\newcommand{\typed}[1]{\mathrm{D}_{#1}}
\newcommand{\typee}[1]{\mathrm{E}_{#1}}
\newcommand{\functorGs}{\boldsymbol{\mathcal{H}}}
\newcommand{\scalargt}{{\color{orchid}\tau}}
\newcommand{\scalargtt}{{\color{orchid}\upsilon}}
\newcommand{\scalargttt}{{\color{orchid}\varsigma}}
\newcommand{\lscalar}{\lambda}
\newcommand{\llscalar}{\vec{\lscalar}}
\newcommand{\zgl}{{\mathscr{A}}_{\g,\llscalar,\qpar}}
\newcommand{\seamerge}{\,\tikz[baseline=1,scale=0.08]{\draw[very thick, sea] (0,0) to (1,1.5); \draw[very thick,sea] (2,0) to (1,1.5);\draw[very thick,sea] (1,1.5) to (1,3);}\,}
\newcommand{\seasplit}{\,\tikz[baseline=1,scale=0.08]{\draw[very thick,sea] (0,3) to (1,1.5); \draw[very thick,sea] (2,3) to (1,1.5);\draw[very thick, sea] (1,1.5) to (1,0);}\,}
\newcommand{\seadotdown}{\,\tikz[baseline=0.5,scale=0.08]{\draw[very thick,sea] (0,0.7) -- ++(0,2); \node[circle,fill,inner sep=1pt,sea] at (0,0.7) {};}\,}
\newcommand{\seadotup}{\,\tikz[baseline=0.5,scale=0.08]{\draw[very thick,sea] (0,1.7) -- ++(0,-2); \node[circle,fill,inner sep=1pt,sea] at (0,1.7) {};}\,}
\newcommand{\seabarb}{\,\tikz[baseline=-.5,scale=0.08]{\draw[very thick,sea] (0,1.7) -- ++(0,-2); \node[circle,fill,inner sep=1pt,sea] at (0,1.7) {}; \node[circle,fill,inner sep=1pt,sea] at (0,-.3) {};}\,}
\newcommand{\tomerge}{\,\tikz[baseline=1,scale=0.08]{\draw[very thick,tomato] (0,0) to (1,1.5); \draw[very thick,tomato] (2,0) to (1,1.5);\draw[very thick,tomato] (1,1.5) to (1,3);}\,}
\newcommand{\tosplit}{\,\tikz[baseline=1,scale=0.08]{\draw[very thick,tomato] (0,3) to (1,1.5); \draw[very thick,tomato] (2,3) to (1,1.5);\draw[very thick,tomato] (1,1.5) to (1,0);}\,}
\newcommand{\todotdown}{\,\tikz[baseline=0.5,scale=0.08]{\draw[very thick,tomato] (0,0.7) -- ++(0,2); \node[rectangle,fill,inner sep=1.35pt,tomato] at (0,0.7) {};}\,}
\newcommand{\todotup}{\,\tikz[baseline=0.5,scale=0.08]{\draw[very thick,tomato] (0,1.7) -- ++(0,-2); \node[rectangle,fill,inner sep=1.35pt,tomato] at (0,1.7) {};}\,}
\newcommand{\tobarb}{\,\tikz[baseline=-.5,scale=0.08]{\draw[very thick,tomato] (0,1.7) -- ++(0,-2); \node[rectangle,fill,inner sep=1.35pt,tomato] at (0,1.7) {}; \node[rectangle,fill,inner sep=1.35pt,tomato] at (0,-.3) {};}\,}
\theoremstyle{definition}
\newtheorem{theoremm}{Theorem}[section]
\declaretheorem[style=definition,name=Theorem,qed=$\square$,numberlike=theoremm]{theorem}
\declaretheorem[style=definition,name=Lemma,qed=$\square$,numberlike=theoremm]{lemma}
\declaretheorem[style=definition,name=Proposition,qed=$\square$,numberlike=theoremm]{proposition}
\declaretheorem[style=definition,name=Example,qed=$\blacktriangle$,numberlike=theorem]{example}
\declaretheorem[style=definition,name=Definition,qed=$\blacktriangle$,numberlike=theorem]{definition}
\declaretheorem[style=definition,name=Remark,qed=$\blacktriangle$,numberlike=theorem]{remark}
\declaretheorem[style=definition,name=Definition,numberlike=theorem]{definitionn}
\declaretheorem[style=definition,name=Example,numberlike=theorem]{examplen}
\declaretheorem[style=definition,name=Remark,numberlike=theorem]{remarkn}
\declaretheorem[style=definition,name=Corollary,numberlike=theorem]{corollaryn}
\declaretheorem[style=definition,name=Lemma,numberlike=theoremm]{lemman}
\declaretheorem[style=definition,name=Proposition,numberlike=theoremm]{propositionn}
\declaretheorem[style=definition,name=Theorem,numberwithin=section]{theoremmain}
\def\notation#1#2#3{\rlap{\hyperref[#1]{{\color{orchid}#2}}}\hspace*{8.2mm} \hbox to 47mm{#3\hfill}}
\newcommand{\makeqed}{\hfill\ensuremath{\square}}
\newcommand{\makeqedtri}{\hfill\ensuremath{\blacktriangle}}
\newcommand{\qedmake}{\hfill\ensuremath{\blacksquare}}
\numberwithin{equation}{section}
\let\fullref\autoref
\def\makeautorefname#1#2{\expandafter\def\csname#1autorefname\endcsname{#2}}
\begin{document}
\vbadness=10001
\hbadness=10001
\renewcommand{\theequation}{\thesection.\arabic{equation}}
\title[Two-color Soergel calculus and simple transitive \texorpdfstring{$2$}{2}-representations]{Two-color Soergel calculus and simple transitive \texorpdfstring{$2$}{2}-representations}
\author[M. Mackaay and D. Tubbenhauer]{Marco Mackaay and Daniel Tubbenhauer}

\address{M.M.: Center for Mathematical Analysis, Geometry, and Dynamical Systems,
Departamento de Matem\'{a}tica, Instituto Superior T\'{e}cnico, 1049-001 Lisboa, Portugal \& Departamento de Matem\'{a}tica, FCT, Universidade do Algarve, Campus de Gambelas, 8005-139 Faro, Portugal}
\email{mmackaay@ualg.pt}

\address{D.T.: Institut f\"ur Mathematik, Universit\"at Z\"urich, Winterthurerstrasse 190, Campus Irchel, Office Y27J32, CH-8057 Z\"urich, Switzerland, \href{www.dtubbenhauer.com}{www.dtubbenhauer.com}}
\email{daniel.tubbenhauer@math.uzh.ch}

\begin{abstract}
In this paper we complete the $\ADE$-like
classification 
of simple transitive $2$-representations 
of Soergel bimodules
in finite dihedral type, under the assumption of gradeability. In particular, we use bipartite 
graphs and zigzag algebras of $\ADE$ type to give an explicit construction of a graded (non-strict) 
version of all these $2$-representations.

Moreover, 
we give simple combinatorial 
criteria for when two such $2$-representations are 
equivalent and for when their Grothendieck groups 
give rise to isomorphic representations. 

Finally, our construction 
also gives a large class of simple transitive $2$-representations 
in infinite dihedral type for general bipartite graphs.
%
%
%
\end{abstract}

\maketitle
\tableofcontents
%
%
\section{Introduction}\label{sec:intro}
An essential problem in classical representation 
theory is the classification of the simple representations 
of any given algebra, i.e. the parametrization of their 
isomorphism classes and the explicit construction of a 
representative of each class.  

In $2$-representation theory, the actions of algebras on 
vector spaces are replaced by functorial actions of 
$2$-categories on certain additive or abelian $2$-categories. 
The Grothendieck group of a $2$-representation is a classical 
representation. One can say that the 
$2$-representation decategorifies 
to the classical representation, or that the latter is 
a decategorification of the former. Vice versa, one can also 
say that the $2$-representation categorifies the classical 
representation to which it decategorifies, or 
that it is a categorification. Note that, in general, categorifications need 
not be unique.

Examples are $2$-representations of the $2$-categories which categorify representations of quantum groups, due to (Chuang--)Rouquier 
and Khovanov--Lauda, and $2$-representations of the $2$-category 
of Soergel bimodules, which categorify representations of Hecke algebras. 

Mazorchuk--Miemietz \cite{MM5} defined 
an appropriate 
$2$-categorical analogue of the simple representations 
of finite-dimensional algebras, which they called 
simple transitive $2$-representations (of 
finitary $2$-categories). The problem is that their classification is very hard and not well 
understood in general -- except when certain specific conditions are satisfied, 
as for Soergel bimodules in type $\typeA$ \cite[Theorem 21]{MM5} for example.

The authors of \cite{KMMZ} studied the 
so-called small quotient of Soergel bimodules and their 
simple transitive $2$-representations, for all finite Coxeter types. 
These $2$-representations are given by categories  
on which the bimodules act by endofunctors and the 
bimodule maps by natural transformations. Each of these categories 
is equivalent to the (projective or abelian) module 
category over the path algebra of a finite quiver, which 
can be obtained by doubling a certain Dynkin diagram. An almost complete 
classification was given in \cite{KMMZ}, which we now recall.

In every finite Coxeter type of rank strictly greater than 
two, all the simple transitive $2$-representations are 
equivalent to Mazorchuk and Miemietz's categorification of the 
cell representations of Hecke algebras,
the so-called cell $2$-representations.  

The rank two case is more delicate. In type $\dihedral{n}$, for any $n\in\Z_{>1}$, there 
is one cell $2$-representation of rank one and two higher rank cell $2$-representations, which 
correspond to the two possible bipartitions of the Dynkin diagram 
of type $\typea{n{-}1}$. (Here we distinguish a bipartition of 
a given graph from the opposite bipartition.) When $n$ is odd, 
these exhaust all simple transitive $2$-representations, up to 
equivalence. 

When $n=2,4$, it was already known that the 
same holds, see \cite{Zi1}. However, when $n$ is even and greater than four, 
it was shown in \cite{KMMZ} that there exist additional simple 
transitive $2$-representations which are {\em not} equivalent to cell 
$2$-representations. If one has $n\not\in\{12, 18, 30\}$, then there exist exactly 
two which correspond to the two possible bipartitions of a 
type $\typed{\text{\tiny$\frac{n}{2}{+}1$}}$ Dynkin diagram. 

For $n=12,18,30$, 
the possible existence of one more additional 
pair of inequivalent simple transitive $2$-representations 
was discovered, but not proved (not even conjectured) in \cite{KMMZ}. 
Should they exist, their underlying Dynkin diagrams were 
shown to be of type $\typee{6}$ for $n=12$, of type $\typee{7}$ for $n=18$, 
and of type $\typee{8}$ for $n=30$. 

The simple transitive $2$-representations with quivers of type $\typeA$ and 
$\typeD$ were constructed intrinsically in \cite{KMMZ}.
The ones of type $\typeA$ are equivalent to the aforementioned 
higher rank cell $2$-re\-presen\-tations, due to 
Mazorchuk--Miemietz \cite{MM1}, and can 
be constructed as subquotients of the $2$-category 
of Soergel bimodules, as in any finite Coxeter type. The 
type $\typeD$ simple transitive $2$-representations of 
$\dihedral{n}$, for $n>5$ even, were constructed in \cite{KMMZ} using an 
involution on the cell $2$-representations, 
mimicking a construction in \cite{MM}. Both 
constructions do not take into account the $\Z$-grading 
of the Soergel bimodules, but the $2$-representations 
admit a unique compatible grading.   

In this paper, we construct {\em all} graded simple transitive $2$-representations 
of the small quotient of the Soergel bimodules of type $\dihedral{n}$ 
by {\em different} means. We use Elias' \cite{El1} 
diagrammatic version of the latter $2$-category, 
the so-called two-color Soergel calculus. More precisely, given a Dynkin 
diagram of type $\typeA,\typeD$ or $\typeE$ with a bipartition, we 
define two degree-preserving, self-adjoint endofunctors $\functors$ and $\functort$ on the module category 
over the corresponding quiver -- which is a zigzag algebra 
of type $\ADE$ -- and, for each generating diagram in 
the two-color Soergel calculus, a natural transformation 
between composites of them such that all diagrammatic relations are preserved. 

Moreover, we show that 
two graded simple transitive $2$-representations are equivalent 
if and only if the corresponding bipartite graphs are 
isomorphic (as bipartite graphs). Finally, 
we also determine when graded simple transitive $2$-representations 
decategorify to isomorphic representations of the corresponding Hecke 
algebra, using a purely graph-theoretic property. 

Let us give some interesting 
consequences of our results. First,  
there are two inequivalent graded simple transitive $2$-representations 
of type $\typee{6}$ (or $\typee{8}$) for the Soergel bimodules
of type $\dihedral{12}$ (or $\dihedral{30}$), 
which decategorify to isomorphic representations of the associated
Hecke algebra. (The two simple transitive $2$-representations of type $\typee{7}$ 
for the type $\dihedral{18}$ Soergel bimodules have non-isomorphic decategorifications.) 
To the best of our 
knowledge these are the first examples of simple transitive $2$-representations of the 
{\em same} $2$-category which decategorify 
to isomorphic representations. 

Our construction also gives 
graded simple transitive $2$-representations 
of the $2$-ca\-tegory defined by the two-color Soergel calculus 
in type $\dihedral{\infty}$ for any 
bipartite graph, not just the ones of $\ADE$ type. (Note that this $2$-category is not always isomorphic to the $2$-category of Soergel bimodules 
in type $\dihedral{\infty}$, cf. \fullref{remark-gg-goes-wrong}.) For these 
$2$-representations, all of the above 
statements are still valid.

\subsubsection*{Potential further developments}\label{subsub:applications}

We hope that our construction 
will also be helpful for 
the construction of simple 
transitive $2$-representations of other $2$-categories, e.g. 
Soergel bimodules in  
other finite Coxeter types with more $2$-cells. 

Furthermore, since our preprint first appeared, we wrote a joint paper 
with Mazorchuk--Miemietz \cite{MMMT} in which we explain the relation 
between the simple transitive $2$-representations of the Soergel bimodules in  
finite dihedral type and those of the 
semisimplified subquotient of the module category of
quantum $\mathfrak{sl}_2$ at a root of unity, due to 
Kirillov--Ostrik \cite{KO1}, \cite{Os1} and others. 
This relation is based on Elias' algebraic quantum Satake 
equivalence \cite{El1}, \cite{El2}. There should be a similar 
story for $\mathfrak{sl}_{>2}$, and we hope 
that our paper will help to develop it. 
(See \cite{MMMT2} for some first steps in this
direction.)

And most of our constructions work over 
certain integral rings, see \eqref{eq:aform}.
So perhaps our work will also be useful for $2$-representation theory in 
finite characteristic.

\subsubsection*{Structure of the paper}\label{subsub:paper-sections}

\begin{enumerate}[label=(\roman*)]

\setlength\itemsep{.15cm}

\item In \fullref{sec:basics} we give the details of our graph-theoretical 
construction.

\item In \fullref{sec:2reps} we explain some notions 
concerning $2$-categories and $2$-representations, focusing on 
the graded and weak setups 
(by extending Mazorchuk and Miemietz's
constructions).

\item In \fullref{sec:dicat} we recall Elias' two-color Soergel calculus 
and define our $2$-action of it. Hereby we follow 
-- and generalize -- ideas from \cite{AT1} and \cite{KS1}.

\item Finally, \fullref{sec:proofs} contains all proofs.

\end{enumerate}

\begin{remarkn}\label{remark:colors}
We use colors in this paper, 
but they are not necessary to read the paper 
and just a service to the reader. 
Nevertheless, three colors should be mentioned, i.e. 
{\color{sea}sea-green} and {\color{tomato}tomato} 
denote the two different generators 
$\dihsgen$ and $\dihtgen$ of the dihedral 
groups, while {\color{orchid} dark orchid} 
denotes notions which play the role of a dummy and can be replaced 
by either of the two. Moreover, our notation is designed so that 
these colors can be distinguished 
in black-and-white:
\smallskip
\begin{enumerate}[label=(\roman*)]

\setlength\itemsep{.15cm}

\item The color {\color{sea}sea-green} is 
always either accompanied by the 
symbol $\dihsgen$, the associated notions are underlined 
or we use $\bullets$.

\item The color {\color{tomato}tomato} is 
always either accompanied by the 
symbol $\dihtgen$, the associated notions are overlined 
or we use $\bullett$.

\item The color {\color{orchid} dark orchid} is 
always either accompanied by the 
symbols $\dihugen$, $\dihvgen$ or $\dihwgen$, the 
associated notions 
are neither 
under- nor overlined 
or we use $\bulletst$.\makeqedtri

\end{enumerate}
\end{remarkn}
\noindent \textbf{Acknowledgements.} We especially 
like to thank Ben Elias, Nick Gurski and Volodymyr Mazorchuk for patiently 
answering our questions, and Geordie Williamson for a very fruitful blackboard discussion.

We also thank Nils Carqueville, Michael Ehrig, Lars Thorge Jensen, 
Vanessa Miemietz, SageMath, Antonio Sartori and Paul Wedrich for 
helpful comments and discussions, as well as Volodymyr Mazorchuk 
for many detailed and very helpful comments on a draft of this paper. 
Special thanks to the anonymous referee for a careful reading of this paper,
for very helpful suggestions, for 
pointing out an error in the first
version of this paper, and for improving the ``grammar of the world''.

M.M. likes to thank the Hausdorff Center for Mathematics in Bonn 
for sponsoring a research visit. D.T. thanks the 2016 European Championship 
for providing a quiet working atmosphere in the mathematical institute during which 
a big part of his work on this paper was done. 
M. M. is partially supported by FCT/Portugal
through the project UID/MAT/04459/2013.
\section{Bipartite graphs and dihedral \texorpdfstring{$2$}{2}-representations}\label{sec:basics}
In this section we state our main results 
(see \fullref{subsec:main-theorems}), and provide the background needed to understand them.
\subsection{Combinatorics of dihedral groups}\label{subsec:dihedral-stuff}

First, we recall some basic 
notions concerning the dihedral groups.

\subsubsection{The dihedral group and its associated Hecke algebra}\label{subsub:hecke-stuff}
 
We follow \cite{El1} with our conventions. Let
\begin{gather*}
\mathrm{W}_n=\langle\dihsgen,\dihtgen|\dihsgen^2=\dihtgen^2=1,
\dihsgen_n=\underbrace{\dots\dihsgen\dihtgen\dihsgen}_n=
\underbrace{\dots\dihtgen\dihsgen\dihtgen}_n=\dihtgen_n
\rangle\quad\text{and}\quad
\mathrm{W}_{\infty}=\langle\dihsgen,\dihtgen|\dihsgen^2=\dihtgen^2=1
\rangle
\end{gather*}
be the \textit{dihedral groups} of order $2n\in\Z_{>0}$ and of infinite order 
respectively, presented as the rank two Coxeter groups of type $\dihedral{n}$ and $\dihedral{\infty}$.
When no confusion is possible, we write $\mathrm{W}$ for either 
$\mathrm{W}_n$ or $\mathrm{W}_{\infty}$. The two generators $\dihsgen$ and $\dihtgen$ 
are always {\color{sea}sea-green s} and {\color{tomato}tomato t} colored. 
(Throughout, we allow $n=1$ which is to be understood 
by dropping one color, say 
{\color{tomato}tomato t}, and all notions involving it.)

Here and in the following, we denote by $\dihsgen_k$ 
respectively by $\dihtgen_k$ a sequence of length $k$, alternating in 
$\dihsgen$ and $\dihtgen$ with rightmost symbol $\dihsgen$ respectively 
$\dihtgen$. 
In general, we write an element of $\mathrm{W}$ as a finite word $\word=\word_l\cdots \word_1$ with $\word_k\in\{\dihsgen,\dihtgen\}$ (including the empty word $\emptyset$). We say that the word is reduced if it is equal to $\dihsgen_k$ or $\dihtgen_k$ for some $k\in\Z_{\geq 0}$. 
Throughout the remainder of this paper, when we write $\word\in\mathrm{W}$ as a word, we always assume that the word is reduced if not stated otherwise. 

Moreover, let $\mathrm{H}_n$ or $\mathrm{H}_{\infty}$ denote the associated \textit{Hecke algebra} over $\Cv$, with $\vpar$ being an indeterminate. Again, when no confusion is possible, we write $\mathrm{H}$ for $\mathrm{H}_n$ or $\mathrm{H}_\infty$. This algebra 
has generators $T_{\dihsgen}$ and $T_{\dihtgen}$, and relations
\[
T_{\dihsgen}^2=(\vpar^{-2}-1)\cdot T_{\dihsgen}+\vpar^{-2},\quad\quad
T_{\dihtgen}^2=(\vpar^{-2}-1)\cdot T_{\dihtgen}+\vpar^{-2},\quad\quad
T_{\dihsgen_n}=T_{\dihtgen_n}.
\]
For any $\word\in\mathrm{W}$, pick a reduced word $\word_l\cdots \word_1$ which represents it. Then the element $T_\word\in\mathrm{H}$ denotes the product $T_{\word_l}\cdots T_{\word_1}$ and $T_{\emptyset}=1$. The element 
$T_\word$ does not depend on the choice of the reduced word 
and $\{T_\word\mid \word\in\mathrm{W}\}$ forms a basis of $\mathrm{H}$. So 
for $\vpar=1$ (when working over $\CV$) we recover $\C[\mathrm{W}]$.

\subsubsection{The dihedral Kazhdan--Lusztig basis}\label{subsub:KL-stuff}

Denote by $\ell(\word)$ the length of $\word\in\mathrm{W}$ and by 
$\leq$ the Bruhat order on $\mathrm{W}$. For any $\word\in\mathrm{W}$ define
\[
\klbasis_{\word}=\vpar^{\ell(\word)}\cdot{\textstyle\sum_{\word^{\prime}\leq\word}}
T_{\word^{\prime}},\quad \word,\word^{\prime}\in\mathrm{W}.
\]
The set $\{\klbasis_{\word}\mid\word\in\mathrm{W}\}$ forms a basis 
of $\mathrm{H}$, called the \textit{Kazhdan--Lusztig basis}. 

For an expression $\word=
\word_{l}\cdots\word_1$
for $\word\in\mathrm{W}$ -- not necessarily 
reduced -- let $\klbasis_{\overline{\word}}=
\klbasis_{\word_{l}}\cdots\klbasis_{\word_1}$. This element does depend 
on the choice of expression for $\word$, even amongst reduced expressions. 
Choosing one reduced expression for each $\word\in\mathrm{W}$, we get another 
basis $\{\klbasis_{\overline{\word}}=
\klbasis_{\word_{\ell(\word)}}\cdots\klbasis_{\word_1}\mid \word\in\mathrm{W}\}$ 
for $\mathrm{H}$, called the \textit{Bott--Samelson basis}, cf. \fullref{subsec:diacat}. 
(Note that we write $\klbasis_{\overline{\word}}$ to distinguish the Bott--Samelson from 
the Kazhdan--Lusztig basis.)

\begin{example}\label{example:KLbasis}
One has $\disgen=\vpar(T_{\dihsgen}+1)$, $\ditgen=\vpar(T_{\dihtgen}+1)$, 
and an easy calculation gives 
$\klbasis_{\dihsgen\dihtgen\dihsgen}=\klbasis_{\overline{\dihsgen\dihtgen\dihsgen}}-\disgen
=\disgen\ditgen\disgen-\disgen$. In general, $\klbasis_{\word}=\klbasis_{\overline{\word}} \mp \text{``lower order terms''}$.
\end{example}

For any $n\in\Z_{>0}$, the group $\mathrm{W}_n$ has a unique longest element
\[
\wnull(n)=\wnull=\dihsgen_n=\dihtgen_n.
\]
(In case ``$n=\infty$'' there is no such $\wnull$.) 
In this paper, we only consider categorifications of $\mathrm{H}_n$-representations which are killed by $\klbasis_{\wnull(n)}=\klbasis_{\wnull}$. In fact, the only decategorification of a simple 
transitive $2$-representation of $\mathrm{H}_n$ 
which is not killed by $\klbasis_{\wnull}$ is, 
by \cite[Theorem 18]{MM5}, the trivial 
$\mathrm{H}_n$-representation, cf. \fullref{theorem:classification}.

\subsubsection{The defining relations satisfied by the Kazhdan--Lusztig basis elements}\label{subsub:KL-stuff-genrel}

Define 
recursively the integers $d^k_{l}$ via:
\begin{gather}\label{eq:numbers}
\begin{aligned}
d_1^1=1,\quad d^k_{l}=0,&\quad\text{ unless }0<k\leq l\text{ and }l-k\text{ is even},\\\
d^k_{l}&=d_{l-1}^{k-1}-d_{l-2}^{k}.
\end{aligned}
\end{gather}
Let $[2]_{\vpar}=\vpar+\vpar^{-1}$. When considering the basis $\{\klbasis_{\overline{\word}}\mid \word\in\mathrm{W}\}$, the 
defining relations of $\mathrm{H}$ -- by e.g. \cite[Section 2.2]{El1} -- are
\begin{gather}\label{eq:def-relationsa}
\disgen\disgen=[2]_{\vpar}\cdot\disgen,\quad\quad
\ditgen\ditgen=[2]_{\vpar}\cdot\ditgen,
\\
\label{eq:def-relations}
{\textstyle\sum_{k\in\Z_{\geq 0}}}\, d^k_n\cdot 
\klbasis_{\overline{\dihsgen_k}}={\textstyle\sum_{k\in\Z_{\geq 0}}}\, d^k_n\cdot \klbasis_{\overline{\dihtgen_k}}.
\end{gather}
Note that either sum in \eqref{eq:def-relations} is equal to $\klbasis_{\wnull}$. 

\begin{example}\label{example:reorder}
The first few non-zero numbers from \eqref{eq:numbers} 
are as follows.
\[
\begin{tikzpicture}[anchorbase, scale=.45]
	\draw [thick] (.5,.5) to (.5,-7.5);
	\draw [thick] (-.5,-.5) to (7.5,-.5);
	\node at (-1,-4) {$l$};
	\node at (4,1) {$k$};
	\node at (0,-1) {$1$};
	\node at (0,-2) {$2$};
	\node at (0,-3) {$3$};
	\node at (0,-4) {$4$};
	\node at (0,-5) {$5$};
	\node at (0,-6) {$6$};
	\node at (0,-6.75) {$\vdots$};
	\node at (1,0) {$1$};
	\node at (2,0) {$2$};
	\node at (3,0) {$3$};
	\node at (4,0) {$4$};
	\node at (5,0) {$5$};
	\node at (6,0) {$6$};
	\node at (7,0) {$\,\cdots$};
	\node at (1,-1) {$1$};
	\node at (2,-2) {$1$};
	\node at (3,-3) {$1$};
	\node at (1,-3) {$-1$};
	\node at (4,-4) {$1$};
	\node at (2,-4) {$-2$};
	\node at (5,-5) {$1$};
	\node at (3,-5) {$-3$};
	\node at (1,-5) {$1$};
	\node at (6,-6) {$1$};
	\node at (4,-6) {$-4$};
	\node at (2,-6) {$3$};
	\node at (7,-6.75) {$\ddots$};
	\node at (5,-6.75) {$\ddots$};
	\node at (3,-6.75) {$\ddots$};
	\draw [thick, rotate around={315:(1.65,-4.5)}, dotted] (1.65,-4.5) ellipse (.55cm and 1.35cm);
	\draw [thick, dotted, ->] (2,-4.9) to (2,-5.7);
\end{tikzpicture}.
\]
Hence, if $n=l=3$, then \eqref{eq:def-relations} gives
\begin{gather}\label{eq:n-is-three}
\disgen\ditgen\disgen-
\disgen
=
\ditgen\disgen\ditgen-
\ditgen\;(=\klbasis_{\wnull}).
\end{gather}
When ``$n=\infty$'', the only relations are the ones from \eqref{eq:def-relationsa}.
\end{example}
\subsection{Bipartite graphs}\label{subsec:bipartite-stuff}

Next, we recall some basics about bipartite 
graphs and fix notation which we use throughout.

\subsubsection{A reminder on bipartite graphs}\label{subsub:basic-graphs}

Let $\g$ be a connected, unoriented, finite graph without loops 
and with at most one edge between each pair of vertices.
Let $V=V(\g)$ be the set of vertices of $\g$ and assume that the vertices are numbered. If these numbers are divided into 
two disjoint subsets $I$ and $J$, such that     
\[
V=
\graphs
\,{\textstyle\coprod}\,
\grapht,
\quad\quad
\graphs=\{\bbii{i}\mid i\in I\},
\quad\quad
\grapht=\{\bbjj{j}\mid j\in J\},
\]
with no edges connecting vertices within each set, then we call the triple $(\g,\graphs,\grapht)$ a {\em bipartite graph}. When no confusion is possible, we simply write $\g$ for a bipartite graph. Note that a bipartition of $\g$ is the same as a two-coloring of its vertices.

Two bipartite graphs $(\g,\graphs,\grapht)$ and $(\g^{\prime},\graphs^{\prime},\grapht^{\prime})$ 
are called \textit{isomorphic} if there is an isomorphism of graphs between $\g$ and $\g^{\prime}$ which sends $\graphs$ to $\graphs^{\prime}$ and $\grapht$ to $\grapht^{\prime}$.

\begin{example}\label{example-bigraph}
One crucial example in this paper is the type $\typee{6}$ graph:
\[
\g=
\begin{tikzpicture}[anchorbase, scale=1]
	\draw [thick] (0,0) to (4,0);
	\draw [thick] (2,0) to (2,1);
	\node at (0,-.01) {\Large $\bullets$};
	\node at (1,-.01) {\Large $\bullett$};
	\node at (2,-.01) {\Large $\bullets$};
	\node at (3,-.01) {\Large $\bullett$};
	\node at (4,-.01) {\Large $\bullets$};
	\node at (2,.99) {\Large $\bullett$};
\end{tikzpicture}
,\quad\quad
\g^{\prime}=
\begin{tikzpicture}[anchorbase, scale=1]
	\draw [thick] (0,0) to (4,0);
	\draw [thick] (2,0) to (2,1);
	\node at (0,0) {\Large $\bullett$};
	\node at (1,0) {\Large $\bullets$};
	\node at (2,0) {\Large $\bullett$};
	\node at (3,0) {\Large $\bullets$};
	\node at (4,0) {\Large $\bullett$};
	\node at (2,1) {\Large $\bullets$};
\end{tikzpicture}.
\]
These two-colorings give non-isomorphic bipartite 
graphs of type $\typee{6}$. As we will see later, these will give rise to 
two inequivalent $2$-representations categorifying the same 
$\mathrm{H}$-module, see \fullref{example:SVD} 
(keeping \fullref{example-scequal} in mind).
\end{example}

We write $\bbii{i}\gconnect{}\bbjj{j}(=\bbjj{j}\gconnect{}\bbii{i})$ in case 
$\bbii{i}$ and $\bbjj{j}$ are connected in $\g$.

\subsubsection{Adjacency matrices and spectra}\label{subsub:basic-graphs-spectra}

Given any graph $\g$, the 
{\em adjacency matrix} $A(\g)$ of 
$\g$ is the symmetric $|V|\times|V|$-matrix 
whose only non-zero entries are $A(\g)_{\bbii{i},\bbjj{j}}=1$ for 
$\bbii{i}\gconnect{}\bbjj{j}$.
The {\em spectrum} $S_{\g}$ of $\g$ is the multiset of all eigenvalues of $A(\g)$ 
(which are all real), repeating each one of them according to its multiplicity. 

Since we assume $\g$ to be bipartite, we can clearly choose an ordering of the vertices 
(which we will always do from now on) such that
\begin{gather}\label{eq:isadmatrix}
A(\g)=
\begin{pmatrix}
0 & A\\
A^{\mathrm{T}} & 0
\end{pmatrix}
\end{gather}
for some matrix $A$ of size $|\graphs|\times|\grapht|$. 
Next, recall 
that $S_{\g}$ is a 
symmetric set (see e.g. \cite[Proposition 3.4.1]{BH1}), and
the fact from linear algebra that
\begin{gather}\label{eq:eigenvalue}
AA^{\mathrm{T}}\text{ has an eigenvalue }\alpha\neq 0
\;\Leftrightarrow\;
A^{\mathrm{T}}A\text{ has an eigenvalue }\alpha\neq 0.
\end{gather}
Thus, the non-zero elements (counting multiplicities) of $S_{\g}$ are $\pm\sqrt{\alpha}$ 
for $\alpha$ as in \eqref{eq:eigenvalue}.

\subsubsection{Spectrum-color-equivalence}\label{subsub:basic-graphs-equi}

Given two bipartite graphs $\g$ and $\g^{\prime}$. 
We call them
{\em spectrum-color-equivalent} 
if $\vert \graphs\vert =\vert \graphs^{\prime}\vert $, $\vert \grapht \vert =\vert \grapht^{\prime}\vert $ and $S_{\g}=S_{\g^{\prime}}$, and {\em spec\-trum-color-inequivalent} otherwise. This clearly gives rise to an equivalence relation.

\begin{example}\label{example-scequal}
Take the two graphs $\g$ and $\g^{\prime}$ from \fullref{example-bigraph}. 
Then $\g$ and $\g^{\prime}$ are non-isomorphic as bipartite graphs, but they 
are spectrum-color-equivalent. 

More generally, any 
bipartite graph with $\vert\graphs\vert=\vert\grapht\vert$ is 
spectrum-color-equivalent 
to the bipartite graph with the opposite two-coloring.
\end{example}
\subsection{Quivers and categorical representations of dihedral groups}\label{subsec:quiver-stuff}

Fix a bipartite graph $\g$. The {\em double quiver} $\qalg_{\g}$ associated to $\g$ is the oriented 
graph obtained from $\g$ by doubling each edge and giving opposite orientations 
to the two resulting edges, called arrows. Such an arrow is denoted by $\bbjj{j}|\bbii{i}$ if it starts at 
$\bbii{i}$ and ends at $\bbjj{j}$, and by $\bbii{i}|\bbjj{j}$ if 
it starts at 
$\bbjj{j}$ and ends at $\bbii{i}$ (i.e. we are using the 
``operator notation'').

\begin{definition}\label{definition-partner}
Two distinct arrows of $\qalg_{\g}$ 
are called 
\textit{partners} if they come from the same edge in $\g$.
\end{definition}

Thus, each arrow  
has precisely one partner pointing in the opposite 
direction.

\begin{example}\label{example-double}
An example of a double-quiver is:
\[
\g=
\begin{tikzpicture}[anchorbase, scale=1]
	\draw [thick] (.15,0) to (.85,0);
	\draw [thick] (1.15,0) to (1.85,0);
	\node at (0,0) {$\bbi{1}$};
	\node at (1,0) {$\bbj{2}$};
	\node at (2,0) {$\bbi{3}$};
\end{tikzpicture}
\;
\rightsquigarrow
\;
\qalg_{\g}=
\begin{tikzpicture}[anchorbase, scale=1]
	\draw [->] (.15,.075) to (.85,.075);
	\draw [<-] (.15,-.075) to (.85,-.075);
	\draw [->] (1.15,.075) to (1.85,.075);
	\draw [<-] (1.15,-.075) to (1.85,-.075);
	\node at (0,0) {$\bbi{1}$};
	\node at (1,0) {$\bbj{2}$};
	\node at (2,0) {$\bbi{3}$};
\end{tikzpicture}.
\]
In the above example, $\bbjj{2}|\bbii{1}$ and $\bbii{1}|\bbjj{2}$ are partners, and so 
are $\bbii{3}|\bbjj{2}$ and $\bbjj{2}|\bbii{3}$.
\end{example}

As already pointed out, we will see that 
only bipartite graphs of $\ADE$ type give rise to 
$2$-representations of Soergel bimodules in finite dihedral type.

\subsubsection{The path algebra associated to a bipartite graph}\label{subsub:path-alg}

We work over certain base rings $\Zq$, $\ZQ$ or $\zgl$, all of which 
are subrings of $\C$. 
We will impose some technical conditions on these 
which we discuss in \fullref{remark:ground-ring},
and the reader should think of these 
as playing the role of an integral form.
 
If no confusion can arise, we 
simply write $\zq$ for either $\Zq$, $\ZQ$ or $\zgl$.
Of course, we can always extend the scalars to $\C$ 
if necessary.

Let $\pathG$ be the path algebra of $\qalg_{\g}$ over $\zq$, such that 
multiplication on the left is given by post-composition, on  
the right by pre-composition. We denote the multiplication by $\pathm$ and consider 
$\pathG$ to be graded by the path length.

By convention, 
$\bbii{i}$ and $\bbjj{j}$ denote the corresponding paths of length zero. Paths of length one are in one-to-one correspondence 
with arrows of $\qalg_{\g}$, and we call them arrows too.

\begin{definition}\label{definition:quiverG}
Let $\algG$ denote the quotient algebra obtained from 
$\pathG$ by the following relations.
\smallskip
\begin{enumerate}[label=$\vartriangleright$]

\setlength\itemsep{.15cm}

\item \textbf{The two and three steps relations.}
\renewcommand{\theequation}{\arabic{section}.QG1}
\begin{gather}\label{eq:two-steps}
\text{The composite of two arrows is zero unless they are partners.}
\end{gather}
\renewcommand{\theequation}{\arabic{section}.QG2}
\begin{gather}\label{eq:three-steps}
\text{The composite of three arrows is zero.}
\end{gather}

\item \textbf{All non-zero partner composites are equal.} 
Assume that $\bbii{i}\gconnect{}\bbjj{j}_a$, 
for $a=1,\dots,b$. Then:
\renewcommand{\theequation}{\arabic{section}.QG3}
\begin{gather}\label{eq:partner}
\bbii{i}|\bbjj{j}_1\pathm\bbjj{j}_1|\bbii{i}
=
\dots
=
\bbii{i}|\bbjj{j}_b\pathm\bbjj{j}_b|\bbii{i}
=
\bbii{i}|\bbii{i}
,
\end{gather}
\renewcommand{\theequation}{\arabic{section}-\arabic{equation}}
where the path $\bbii{i}|\bbii{i}$, called \textit{loop}, is defined 
by the above equations.

Similar relations hold with $\bbii{i}$ and $\bbjj{j}$ swapped. 
\end{enumerate}
\smallskip
(Note that the relation \eqref{eq:three-steps} is a consequence 
of \eqref{eq:two-steps} and \eqref{eq:partner} as 
long as $\g$ has two or more edges.)
We call $\algG$ the {\em type $\g$-quiver algebra}.
\end{definition}

The defining relations of $\algG$ are homogeneous, so $\algG$ inherits the path length grading. 
Clearly, the primitive idempotents of $\algG$ are the $\bbk{i}$'s.

\begin{example}\label{example:typeAquiver}
The relations \eqref{eq:two-steps}, \eqref{eq:three-steps} 
and \eqref{eq:partner} might be familiar 
to readers who know about the so-called zigzag algebras 
in the spirit of \cite{HK1}. 

Let us give two examples. Fix $m\in\Z_{\geq 0}$. Let $\g$ be a Dynkin graph of type 
$\typea{m}$ or $\typeat{2m-1}$ with $m$, respectively $2m$ vertices. 
Then the associated $\qalg_{\g}$'s are of the following form 
(if we consider the evident two-coloring of $\g$).
\begin{gather*}
\begin{tikzpicture}[anchorbase, scale=1]
	\draw [->] (.15,.075) to (.85,.075);
	\draw [->] (.85,-.075) to (.15,-.075);
	\draw [->] (1.15,.075) to (1.85,.075);
	\draw [->] (1.85,-.075) to (1.15,-.075);
	\draw [->] (2.15,.075) to (2.85,.075);
	\draw [->] (2.85,-.075) to (2.15,-.075);
	\draw [->] (3.35,.075) to (4.05,.075);
	\draw [->] (4.05,-.075) to (3.35,-.075);
	\draw [->] (4.925,.075) to (5.625,.075);
	\draw [->] (5.625,-.075) to (4.925,-.075);
	\draw [->] (6.45,.075) to (7.15,.075);
	\draw [->] (7.15,-.075) to (6.45,-.075);
	\node at (0,0) {$\bbi{1}$};
	\node at (1,0) {$\bbj{2}$};
	\node at (2,0) {$\bbi{3}$};
	\node at (3.1,0) {$\,\cdots$};
	\node at (4.5,0) {$\bbk{m{-}2}$};
	\node at (6.05,0) {$\bbk{m{-}1}$};
	\node at (7.35,0) {$\bbk{m}$};
\end{tikzpicture}
,\quad\quad
\xy 0;/r.25pc/:
(-10,0)*+{\bbi{0}}="1";
(-5,8.66)*+{\bbj{1}}="2";
(5,8.66)*+{\bbi{2}}="3";
(10,0)*+{\bbj{3}}="4";
(5,-8.66)*+{\bbi{4}}="5";
(-5,-8.66)*+{\hspace*{-.3cm}\bbj{2m{-}1}}="6";
(0,-8.66)*+{\cdots}="7";
{\ar@<2pt>@{->} "1";"2" };
{\ar@<2pt>@{->} "2";"3" };
{\ar@<2pt>@{->} "3";"4" };
{\ar@<2pt>@{->} "4";"5" };
{\ar@<2pt>@{->} "1";"6" };
{\ar@<2pt>@{->} "2";"1" };
{\ar@<2pt>@{->} "3";"2" };
{\ar@<2pt>@{->} "4";"3" };
{\ar@<2pt>@{->} "5";"4" };
{\ar@<2pt>@{->} "6";"1" };
\endxy.
\end{gather*}
The defining relations of the associated 
quiver algebras are of the form
\begin{gather*}
\eqref{eq:two-steps}\colon\quad
\bbk{i{+}2}|\bbk{i{+}1}\pathm\bbk{i{+}1}|\bbk{i}
=0=
\bbk{i{-}2}|\bbk{i{-}1}\pathm\bbk{i{-}1}|\bbk{i},
\\
\eqref{eq:partner}\colon\quad
\bbk{i}|\bbk{i{+}1}\pathm\bbk{i{+}1}|\bbk{i}
=
\bbk{i}|\bbk{i}
=
\bbk{i}|\bbk{i{-}1}\pathm\bbk{i{-}1}|\bbk{i},
\end{gather*}
with indices modulo $2m$ in the cyclic case. 

The quiver algebra for the Dynkin graph of type $\typea{m}$ will be
of importance later on and we denote it by $\algA{m}$. Similarly, 
we denote by $\algAt{2m-1}$ its cyclic counterpart.
\end{example}

In the above and throughout; for $m=0$ and $m=1$ we let $\algG=\algA{0}=\zq$ 
and $\algG=\algA{1}\cong\zq[X]/(X^2)$, by convention.

\begin{remark}\label{remark:typeAquiver}
The algebras from \fullref{example:typeAquiver} 
also appear in the context of categorical braid 
group actions, e.g. in \cite{GTW} and \cite{KS1}. Note that Khovanov--Seidel 
have the additional relation $\bbii{1}|\bbjj{2}|\bbii{1}=0$.
\end{remark}

\subsubsection{Some bimodules}\label{subsub:path-alg-bimod}

Let us denote by $P_{\bbk{i}}$ and ${}_{\bbk{i}}P$ the 
left and the right ideal of $\algG$ generated 
by $\bbk{i}$, respectively. They are 
clearly graded $\algG$-modules. 

\begin{example}\label{example:projectives-via-arrows}
We can easily visualize 
them for $\algA{m}$ with $1<i<m$ as
\begin{gather}\label{eq:projectives}
\raisebox{-.35cm}{$P_{\bbk{i}}=
\raisebox{.35cm}{\begin{tikzpicture}[anchorbase, scale=1]
	\draw [->] (.85,-.075) to (.15,-.075);
	\draw [->] (1.15,.075) to (1.85,.075);
	\draw [directed=.825] (.85,.15) to [out=135, in=180] (1,.7) to [out=0, in=45] (1.15,.15);
	\fill [white] (1,.7) circle (.2cm);
	\node at (1,0) {$\bbk{i}$};
	\node at (1,.7) {\tiny $\loopy{\bbk{i}}$};
	\node at (1.5,-.125) {\tiny $\bbk{i{+}1}|\bbk{i}$};
	\node at (.5,.125) {\tiny $\bbk{i{-}1}|\bbk{i}$};
\end{tikzpicture}}
,\quad\quad
{}_{\bbk{i}}P=
\raisebox{.35cm}{\begin{tikzpicture}[anchorbase, scale=1]
	\draw [->] (.15,.075) to (.85,.075);
	\draw [->] (1.85,-.075) to (1.15,-.075);
	\draw [directed=.825] (.85,.15) to [out=135, in=180] (1,.7) to [out=0, in=45] (1.15,.15);	
	\fill [white] (1,.7) circle (.2cm);
	\node at (1,0) {$\bbk{i}$};
	\node at (1,.7) {\tiny $\loopy{\bbk{i}}$};
	\node at (1.55,.125) {\tiny $\bbk{i}|\bbk{i{+}1}$};
	\node at (.5,-.125) {\tiny $\bbk{i}|\bbk{i{-}1}$};
\end{tikzpicture}}$.}
\end{gather}
They have basis vectors 
$\bbk{i},\bbk{i{-}1}|\bbk{i},\bbk{i{+}1}|\bbk{i},\loopy{\bbk{i}}$ and $\bbk{i},\bbk{i}|\bbk{i{-}1},\bbk{i}|\bbk{i{+}1},\loopy{\bbk{i}}$, respectively, of degree $0,1,1,2$. So they are 
free of $\zq$-rank three for any  
$m\neq 0,1$.
\end{example}

Because the $\bbk{i}$ form a complete set of orthogonal primitive idempotents in $\algG$, the $P_{\bbk{i}}$ and the ${}_{\bbk{i}}P$ are graded, indecomposable projective modules and all graded, 
indecomposable left -- respectively right -- $\algG$-modules are of the form $P_{\bbk{i}}\{\shiftdeg\}$, 
respectively ${}_{\bbk{i}}P\{\shiftdeg\}$, for some shift $\shiftdeg\in\Z$.
Let $\otimes=\otimes_{\zq}$ be the 
tensor product over $\zq$. Then  
$P_{\bbk{i}}\{\shiftme{-1}\}\otimes {}_{\bbk{i}}P$ is 
clearly a graded $\algG$-bimodule. 
These bimodules will be important in this paper.

\subsubsection{Endofunctors associated to bipartite graphs}\label{subsub:endo-stuff}

We denote by $\cellcatG=\Modl{\algG}$ 
the category of graded projective,  
(left) $\algG$-modules, which are free of finite $\zq$-rank.
Our next goal, following \cite[Section 2]{KS1} and \cite[Section 3]{AT1}, 
is to define 
endofunctors 
\[
\bifunctor_{\bbk{i}}\colon\cellcatG\to\cellcatG,\quad i\in\g. 
\] 
Let $\qgtimes=\otimes_{\algG}$ be the 
tensor product over $\algG$. We define
\begin{gather*}
\bifunctor_{\bbk{i}}(X)=P_{\bbk{i}}\{\shiftme{-1}\}\otimes {}_{\bbk{i}}P\qgtimes X,\quad\quad
\bifunctor_{\bbk{i}}(f)=\idmap_{P_{\bbk{i}}\{\shiftme{-1}\}\otimes {}_{\bbk{i}}P}\qgtimes f.
\end{gather*}
Here $X,Y\in\cellcatG$ and $f\in\Hom_{\cellcatG}(X,Y)$.
(Since $P_{\bbk{i}}\{\shiftme{-1}\}\otimes {}_{\bbk{i}}P$ is clearly
{\em biprojective}, i.e. projective both as a left and as a right $\algG$-module, 
the functor $\bifunctor_{\bbk{i}}$ sends graded projectives to graded projectives.) As 
in \cite[Section 3.3]{AT1} we immediately obtain
\begin{gather}\label{eq:some-relations-infty}
\bifunctor_{\bbk{i}}(P_{\bbk{j}})\cong\begin{cases}P_{\bbk{i}}\{\shiftme{-1}\}\oplus P_{\bbk{i}}\{\shiftme{+1}\},&\text{if }\bbk{i}=\bbk{j},\\
P_{\bbk{i}},&\text{if }\bbk{i}\gconnect{}\bbk{j},\\
0,& \text{otherwise}.
\end{cases}
\end{gather}

\begin{example}\label{example:calc-functor}
By looking at \eqref{eq:projectives} one observes that 
${}_{\bbk{i}}P\qgtimes P_{\bbk{i}}\cong \zq(\bbk{i})\oplus\zq(\bbk{i}|\bbk{i})$. 
This iso\-morphism is given by 
multiplication of paths.
Consequently, the degree two $\algG$-en\-domorphism 
$\bifunctor_{\bbk{i}}(\bbk{i}|\bbk{i})$ of 
$P_{\bbk{i}}\{\shiftme{-1}\}\oplus P_{\bbk{i}}\{\shiftme{+1}\}$ sends 
the copy $P_{\bbk{i}}\{\shiftme{-1}\}$ identically to $P_{\bbk{i}}\{\shiftme{+1}\}$, 
and is zero elsewhere (recall that 
$\bbk{i}|\bbk{i}\pathm\bbk{i}|\bbk{i}=0$).
\end{example}

Following \cite[Section 3.3]{AT1} we define
\begin{gather*}
\functors={\textstyle\bigoplus_{\,\bbii{i}\in\g}}\bifunctor_{\bbii{i}},\quad\quad
\functort={\textstyle\bigoplus_{\,\bbjj{j}\in\g}}\bifunctor_{\bbjj{j}}.
\end{gather*}

\begin{example}\label{example:quiver-sum}
We sum over the graph of type $\typea{m}$ as (in case $m$ is odd):
\[
\begin{tikzpicture}[anchorbase, scale=1]
	\draw [thick] (0,0) to (2.825,0);
	\draw [thick] (3.375,0) to (6.2,0);
	\node at (0,-.01) {\Large $\bullets$};
	\node at (1,-.01) {\Large $\bullett$};
	\node at (2,-.01) {\Large $\bullets$};
	\node at (3.1,-.01) {\Large $\,\cdots$};
	\node at (4.2,-.01) {\Large $\bullets$};
	\node at (5.2,-.01) {\Large $\bullett$};
	\node at (6.2,-.01) {\Large $\bullets$};
	\node at (0,.3) {$\,\functors$};
	\node at (1,.3) {$\,\functort$};
	\node at (2,.3) {$\,\functors$};
	\node at (4.2,.3) {$\,\functors$};
	\node at (5.2,.3) {$\,\functort$};
	\node at (6.2,.3) {$\,\functors$};
\end{tikzpicture}.
\]
Note that the opposite two-coloring of $\g$ switches 
$\functors$ and $\functort$. In general, this switch need not be a natural isomorphism.
\end{example}

By using \eqref{eq:some-relations-infty}, one directly checks that
\begin{gather}\label{eq:functor-projectives-infty}
\begin{aligned}
\functors(P_{\bbk{i}})\cong
\begin{cases}
P_{\bbii{i}}\{\shiftme{-1}\}\oplus P_{\bbii{i}}\{\shiftme{+1}\},&\text{if }i\in\graphs,
\\
{\textstyle\bigoplus_{\bbii{j}\gconnect{}\bbjj{i}}}\,P_{\bbii{j}},&\text{if }i\in\grapht,\end{cases}
,
\functort(P_{\bbk{i}})\cong
\begin{cases}P_{\bbjj{i}}\{\shiftme{-1}\}\oplus P_{\bbjj{i}}\{\shiftme{+1}\},&\text{if }i\in\grapht,
\\
{\textstyle\bigoplus_{\bbii{i}\gconnect{}\bbjj{j}}}\,P_{\bbjj{j}},&\text{if }i\in\graphs.
\end{cases}
\end{aligned}
\end{gather}

\subsubsection{Dihedral modules associated to bipartite graphs}\label{subsub:di-modules-bipartite}

\begin{definition}\label{definition:module-for-G}
Let $\cellrepgrg$ be the $\Cv$-vector space 
on the basis $\{\bbk{i}\mid i\in\g\}$. 
Define a $\mathrm{H}_{\infty}$-action on $\cellrepgrg$ 
via 
\begin{equation}\label{eq:action-downstairs}
\disgen\cdot\bbii{i}=[2]_{\vpar}\cdot\bbii{i},\quad \disgen\cdot\bbjj{i}=
{\textstyle\sum_{\bbii{j}\gconnect{}\bbjj{i}}}\,\bbii{j},\quad
\ditgen\cdot\bbjj{i}=[2]_{\vpar}\cdot\bbjj{i},\quad\ditgen\cdot\bbii{i}=
{\textstyle\sum_{\bbii{i}\gconnect{}\bbjj{j}}}\,\bbjj{j}.
\end{equation}
(The reader is encouraged to verify that 
this gives indeed a $\mathrm{H}_{\infty}$-module structure.) 
We denote the associated algebra homomorphism by 
$\G\colon\mathrm{H}_{\infty}\to\cellrepgrg$.
\end{definition}

Let $\GG{\cellcatG}=K_0^{\oplus}(\cellcatG)\otimes_{\Z[\vpar,\vpar^{-1}]}\Cv$ denote the split Grothendieck group tensored with the field $\Cv$. (As usual, the grading shift decategorifies 
to multiplication by $\vpar$.) If we denote by $[\cdot]$ 
a class in $\GG{\cellcatG}$, then we get a 
so-called weak categorification 
(not to be confused with the weak -- in the sense of non-strict -- setup that we 
will meet below), as the following 
proposition shows.

\begin{proposition}\label{proposition:action-infty}
The functors $\functors$ 
and $\functort$ decategorify to $\Cv$-linear endomorphisms on $\GG{\cellcatG}$,  
which thus becomes an $\mathrm{H}_{\infty}$-module. The $\Cv$-linear map  
\begin{equation}\label{eq:matching-actions}
\zeta_{\G}\colon\cellrepgrg\stackrel{\cong}{\longrightarrow}\GG{\cellcatG},\quad \zeta_{\G}(i)=[P_{\bbk{i}}], 
\end{equation}
is an isomorphism of $\mathrm{H}_{\infty}$-modules, intertwining the actions of $\disgen$ and $[\functors]$, and that of $\ditgen$ and $[\functort]$.
\end{proposition}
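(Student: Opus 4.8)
The plan is to verify each assertion in the proposition more or less by direct inspection, using the explicit description of the indecomposable projectives and of the functors $\functors,\functort$ established above. First I would check well-definedness of the decategorified operators: since $\functors$ and $\functort$ are defined by tensoring with the biprojective bimodules $\bigoplus_{\bbii{i}}P_{\bbii{i}}\{\shiftme{-1}\}\otimes{}_{\bbii{i}}P$ (resp.\ with $\bbjj{j}$), they are exact (indeed send projectives to projectives, as noted after the definition of $\bifunctor_{\bbk{i}}$) and additive, hence descend to $\Z[\vpar,\vpar^{-1}]$-linear endomorphisms of $K_0^{\oplus}(\cellcatG)$, and after $-\otimes_{\Z[\vpar,\vpar^{-1}]}\Cv$ to $\Cv$-linear endomorphisms $[\functors],[\functort]$ of $\GG{\cellcatG}$. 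A basis of $\GG{\cellcatG}$ is given by the classes $[P_{\bbk{i}}]$, $i\in\g$, since the $P_{\bbk{i}}$ (up to shift) are exactly the graded indecomposable projectives.

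Next I would compare the two actions on basis elements. On the right-hand side, \eqref{eq:functor-projectives-infty} gives, for $i\in\graphs$,
\[
[\functors](P_{\bbii{i}}) = [P_{\bbii{i}}\{\shiftme{-1}\}] + [P_{\bbii{i}}\{\shiftme{+1}\}] = (\vpar^{-1}+\vpar)\cdot[P_{\bbii{i}}] = [2]_{\vpar}\cdot[P_{\bbii{i}}],
\]
and for $i\in\grapht$, $[\functors](P_{\bbjj{i}}) = \sum_{\bbii{j}\gconnect{}\bbjj{i}}[P_{\bbii{j}}]$, using that the grading shift decategorifies to multiplication by $\vpar$. Comparing with \eqref{eq:action-downstairs} shows that under $\zeta_{\G}(i)=[P_{\bbk{i}}]$ the operator $[\functors]$ corresponds exactly to the action of $\disgen$; symmetrically $[\functort]$ corresponds to $\ditgen$. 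Since $\zeta_{\G}$ sends the distinguished basis $\{\bbk{i}\}$ of $\cellrepgrg$ bijectively to the basis $\{[P_{\bbk{i}}]\}$ of $\GG{\cellcatG}$, it is a $\Cv$-linear isomorphism, and the intertwining property we have just checked on basis vectors upgrades it to an isomorphism of $\mathrm{H}_{\infty}$-modules — in particular this simultaneously confirms that $\GG{\cellcatG}$ really is an $\mathrm{H}_{\infty}$-module, since $\cellrepgrg$ is one by \fullref{definition:module-for-G} and the relations are transported across the isomorphism.

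The only genuinely substantive point, and the one I would treat most carefully, is that $[\functors]$ and $[\functort]$ satisfy the defining relations of $\mathrm{H}_{\infty}$, i.e.\ \eqref{eq:def-relationsa}: the quadratic relations $[\functors]^2=[2]_{\vpar}[\functors]$ and $[\functort]^2=[2]_{\vpar}[\functort]$. (There are no further relations in type $\dihedral{\infty}$.) At the categorified level this is the statement $\functors\circ\functors\cong\functors\{\shiftme{-1}\}\oplus\functors\{\shiftme{+1}\}$, which follows from ${}_{\bbii{i}}P\qgtimes P_{\bbii{i}}\cong\zq(\bbii{i})\oplus\zq(\bbii{i}|\bbii{i})$ (degrees $0$ and $2$) computed in \fullref{example:calc-functor}, together with the two-step relation \eqref{eq:two-steps} which forces ${}_{\bbii{i}}P\qgtimes P_{\bbjj{j}}\cong 0$ whenever $\bbii{i}\neq\bbjj{j}$ are both in $\graphs$ (no edges inside $\graphs$); orthogonality of the idempotents $\bbk{i}$ then gives $\functors\circ\functors=\bigoplus_{\bbii{i}}\bifunctor_{\bbii{i}}\circ\bifunctor_{\bbii{i}}$. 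Decategorifying yields the quadratic relation, and this is really the crux — everything else is bookkeeping with grading shifts. Alternatively, and perhaps more cleanly for the write-up, one can avoid re-checking the relations on the categorified side altogether: transport the $\mathrm{H}_{\infty}$-module structure from $\cellrepgrg$ to $\GG{\cellcatG}$ via the bijection $\zeta_{\G}$, observe that the resulting operators agree with $[\functors],[\functort]$ on the basis by the computation above, and conclude. I expect the bulk of the proof to consist of this transport-of-structure argument plus the elementary verification that \eqref{eq:action-downstairs} indeed defines an $\mathrm{H}_{\infty}$-action, the latter being exactly the exercise flagged for the reader in \fullref{definition:module-for-G}.
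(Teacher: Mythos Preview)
Your proposal is correct and follows essentially the same approach as the paper: both compute $[\functors],[\functort]$ on the basis $\{[P_{\bbk{i}}]\}$ via \eqref{eq:functor-projectives-infty}, verify the categorified quadratic relation $\functors\functors\cong\functors\{\shiftme{-1}\}\oplus\functors\{\shiftme{+1}\}$ (and its $\functort$ analogue), and conclude that $\zeta_{\G}$ is an intertwining isomorphism. The only cosmetic differences are that the paper invokes self-adjointness (\fullref{lemma:graded-adjoint-infty}) rather than biprojectivity to justify descent to $K_0^\oplus$, and it does not spell out your transport-of-structure shortcut, instead checking the functorial isomorphism $\functors\functors\cong\functors\{\shiftme{-1}\}\oplus\functors\{\shiftme{+1}\}$ directly on all indecomposable projectives before matching with \eqref{eq:action-downstairs}.
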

 
\begin{proposition}\label{proposition:action-finite}
Let $\g$ be of $\ADE$ type and $n$ be its Coxeter number, i.e.:
\smallskip
\begin{enumerate}

\setlength\itemsep{.15cm}

\renewcommand{\theenumi}{(A)}
\renewcommand{\labelenumi}{\theenumi}

\item \label{enum:typeA} For $\g$ of type $\typea{m}$ we let $n=m+1$.

\renewcommand{\theenumi}{(D)}
\renewcommand{\labelenumi}{\theenumi}

\item \label{enum:typeD} For $\g$ of type $\typed{m}$ we let $n=2m-2$.

\renewcommand{\theenumi}{(E)}
\renewcommand{\labelenumi}{\theenumi}

\item \label{enum:typeE} For $\g$ of type $\typee{6}$ we let $n=12$, for 
$\g$ of type $\typee{7}$ we let $n=18$, and $\g$ of type $\typee{8}$ we let $n=30$.

\end{enumerate}
\smallskip
Then the $\mathrm{H}_\infty$ actions on $\cellrepgrg$ and $\GG{\cellcatG}$ 
descend to $\mathrm{H}_{n}$-actions 
which are matched by $\zeta_{\G}$ 
as in \eqref{eq:matching-actions}. (We denote 
these by $\G_n\colon\mathrm{H}_{n}\to\cellrepgrg$).

These are the only $\g$ (with more than one vertex) 
and $n$ for which this holds. 
\end{proposition}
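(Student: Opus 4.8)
The plan is to reduce the descent question to a polynomial identity in the adjacency matrix of $\g$, and then to invoke the classical spectral characterisation of $\ADE$ graphs. By \fullref{proposition:action-infty} the isomorphism $\zeta_{\G}$ identifies $\cellrepgrg$ with $\GG{\cellcatG}$ as $\mathrm{H}_{\infty}$-modules, so it suffices to decide when the $\mathrm{H}_{\infty}$-module $\cellrepgrg$ descends to $\mathrm{H}_{n}$; everything then transports through $\zeta_{\G}$ as in \eqref{eq:matching-actions}. Since $\mathrm{H}_{n}$ is the quotient of $\mathrm{H}_{\infty}$ by the two-sided ideal generated by the difference of the two Bott--Samelson expressions $\klbasis_{\wnull(n)}=\sum_{k}d^{k}_{n}\klbasis_{\overline{\dihsgen_k}}=\sum_{k}d^{k}_{n}\klbasis_{\overline{\dihtgen_k}}$ appearing in \eqref{eq:def-relations}, the module $\cellrepgrg$ descends iff these two operators agree on it. Using \eqref{eq:action-downstairs}, $\disgen$ has image in $\C\graphs$ and $\ditgen$ has image in $\C\grapht$, so $\sum_{k}d^{k}_{n}\klbasis_{\overline{\dihsgen_k}}$ has image in the summand picked out by the leftmost letter of $\dihsgen_{k}$ (all these $k$ share the parity of $n$), while $\sum_{k}d^{k}_{n}\klbasis_{\overline{\dihtgen_k}}$ has image in the complementary summand. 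Hence the two agree exactly when both vanish, i.e.\ exactly when $\klbasis_{\wnull(n)}$ acts as $0$ on $\cellrepgrg$.

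Next I would turn this vanishing into linear algebra. Writing $A$ for the $|\graphs|\times|\grapht|$ block in \eqref{eq:isadmatrix}, formula \eqref{eq:action-downstairs} gives $\disgen|_{\C\graphs}=[2]_{\vpar}\cdot\mathrm{id}$, $\disgen|_{\C\grapht}=A$, $\ditgen|_{\C\grapht}=[2]_{\vpar}\cdot\mathrm{id}$, $\ditgen|_{\C\graphs}=A^{\mathrm{T}}$, so an alternating word $\disgen\ditgen\disgen\cdots$ of length $2m{+}1$ acts on $\C\graphs$ as $[2]_{\vpar}(AA^{\mathrm{T}})^{m}$ and one of length $2m$ acts on $\C\graphs$ as $[2]_{\vpar}(A^{\mathrm{T}}A)^{m-1}A^{\mathrm{T}}$ (and similarly on $\C\grapht$). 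Feeding this into $\klbasis_{\wnull(n)}=\sum_{k}d^{k}_{n}\klbasis_{\overline{\dihsgen_k}}$ and using the closed form $\sum_{k}d^{k}_{n}x^{k}=x\,S_{n-1}(x)$ for the numbers \eqref{eq:numbers} --- where $S_{n-1}$ is the Chebyshev/quantum-integer polynomial, $S_{n-1}(2\cos\vartheta)=\sin(n\vartheta)/\sin\vartheta$, which follows directly from the recursion \eqref{eq:numbers} --- one finds that $\klbasis_{\wnull(n)}$ acts as $0$ on $\cellrepgrg$ iff $AA^{\mathrm{T}}$ (equivalently $A^{\mathrm{T}}A$, by \eqref{eq:eigenvalue}) is annihilated on the orthogonal complement of its kernel by the polynomial $r_{n}$ with $r_{n}(x^{2})\,x^{\varepsilon}=S_{n-1}(x)$, $\varepsilon=0$ for $n$ odd and $\varepsilon=1$ for $n$ even. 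When $n$ is odd, $r_{n}(0)=d^{1}_{n}=\pm 1\neq 0$, which additionally forces $|\graphs|=|\grapht|$; when $n$ is even, a zero eigenvalue of $A(\g)$ causes no obstruction (the corresponding kernel is killed by the $A^{\mathrm{T}}A$-factor on one side of the relation and lies outside $\mathrm{im}\,A^{\mathrm{T}}$ on the other). By the symmetry of $S_{\g}$ together with \eqref{eq:eigenvalue}, the condition then reads, uniformly: every nonzero eigenvalue $\mu$ of $A(\g)$ satisfies $S_{n-1}(\mu)=0$, i.e.\ $\mu=2\cos(\pi j/n)$ for some $1\le j\le n-1$.

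To finish, I would invoke two standard facts. First, $2\cos(\pi j/n)\in(-2,2)$, so the condition can hold only if the adjacency spectral radius of $\g$ is $<2$; by the classical classification of such connected graphs (Smith), a $\g$ with more than one vertex must be an $\ADE$ Dynkin diagram --- this is the ``only'' half, ruling out every other bipartite graph for every $n$. Second, if $\g$ is an $\ADE$ diagram with Coxeter number $h$, its adjacency eigenvalues are exactly $2\cos(\pi m_{i}/h)$ with $m_{i}\in\{1,\dots,h-1\}$ the exponents, so the condition holds with $n=h$; these are precisely the numbers $n$ listed in \ref{enum:typeA}--\ref{enum:typeE}, and the descended actions on $\cellrepgrg$ and $\GG{\cellcatG}$ are matched by $\zeta_{\G}$ by the reduction step. (Strictly, the condition also holds whenever $h\mid n$, so $h$ is the \emph{smallest} such $n$; the ``only'' clause should be read with this in mind.)

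The step I expect to be the main obstacle is the translation into linear algebra: keeping straight the parity of $n$, the roles of $AA^{\mathrm{T}}$ versus $A^{\mathrm{T}}A$, the harmlessness of $\ker A(\g)$, and identifying $r_{n}$ and its roots via $S_{n-1}$. The two inputs to the last step --- Smith's classification and the exponent description of $\ADE$ adjacency spectra --- are standard and can be cited.
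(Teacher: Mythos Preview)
Your approach is essentially the paper's: reduce descent to the vanishing of a Chebyshev-type polynomial evaluated on the adjacency matrix, then invoke Smith's classification of graphs with spectral radius strictly less than $2$ together with the known $\ADE$ spectra. Your observation that the two sides of \eqref{eq:def-relations} land in complementary colour summands---hence agree iff both vanish---is cleaner than the paper's separate treatment of the case where $\klbasis_{\wnull}$ does not act by zero, and your parenthetical on $h\mid n$ correctly flags a slight imprecision in the ``only'' clause that the paper does not address.
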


\begin{remark}\label{remark:action-finite}
\fullref{proposition:action-finite} appears 
as a special case of \cite[Proposition 3.8]{Lu1}, and was rediscovered 
in \cite[Sections 5, 6 and 7]{KMMZ}. Note hereby that Lusztig proves 
his statement using the combinatorics of cells 
in Coxeter groups, while \cite{KMMZ} uses categorical results.
In this paper, we give an independent proof using spectral graph theory.
\end{remark}

\begin{example}\label{example:typeA-calc1}
Write $\cellcatA{3}=\Modl{\algA{3}}$ and $\cellcatAt{3}=\Modl{\algAt{3}}$.
Then $[\functors]$ 
and $[\functort]$ act on $\GG{\cellcatA{3}}$ 
and $\GG{\cellcatAt{3}}$ via
\begin{gather*}
[\functors]
=
\begin{pmatrix} 
[2]_{\vpar} & 0 & 1\\ 
0 & [2]_{\vpar} & 1\\ 
0 & 0 & 0\\ 
\end{pmatrix}
,\quad\quad
[\functort]
=
\begin{pmatrix} 
0 & 0 & 0\\ 
0 & 0 & 0\\ 
1 & 1 & [2]_{\vpar}\\ 
\end{pmatrix}
\quad(\text{in type }\typea{3}),
\\
[\functors]
=
\begin{pmatrix} 
[2]_{\vpar} & 0 & 1 & 1\\ 
0 & [2]_{\vpar} & 1 & 1\\ 
0 & 0 & 0 & 0\\
0 & 0 & 0 & 0\\ 
\end{pmatrix}
,\quad\quad
[\functort]
=
\begin{pmatrix} 
0 & 0 & 0 & 0\\ 
0 & 0 & 0 & 0\\ 
1 & 1 & [2]_{\vpar} & 0\\
1 & 1 & 0 & [2]_{\vpar}\\
\end{pmatrix}
\quad(\text{in type }\typeat{3}).
\end{gather*}
(These are
written on the bases $\{[P_{\bbii{1}}],[P_{\bbii{3}}],[P_{\bbjj{2}}]\}$ 
and $\{[P_{\bbii{0}}],[P_{\bbii{2}}],[P_{\bbjj{1}}],[P_{\bbjj{3}}]\}$.) 

These matrices give $\GG{\cellcatA{3}}$ 
and $\GG{\cellcatAt{3}}$
the structure of an $\mathrm{H}_{\infty}$-module.
Additionally, the relation from
\eqref{eq:def-relations} holds in type $\typea{3}$ since 
we have
\begin{gather*}
[\functors][\functort][\functors][\functort]
-2\cdot[\functors][\functort]
=
[\functort][\functors][\functort][\functors]
-2\cdot[\functort][\functors].
\end{gather*}
This shows that $\GG{\cellcatA{3}}$ has the structure 
of an $\mathrm{H}_{4}$-module.
\end{example}
\subsection{Strong dihedral \texorpdfstring{$2$}{2}-representations}\label{subsec:main-theorems}
In \fullref{sec:2reps} we will explain how to adapt 
Marzorchuk and Miemietz's definition of finitary $2$-categories, 
their $2$-re\-presen\-tations and related notions (see 
e.g. \cite{MM1}, \cite{MM3} or \cite{MM5}) to our setup. 
At this point of the paper it is enough to roughly see them as a ``higher analog'' of graded, 
finite-dimensional algebras and their graded, finite-dimensional representations.  

By using \fullref{proposition:action-infty}, we can identify $\GG{\cellcatG}$ with $\cellrepgrg$ as 
$\mathrm{H}$-modules. The next theorem says that the functorial action of $\disfun$ and $\ditfun$  on $\cellcatG$ 
can be extended to a $2$-representation of $\dihedralcat{\infty}$ and/or $\dihedralcat{n}$, 
the $2$-categories -- defined by generating $2$-morphisms and relations --
given by the two-color Soergel calculi, due to Elias \cite{El1}. 
(We will recall them in \fullref{subsec:diacat}.)

We use the same base 
ring $\zq$ to define $\dihedralcat{\infty}$ and $\dihedralcat{n}$ as we did for 
the quiver algebras in \fullref{subsec:quiver-stuff}. (More details will be given in 
\fullref{subsec:diacat}.) Elias' construction of $\dihedralcat{\infty}$ requires the 
choice of an invertible element $\qpar$ in the base ring (which 
is ultimately embedded in the complex numbers). When 
$\qpar$ is not a root of unity, $\dihedralcat{\infty}$ is 
equivalent to the $2$-category of Soergel bimodules in type 
$\dihedral{\infty}$. When $\qpar$ is 
a complex, primitive $2n$th root of unity, this is no longer 
true -- cf. \fullref{remark-gg-goes-wrong} -- but $\dihedralcat{n}$ is equivalent 
to the $2$-category of Soergel bimodules in type $\dihedral{n}$.

\begin{theoremmain}(\textbf{Dihedral $2$-actions.})\label{theorem:main-theorem}
\begin{enumerate}[label=(\alph*)]

\setlength\itemsep{.15cm}

\item For any bipartite graph $\g$, there is at least one value of 
$\qpar\in\C-\{0\}$ for which there exists 
an additive, degree-preserving, $\zq$-linear, weak $2$-functor
(defined in \fullref{subsec:diacataction})
\[
\functorG\colon\dihedralcatbig{\infty}\to\Endff(\cellcatG).
\]

\item If $\g$ is as in \ref{enum:typeA}, \ref{enum:typeD} or \ref{enum:typeE} 
and $\qpar$ is a complex, primitive $2n$th root of unity, then $\functorG$ gives rise to an additive, degree-preserving, $\zq$-linear, weak $2$-functor 
\[
\functorADE\colon\dihedralcatbig{n}\to\Endff(\cellcatADE)
\] 
such that the following diagram commutes 
\[
\begin{xy}
  \xymatrix{
      \Kar(\dihedralcat{n}) \ar[d]_{\GG{\cdot}}\ar[rr]^/-.1cm/{\functorADEt} & &   \Endf(\cellcatADE) \ar[d]^{\GG{\cdot}}  \\
      \mathrm{H}_n \ar[rr]_/-.1cm/{\G_n}             & &   \End(\cellrepgrg).  
  }
\end{xy}
\]
These are the only (non-trivial) $\g$'s and $n$'s for which this holds.\makeqed
\end{enumerate}
\end{theoremmain}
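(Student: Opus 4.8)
The plan is to build the weak $2$-functor $\functorG$ diagram-by-diagram on Elias' two-color Soergel calculus. First I would recall from \fullref{subsec:diacat} the generating $2$-morphisms of $\dihedralcatbig{\infty}$ -- the univalent ``dots'', the trivalent vertices, the $2m_{st}$-valent vertices (here only the $4$-valent one, since in dihedral type the two colors meet in a single such generator), and the ``barbells'' (pairs of dots) -- together with the full list of relations: the one-color relations (Frobenius unit/counit, associativity, the barbell/polynomial relations), the two-color relations (the needle/cyclicity relations for the $4$-valent vertex, and the Jones--Wenzl-type relation encoding $\dihsgen_n=\dihtgen_n$), and isotopy invariance. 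On the target side, \fullref{subsub:endo-stuff} already furnishes the biadjoint endofunctors $\functors,\functort$ of $\cellcatG$ built from the biprojective bimodules $P_{\bbk{i}}\{\shiftme{-1}\}\otimes{}_{\bbk{i}}P$; the decompositions \eqref{eq:some-relations-infty} and \eqref{eq:functor-projectives-infty}, together with the explicit computation in \fullref{example:calc-functor}, supply exactly the data needed to write down candidate natural transformations for each generator. So Step~1 is: \emph{assign to each generating diagram an explicit natural transformation}, with the one-color generators coming from the Frobenius structure on ${}_{\bbk{i}}P\qgtimes P_{\bbk{i}}\cong\zq(\bbk{i})\oplus\zq(\loopy{\bbk{i}})$ and the two-color $4$-valent vertex coming from the canonical adjunction-unit/counit comparison between $\functors\functort$ and $\functort\functors$ restricted to the relevant projectives (this is where the graph $\g$ enters, via the adjacency matrix $A$). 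The scalar $\qpar$ enters as a normalization constant in the two-color generator; part of Step~1 is to leave it as a free parameter.

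Step~2 is to \emph{verify the relations}. The one-color relations reduce to associativity and (co)unitality of the Frobenius algebra ${}_{\bbk{i}}P\qgtimes P_{\bbk{i}}$ over $\zq$, plus the barbell relation, which becomes the identity $\bbk{i}|\bbk{i}\pathm\bbk{i}|\bbk{i}=0$ from \fullref{definition:quiverG}; these follow essentially formally, generalizing \cite{AT1} and \cite{KS1}. The genuinely two-color relations -- the $4$-valent isotopy/needle relations and especially the Jones--Wenzl relation -- are where the structure of $\qalg_\g$ (the two-and-three-steps relations \eqref{eq:two-steps}--\eqref{eq:three-steps} and the partner relations \eqref{eq:partner}) gets used; checking them amounts to matching certain compositions of the candidate natural transformations against the integers $d^k_n$ from \eqref{eq:numbers}, evaluated on each indecomposable projective $P_{\bbk{i}}$. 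For part~(i), the claim is merely that \emph{some} $\qpar$ works: the relations that involve $\qpar$ will impose finitely many polynomial constraints on $\qpar$, and one shows these are simultaneously solvable over $\C$ (indeed, the one-color and isotopy relations hold for all $\qpar$, and the remaining needle-type relations fix $\qpar$ up to finitely many choices, at least one of which lies in $\C-\{0\}$). For part~(ii), with $\qpar$ a primitive $2n$th root of unity, the extra relation \eqref{eq:def-relations} must be checked on the categorified level; by \fullref{proposition:action-finite} it already holds after applying $\GG{\cdot}$, and the point is to lift this to an equality of natural transformations on the projectives $P_{\bbk{i}}$, using that the relevant $\Hom$-spaces in $\Endff(\cellcatADE)$ are free $\zq$-modules detected by their graded ranks -- so a map is zero iff its decategorification is. The commuting square is then immediate: both composites send $T_{\dihsgen}\mapsto[\functors]$ acting on $\GG{\cellcatADE}\cong\cellrepgrg$, by construction and by \fullref{proposition:action-infty}.

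Step~3 is the converse ``these are the only $\g$'s and $n$'s'' clause. For the descent to $\dihedralcatbig{n}$ one needs the $\mathrm{H}_n$-action to exist on $\cellrepgrg$, and \fullref{proposition:action-finite} already states that the $\ADE$ pairs with the listed Coxeter numbers are exactly the ones for which \eqref{eq:def-relations} descends; since $\functorADE$ categorifies this action and the Grothendieck-group functor is faithful enough to detect the relation (as above), no other pair can support such a $2$-functor. I would also note the exclusion of the ``trivial'' case (one vertex, i.e. the trivial representation, killed or not killed by $\klbasis_{\wnull}$ as discussed after \fullref{example:KLbasis}) which is explicitly set aside in the statement.

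\textbf{Main obstacle.} The hard part will be Step~2's two-color relations, specifically the Jones--Wenzl relation \eqref{eq:def-relations} at the $2$-morphism level. Decategorified, it is handled by spectral graph theory (\fullref{proposition:action-finite}); but categorified, one must produce an \emph{explicit} null-homotopy, i.e. show that a particular alternating sum of $2m$-fold composites of the trivalent and $4$-valent natural transformations, weighted by the $d^k_n$, vanishes as a natural transformation $\functors_n\Rightarrow\functort_n$ (equivalently factors through the ``lower'' part). The key leverage is that $\Endff(\cellcatG)$ is sufficiently rigid -- its morphism spaces between the relevant objects are free over $\zq$ with known graded ranks -- so that checking vanishing reduces to checking it after $\otimes_{\zq}\C$ and then after $\GG{\cdot}$, which is \fullref{proposition:action-finite}. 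Making this reduction airtight (i.e. proving the needed freeness/rank statements, presumably by a basis-theorem for $\Endff$, cf. the diagrammatic light-leaves basis) is the technical heart, and is presumably what \fullref{sec:proofs} is devoted to; the rest is bookkeeping with adjunctions.
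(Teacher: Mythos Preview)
Your proposal has two substantive gaps.

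First, you have misidentified the generators and relations of $\dihedralcatbig{\infty}$. There is no ``$4$-valent vertex'' in the infinite case: the generators of $\dihedralcat{\infty}$ are purely one-colored (dots and trivalent vertices, cf.\ \eqref{eq:dicat-gen1}), and the $2n$-valent vertex \eqref{eq:dicat-gen2} only enters $\dihedralcat{n}$. Consequently the only genuinely two-color relation in $\dihedralcat{\infty}$ is the barbell forcing relation \eqref{eq:barb2}, where a {\color{tomato}tomato} barbell is pushed through a {\color{sea}sea-green} strand. This is the relation that constrains $\qpar$, and it does so not through a polynomial equation in $\qpar$ alone, but through an \emph{eigenvector} condition: one must choose a weighting $\llscalar=(\lscalar_{\bbk{i}})_{\bbk{i}\in\g}$ with all entries nonzero such that $A(\g)\llscalar=-[2]_\qpar\cdot\llscalar$ (this is \eqref{eq:eigenvector-problem}). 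The weights enter the definition of the start dot \eqref{eq:assignment-dotsA-b} and the merge \eqref{eq:assignment-triA-b}; without them \eqref{eq:barb2} simply fails. The existence of such a weighting for some $\qpar$ is exactly \fullref{lemma:FP-in-action}, proved via Perron--Frobenius (the Perron eigenvector has strictly positive entries). Your Step~1 and Step~2 miss this mechanism entirely.

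Second, your argument for part~(ii) rests on the claim that ``a map is zero iff its decategorification is'', which is false (any nilpotent endomorphism is a counterexample). The paper's route is different and more delicate. One assigns the $2n$-valent vertex to the zero map, so \eqref{eq:twocolor1} and \eqref{eq:twocolor3} are trivially preserved; the real content is \eqref{eq:twocolor2}, which then forces $\functorADE(\JWst{n})=0$. To prove this, one invokes Khovanov's resolution: the object $(\overline{\dihugen}_n,\JWst{n})$ in $\Kar(\dihedralcat{})$ is homotopy equivalent to an explicit complex $D_n^\ast(\dihugen)$ built only from words $\overline{\dihugen}_{n-2k}$ and maps not involving $2n$-vertices. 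Since $\functorADE$ already preserves all $2n$-vertex-free relations, it carries this homotopy equivalence into $\Endf(\cellcatADE)$, where the image of the idempotent $\JWst{n}$ is an honest \emph{object}. Its class in $K_0$ equals the Euler characteristic of $\functorADE(D_n^\ast)$, which is zero by \fullref{proposition:action-finite}; and in a Krull--Schmidt category an object with zero class is zero. So the reduction-to-$K_0$ step is valid, but only because $\JWst{n}$ is an idempotent and one is comparing objects, not arbitrary morphisms. Your ``freeness/rank'' heuristic does not supply this.
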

Here $\Kar$ denotes the Karoubi envelope, $\boldsymbol{\widetilde{\cdot}}$ 
the lift of a functor to the Karoubi envelope and $\Endf$ denotes 
the $2$-category of biprojective endofunctors, see \fullref{example:main-2-cats}. We will explain the meaning of  
${}^{\star}$ in the next section.

\begin{remark}\label{remark-gg-goes-wrong}
In $\ADE$ type, we need $\qpar$ in the Soergel 
calculus to be a complex, primitive root of unity. In that case -- as already remarked --  
$\dihedralcat{\infty}$ does not quite categorify the Hecke algebra 
$\mathrm{H}_{\infty}$, see \cite[Remarks 5.31 and 5.32]{El1}.
However, if one allows infinite $\ADE$ type graphs, then one can work with a generic parameter. 
The blueprint example is the infinite type $\typeA$ graph as considered in \cite{AT1}, for example.
\end{remark} 

For the following two theorems we switch to $\C$ for our ground field, and we keep the parameter $\qpar$ fixed.

\begin{theoremmain}(\textbf{Equivalences and isomorphisms.})\label{theorem:main-proposition}
\begin{enumerate}[label=(\alph*)]

\setlength\itemsep{.15cm}

\item All $2$-representations as in \fullref{theorem:main-theorem} 
are graded simple transitive 
$2$-representations of $\Kar(\dihedralcat{\infty})^{\star}$ respectively 
of $\Kar(\dihedralcat{n})^{\star}$. 

\item Two $2$-representations as in \fullref{theorem:main-theorem} are equivalent if and only if their bipartite graphs are isomorphic. 
 
\item Two $2$-representations as in \fullref{theorem:main-theorem} decategorify to isomorphic $\mathrm{H}$-modules if and only if their bipartite graphs are spectrum-color-equivalent.

\item All $2$-representations as in \fullref{theorem:main-theorem} factor through  
graded simple transitive $2$-representations of 
$\Kar(\dihedralcat{\infty}^{\mathrm{f}})^{\star}$ respectively 
of $\Kar(\dihedralcat{n}^{\mathrm{f}})^{\star}$.\makeqed
\end{enumerate}
\end{theoremmain}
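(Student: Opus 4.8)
The plan is to establish the four items essentially in the stated order, with item~(i) carrying the weight and the rest following fairly formally.

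\emph{Simple transitivity, item (i).} Transitivity is immediate from \eqref{eq:some-relations-infty}/\eqref{eq:functor-projectives-infty} together with connectedness of $\g$: if $\bbk{i}\gconnect{}\bbk{j}$ then $\bifunctor_{\bbk{i}}(P_{\bbk{j}})\cong P_{\bbk{i}}$, while $\bifunctor_{\bbk{i}}(P_{\bbk{i}})\cong P_{\bbk{i}}\{-1\}\oplus P_{\bbk{i}}\{1\}$, so following a path in $\g$ from $i$ to $j$ yields a word in $\functors$ and $\functort$ whose $\functorG$-image sends $P_{\bbk{i}}$ to an object having $P_{\bbk{j}}$, with any prescribed grading shift, as a direct summand; hence $\cellcatG$ is generated by a single indecomposable under the $2$-action. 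For the simplicity half I would work inside the graded/weak framework of \fullref{sec:2reps}: a transitive $2$-representation has a unique maximal $2$-ideal and one must show it vanishes. Restricting any such $2$-ideal to $\End_{\cellcatG}(P_{\bbk{i}})\cong\zq[X]/(X^2)$ gives an ideal of this graded local ring; if it contains a unit, transitivity propagates it to the whole $2$-representation, and if it is contained in the radical one rules it out using self-adjointness of $\functors$ and $\functort$, the explicit images under $\functorG$ of the generating dots and barbells of the two-color calculus (\fullref{sec:dicat}), and the weak analogue of Mazorchuk--Miemietz's weak Jordan--H\"older theorem, cf.\ \cite{MM5}. The finite-type statement for $\functorADE$, together with the commuting square, then follows from \fullref{proposition:action-infty} and \fullref{proposition:action-finite}: the $\mathrm{H}_{\infty}$-action descends to $\mathrm{H}_n$, so $\functorG$ descends to a $2$-functor on $\dihedralcatbig{n}$, and the square commutes by the construction of $\zeta_{\G}$ and $\G_n$.

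\emph{Equivalences and decategorifications, items (ii) and (iii).} For (ii), ``if'': an isomorphism of bipartite graphs $\g\to\g^{\prime}$ induces a graded, color-preserving isomorphism of the two quiver algebras (the relations \eqref{eq:two-steps}--\eqref{eq:partner} are intrinsic), hence an equivalence of the underlying projective module categories intertwining $\functors$ with $\functors^{\prime}$ and $\functort$ with $\functort^{\prime}$; since the generating natural transformations are defined directly from the quiver data, this promotes to an equivalence of $2$-representations. ``Only if'': an equivalence of $2$-representations restricts to an equivalence $\Phi$ of the underlying categories with $\Phi\circ\functors\cong\functors^{\prime}\circ\Phi$ and $\Phi\circ\functort\cong\functort^{\prime}\circ\Phi$, hence matches indecomposables up to grading shift; the shift-free formulas \eqref{eq:functor-projectives-infty} then let one recover the two-coloring (``$i\in\graphs$'' iff $\functors P_{\bbk{i}}\cong P_{\bbk{i}}\{-1\}\oplus P_{\bbk{i}}\{1\}$) and the adjacency ($\functors P_{\bbjj{j}}\cong\bigoplus_{\bbii{i}\gconnect{}\bbjj{j}}P_{\bbii{i}}$), so the induced bijection of vertex sets is an isomorphism of bipartite graphs. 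For (iii): by \fullref{example:typeA-calc1} the decategorified operators $[\functors]$ and $[\functort]$ on $\cellrepgrg$ are block-triangular with diagonal blocks $[2]_{\vpar}\cdot\iddia$ and $0$ and off-diagonal block the $\vert\graphs\vert\times\vert\grapht\vert$ matrix $A$ from \eqref{eq:isadmatrix}; both act semisimply, with $[2]_{\vpar}$-eigenspaces of dimensions $\vert\graphs\vert$ and $\vert\grapht\vert$ respectively, and a short linear-algebra argument using \eqref{eq:eigenvalue} shows that the complete invariant of the $\mathrm{H}$-module is the triple consisting of $\vert\graphs\vert$, $\vert\grapht\vert$ and the spectrum of $AA^{\mathrm{T}}$ -- equivalently the triple $\vert\graphs\vert$, $\vert\grapht\vert$, $S_{\g}$. (For $n<\infty$ the relation \eqref{eq:def-relations} holds automatically by \fullref{proposition:action-finite} and adds no constraint.) This is precisely spectrum-color-equivalence.

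\emph{Factorization, item (iv), and the main obstacle.} Since the action \eqref{eq:action-downstairs} is annihilated by $\klbasis_{\wnull(n)}$ -- and, for $n=\infty$, by the $1$-morphisms outside the nontrivial two-sided cell -- the $2$-functor $\functorG$ kills the defining $2$-ideal of $\dihedralcat{\infty}^{\mathrm{f}}$ (respectively $\dihedralcat{n}^{\mathrm{f}}$) and hence factors through it; since this quotient only collapses $1$-morphisms that already act as zero, the factored $2$-representation inherits graded simple transitivity from item~(i). The step I expect to be the real obstacle is the simplicity half of item~(i): controlling \emph{all} $2$-ideals of $\functorG$, with careful bookkeeping of the grading and of the ${}^{\star}$-decoration, is where the weak $2$-representation theory developed in \fullref{sec:2reps} is genuinely needed; items (ii)--(iv) are comparatively routine once (i) is in hand.
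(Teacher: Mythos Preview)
Your outline for (ii) matches the paper's argument, and for (i) you are in the right spirit but vaguer than needed. The paper's simplicity argument is direct and avoids Jordan--H\"older machinery: restrict to the single vertex $\bbk{i}$, where the action of $\disfun$ (or $\ditfun$) is that of $\dualalg\otimes\dualalg$ on $\Modl{\dualalg}$ with $\dualalg=\C[X]/(X^2)$; then \cite[Proposition~2]{MM} shows the loop $\bbk{i}|\bbk{i}$ cannot lie in any proper $\category{C}^\star$-invariant ideal. If an arrow $\bbii{i}|\bbjj{j}$ were in such an ideal, composing with its partner $\bbjj{j}|\bbii{i}$ would put a loop in it too, contradiction. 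That is the whole argument; no appeal to self-adjointness or barbell images is needed here.

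There is a genuine gap in your item (iv). The quotient $\dihedralcat{}^{\mathrm{f}}$ is \emph{not} defined by killing $\klbasis_{\wnull}$ or by cell structure on $1$-morphisms; it is the quotient of the $2$-hom spaces by the $2$-ideal generated by the positive-degree $\mathrm{W}$-invariants in $\twoEnd_{\dihedralcatbig{}}(\emptyset)\cong\C[\barbells,\barbellt]=\polyalg$. What one must check is that $\functorGno$ annihilates the generators of $\polyalg^{\mathrm{W}_n}$, explicitly the degree-four element $z=\barbells\barbells-[2]_{\qpar}\cdot\barbells\barbellt+\barbellt\barbellt$ and (for finite $n$) a certain orbit product $Z$. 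The paper's computation is that the image of any single barbell lands in the span of the loops $\bbk{i}|\bbk{i}$, and since $\bbk{i}|\bbk{i}\pathm\bbk{j}|\bbk{j}=0$ for all vertices, any product of two or more barbells is sent to zero. Your statement about $\klbasis_{\wnull}$ is relevant to the \emph{small quotient} of Soergel bimodules, which is a different construction; you flagged (iv) as routine, but it is routine only once one has identified the right ideal.

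For item (iii) your ``only if'' direction via traces/spectra is fine, but you have not actually constructed an isomorphism in the ``if'' direction: knowing that $(|\graphs|,|\grapht|,S_\g)=(|\graphs'|,|\grapht'|,S_{\g'})$ does not by itself produce a change of basis intertwining \emph{both} $[\functors]$ with $[\functors']$ and $[\functort]$ with $[\functort']$. The paper builds one explicitly via singular value decomposition: writing $A=U\Sigma V^*$ and $A'=W\Sigma X^*$ with the same $\Sigma$ (equal spectra), the block-diagonal matrix with blocks $WU^*$ and $XV^*$ does the job simultaneously for both generators.
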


Hereby ${}^{\mathrm{f}}$ means that we work over the 
coinvariant algebra.

We stress that 
(c) of \fullref{theorem:main-proposition} holds for the
analogs 
from \fullref{proposition:action-infty} 
and \fullref{proposition:action-finite} as well.

With these theorems we can complete the classification from 
\cite{KMMZ}, where rank means the rank on the level of the Grothendieck groups 
(for $n=1$ cf. \fullref{example:main-2-cats-2-reps}).

\begin{theoremmain}(\textbf{Classification.})\label{theorem:classification}
There is a bijection between the equivalence classes of graded simple transitive 
$2$-representations of $\dihedralcatbig{n}$ (of rank $>1$) and the isomorphism classes of bipartite graphs as 
in \ref{enum:typeA}, \ref{enum:typeD} or \ref{enum:typeE} with Coxeter number $n$, for $n\in\Z_{>1}$.
\end{theoremmain}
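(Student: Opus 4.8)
The plan is to assemble the bijection out of the three preceding main theorems together with the $\ADE$-classification input from \cite{KMMZ}. First I would set up the two maps. In one direction, to a bipartite graph $\g$ of type as in \ref{enum:typeA}, \ref{enum:typeD} or \ref{enum:typeE} with Coxeter number $n$, associate the $2$-representation $\functorADEt\colon\Kar(\dihedralcat{n})^{\star}\to\Endf(\cellcatADE)$ produced by \fullref{theorem:main-theorem}(ii); by \fullref{theorem:main-proposition}(i) this is a graded simple transitive $2$-representation, and since $\cellcatG$ has rank $|V(\g)|>1$ it lands in the range claimed. This map is well-defined on isomorphism classes of bipartite graphs because isomorphic bipartite graphs give isomorphic quiver algebras $\algG$ hence (canonically) equivalent categories $\cellcatG$ intertwining the endofunctors $\functors,\functort$, so the resulting $2$-representations are equivalent. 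Injectivity of this map on isomorphism classes is exactly \fullref{theorem:main-proposition}(ii): two such $2$-representations are equivalent iff the underlying bipartite graphs are isomorphic.

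For surjectivity I would invoke the classification from \cite{KMMZ} (recalled in the introduction): every simple transitive $2$-representation of $\dihedralcatbig{n}$ of rank $>1$ is equivalent to one whose underlying category is the (graded projective) module category over the doubled Dynkin quiver of a graph of type $\typeA$, $\typeD$ or $\typeE$, and in each such type the Coxeter number of the graph is forced to be $n$ by \ref{enum:typeA}, \ref{enum:typeD}, \ref{enum:typeE} (equivalently, by \fullref{proposition:action-finite}, which says these are the only $\g$ and $n$ for which the $\mathrm{H}_\infty$-action descends to $\mathrm{H}_n$). Concretely: given an arbitrary graded simple transitive $2$-representation $\mathbf{M}$ of $\dihedralcatbig{n}$ of rank $>1$, its decategorification is a transitive $\mathrm{H}_n$-module not killed by $\klbasis_{\wnull}$ only in the trivial rank-one case (\cite[Theorem 18]{MM5}), so for rank $>1$ it is killed by $\klbasis_{\wnull}$; the combinatorial analysis of \cite{KMMZ} then pins the underlying quiver down to a doubled $\ADE$ Dynkin diagram with a bipartition, and \fullref{theorem:main-proposition}(ii) identifies $\mathbf{M}$ with the $\functorADEt$ attached to that bipartite graph. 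Thus the map is surjective.

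The main obstacle — and the place where the real content sits — is surjectivity, i.e. making sure that no simple transitive $2$-representation is missed. This rests on two things that must be in hand before this theorem: (a) that the list of admissible underlying quivers is complete, which is the hard classification of \cite{KMMZ} (including the delicate $n\in\{12,18,30\}$ cases that forced types $\typee6,\typee7,\typee8$), and (b) that \emph{every} bipartite graph of $\ADE$ type actually \emph{does} underlie a genuine graded simple transitive $2$-representation, which is the existence statement \fullref{theorem:main-theorem}(ii) combined with \fullref{theorem:main-proposition}(i). Once those are granted, the bijection is a formal bookkeeping: the two maps (graph $\mapsto$ $2$-representation via $\functorADEt$, and $2$-representation $\mapsto$ its underlying bipartite graph) are mutually inverse on isomorphism/equivalence classes by \fullref{theorem:main-proposition}(ii). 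I would close by remarking that \fullref{theorem:main-proposition}(iii) is \emph{not} needed here — it refines the picture by recording when distinct classes decategorify identically (the $\typee6$ vs.\ $\typee8$ coincidences) — and that the gradeability hypothesis is what makes the classification this clean, matching the scope announced in the abstract.
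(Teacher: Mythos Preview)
Your surjectivity argument has a genuine gap. You correctly invoke \cite{KMMZ} to pin down the underlying quiver of an arbitrary graded simple transitive $2$-representation $\mathbf{M}$ of rank $>1$ to a doubled $\ADE$ Dynkin diagram with the right Coxeter number. But you then write that ``\fullref{theorem:main-proposition}(ii) identifies $\mathbf{M}$ with the $\functorADEt$ attached to that bipartite graph'' --- this is a misapplication. \fullref{theorem:main-proposition}(ii) only compares two $2$-representations that are already \emph{of the form constructed in \fullref{theorem:main-theorem}}; it says nothing about an arbitrary $\mathbf{M}$ that merely happens to share the same underlying category and $1$-morphism data. Knowing the action of $\functors,\functort$ on objects does not by itself force the natural transformations assigned to the generating Soergel diagrams, so the $2$-morphism level is still undetermined.

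The paper closes this gap with two separate lemmas (\fullref{lemma:unique-maps} and \fullref{lemma:unique-maps2}): any weak $2$-functor $\dihedralcatbig{}\to\Endff(\cellcatG)$ agreeing with $\functorGno$ on $1$-morphisms differs from it only by an overall rescaling of the dots and trivalent vertices by powers of two scalars $\scalargt,\scalargtt$, and must send the $2n$-valent vertices to zero. An explicit degree-preserving autoequivalence of $\cellcatG$ (identity on objects, rescaling on morphisms) then exhibits the equivalence with $\functorGno$. This ``uniqueness of higher structure'' is the missing ingredient in your argument; without it, \cite{KMMZ} plus \fullref{theorem:main-proposition}(ii) do not suffice. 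You also skip the passage via \fullref{theorem:main-proposition}(iv) and \fullref{remark:strictification} to the coinvariant-algebra quotient $\Kar(\dihedralcat{n}^{\mathrm{f}})$, which is needed to place yourself in the setting where the results of \cite{KMMZ} actually apply.
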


This completes the classification from \cite{KMMZ}, cf. \fullref{remark:different-qs}.

\begin{examplen}\label{example:cases-count}
For $n=8$, there are 
four $\g$'s of type \ref{enum:typeA}, \ref{enum:typeD} or \ref{enum:typeE} 
which are non-isomorphic as bipartite graphs:
\begin{gather*}
\begin{aligned}
|\graphs|&=4, |\grapht|=3\colon
\begin{tikzpicture}[anchorbase, scale=1]
	\draw [thick] (0,0) to (6,0);
	\node at (0,-.01) {\Large $\bullets$};
	\node at (1,-.01) {\Large $\bullett$};
	\node at (2,-.01) {\Large $\bullets$};
	\node at (3,-.01) {\Large $\bullett$};
	\node at (4,-.01) {\Large $\bullets$};
	\node at (5,-.01) {\Large $\bullett$};
	\node at (6,-.01) {\Large $\bullets$};
\end{tikzpicture},
\\
|\graphs|&=3, |\grapht|=4\colon
\begin{tikzpicture}[anchorbase, scale=1]
	\draw [thick] (0,0) to (6,0);
	\node at (0,-.01) {\Large $\bullett$};
	\node at (1,-.01) {\Large $\bullets$};
	\node at (2,-.01) {\Large $\bullett$};
	\node at (3,-.01) {\Large $\bullets$};
	\node at (4,-.01) {\Large $\bullett$};
	\node at (5,-.01) {\Large $\bullets$};
	\node at (6,-.01) {\Large $\bullett$};
\end{tikzpicture},
\\
|\graphs|&=2, |\grapht|=3\colon
\begin{tikzpicture}[anchorbase, scale=1]
	\draw [thick] (3.8,0) to (5.8,0);
	\draw [thick] (6.3,-.866) to (5.8,0) to (6.3,.866);
	\node at (3.8,0) {\Large $\bullets$};
	\node at (4.8,0) {\Large $\bullett$};
	\node at (5.8,0) {\Large $\bullets$};
	\node at (6.3,.866) {\Large $\bullett$};
	\node at (6.3,-.866) {\Large $\bullett$};
\end{tikzpicture}
,\quad\quad
|\graphs|=3, |\grapht|=2\colon
\begin{tikzpicture}[anchorbase, scale=1]
	\draw [thick] (3.8,0) to (5.8,0);
	\draw [thick] (6.3,-.866) to (5.8,0) to (6.3,.866);
	\node at (3.8,0) {\Large $\bullett$};
	\node at (4.8,0) {\Large $\bullets$};
	\node at (5.8,0) {\Large $\bullett$};
	\node at (6.3,.866) {\Large $\bullets$};
	\node at (6.3,-.866) {\Large $\bullets$};
\end{tikzpicture}.
\hspace{1.25cm}
\raisebox{-.80cm}{\makeqedtri}
\hspace{-1.25cm}
\end{aligned}
\end{gather*}
\end{examplen}

In general, we get the following complete list of equivalence classes 
of higher rank graded simple transitive $2$-representations of 
$\dihedralcatbig{n}$, for $n\in\Z_{>1}$:
\begin{enumerate}[label=(\roman*)]

\setlength\itemsep{.15cm}

\item For odd $n$, there is just one, since the opposite coloring of 
a type $\typeA$ bipartite graph gives an isomorphic 
bipartite graph in this case.

\item For even $n\not\in\{2,4,12,18,30\}$, there are four, since the opposite colorings 
of $\typeA$ and $\typeD$ graphs give non-isomorphic 
bipartite graphs.

\item For $n\in\{2,4\}$, there are two, since the corresponding
type $\typeD$ graphs are of type $\typeA$ in these cases.

\item For $n\in\{12,18,30\}$, there are six, since there 
are two additional non-isomorphic bipartite graphs
of type $\typeE$ in each case.
\end{enumerate}

\begin{examplen}\label{example:cases-count-gg}
The spectrum is a full graph invariant for type $\ADE$ graphs (see e.g. \eqref{eq:spectrum}). 
Thus, in order to check if two inequivalent graded simple 
transitive $2$-re\-presentations of 
$\dihedralcatbig{n}$ decategorify to isomorphic $\mathrm{H}_n$-modules, we only need to compare their two-colorings. 
Outside types $\typee{6}$ and $\typee{8}$ nothing interesting 
happens. 
But the two two-colorings in \fullref{example-bigraph} give inequivalent bipartite graphs which are 
spectrum-color-equivalent. Therefore, the corresponding graded simple transitive $2$-re\-pre\-sen\-ta\-tions of $\dihedralcatbig{12}$ are inequivalent, but decategorify to isomorphic $\mathrm{H}_{12}$-modules 
(see also \fullref{example:SVD}). The same holds for the two type $\typee{8}$ graded simple transitive $2$-representations of $\dihedralcatbig{30}$.

In the infinite case the story is more delicate. 
As stated above, for all bipartite graphs, i.e. 
not necessarily of $\ADE$ type, we can construct graded simple 
transitive $2$-representations of 
$\dihedralcatbig{\infty}$, 
cf. \fullref{subsub:functor-bi-graphs}. 
By a classical result of Schwenk \cite{Sch1}, ``almost all'' trees are 
not determined by their 
spectrum -- in the sense that there are other, non-isomorphic trees with the same 
spectrum. 
However, a lot of them will be spectrum-color-equivalent. Thus, already for trees there are 
plenty of examples of inequivalent graded simple transitive $2$-representations of 
$\dihedralcatbig{\infty}$ which decategorify to isomorphic $\mathrm{H}_{\infty}$-modules, e.g.  
\begin{gather*}
\begin{aligned}
\g&=
\begin{tikzpicture}[anchorbase, scale=1]
	\draw [thick] (0,0) to (7,0);
	\draw [thick] (1,0) to (1,2);
	\draw [thick] (2,0) to (2,1);
	\node at (0,-.01) {\Large $\bullets$};
	\node at (1,-.01) {\Large $\bullett$};
	\node at (2,-.01) {\Large $\bullets$};
	\node at (3,-.01) {\Large $\bullett$};
	\node at (4,-.01) {\Large $\bullets$};
	\node at (5,-.01) {\Large $\bullett$};
	\node at (6,-.01) {\Large $\bullets$};
	\node at (7,-.01) {\Large $\bullett$};
	\node at (1,.99) {\Large $\bullets$};
	\node at (1,1.99) {\Large $\bullett$};
	\node at (2,.99) {\Large $\bullett$};
\end{tikzpicture},\\
\g^{\prime}&=
\begin{tikzpicture}[anchorbase, scale=1]
	\draw [thick] (0,0) to (6,0);
	\draw [thick] (1,0) to (1,2);
	\draw [thick] (4,0) to (4,2);
	\node at (0,-.01) {\Large $\bullett$};
	\node at (1,-.01) {\Large $\bullets$};
	\node at (2,-.01) {\Large $\bullett$};
	\node at (3,-.01) {\Large $\bullets$};
	\node at (4,-.01) {\Large $\bullett$};
	\node at (5,-.01) {\Large $\bullets$};
	\node at (6,-.01) {\Large $\bullett$};
	\node at (1,.99) {\Large $\bullett$};
	\node at (1,1.99) {\Large $\bullets$};
	\node at (4,.99) {\Large $\bullets$};
	\node at (4,1.99) {\Large $\bullett$};
\end{tikzpicture}.
\hspace{4.125cm}
\raisebox{-1.00cm}{\makeqedtri}
\hspace{-4.125cm}
\end{aligned}
\end{gather*}
\end{examplen}
\section{Graded \texorpdfstring{$2$}{2}-representations}\label{sec:2reps}

For us, Mazorchuk and Miemietz's setting 
(see e.g. \cite{MM1}, \cite{MM3} or \cite{MM5}) with finitary 
$2$-categories and strict $2$-representations is too restrictive. 

The two-color 
Soergel calculus is defined over the graded algebra of polynomials on the geometric representation of 
$\mathrm{W}$ (\textit{the polynomial algebra}, for short), which is finite-dimensional 
in each degree, but infinite-dimensional as a whole. In contrast, Mazorchuk--Miemietz 
always work over the so-called \textit{coinvariant algebra}, which is a finite-dimensional quotient of the polynomial algebra. 
This quotient inherits a grading, but they do not use it. 

Further, they consider strict $2$-representations. We will define our 
$2$-representations 
using the two-color Soergel calculus over the polynomial algebra and then prove that 
they descend to a quotient defined over the coinvariant algebra. This is 
done by prescribing the image of each $1$-morphism and each generating $2$-morphism, and checking the 
diagrammatic relations. On the level 
of $1$-morphisms these $2$-representations 
sometimes preserve composition only up to natural $2$-isomorphisms, i.e. 
our $2$-representations are given by weak $2$-functors 
(also called non-strict or pseudo). Fortunately, every weak $2$-representation 
in our sense can be strictified (see \fullref{remark:strictification}) and 
e.g. the classification results from \cite{KMMZ} remain true in our setup.

In the following abstract setup we work over a field $\someR$ for simplicity.

\subsection{Some basics about (graded finitary) \texorpdfstring{$2$}{2}-categories}\label{subsec:basics}

We use strict $2$-categories -- which we simply call $2$-categories -- and bicategories. We use strict and weak $2$-functors, which are carefully specified in each case.   

Let $\category{C}^\star$ be an additive, graded, $\someR$-linear $2$-category, i.e. a category enriched over 
the category of additive, graded, $\someR$-linear categories. 
We always assume that $\category{C}^\star$ has 
finitely many objects, up to equivalence, and that 
its $2$-morphism spaces are locally finite, i.e. finite-dimensional in each degree 
with the grading bounded from below. 

Moreover, in our setup, the $1$-morphisms admit grading shifts. 
Let $X\{\shiftme{a}\}$ denote a given $1$-morphism $X$ shifted $\shiftme{a}\in\Z$ 
degrees such that the identity $2$-morphism on $X$ gives rise to a homogeneous 
$2$-isomorphism $X\Rightarrow X\{\shiftme{a}\}$ of degree $\shiftme{a}$. In general, the $2$-morphisms in $\category{C}^\star$ 
are $\someR$-linear combinations of homogeneous ones, where a homogeneous  
$2$-morphism from $X$ to $Y$ of degree $d$ becomes 
homogeneous of degree $d+\shiftme{b}-\shiftme{a}$ when seen as a 
$2$-morphism from $X\{\shiftme{a}\}$ to $Y\{\shiftme{b}\}$.

There is a $2$-subcategory $\category{C}$ of $\category{C}^\star$ which has the same objects and $1$-morphisms, but only contains the degree-preserving $2$-morphisms. 
It still has the degree shifts on $1$-morphisms, but the $2$-morphism spaces are 
no longer graded. In $\category{C}$ the $1$-morphisms $X\{\shiftme{a}\}$ 
and $X\{\shiftme{b}\}$ are in general only isomorphic for $\shiftme{a}=\shiftme{b}$. 

One can recover $\category{C}^\star$ from $\category{C}$, because 
(for any $1$-morphisms $X,Y$ in $\category{C}^\star$ or $\category{C}$):
\[
\twoHom_{\category{C}^\star}(X,Y)
=
{\textstyle\bigoplus_{\shiftme{a}\in\Z}}\,
\twoHom_{\category{C}}(X\{\shiftme{a}\},Y).
\]

We also assume that, for each pair of 
objects $x,y$, the hom-category 
$\Hom_{\category{C}}(x,y)$ is idempotent complete and Krull-Schmidt. (In the diagrammatic Soergel $2$-categories, we therefore have to take the 
Karoubi envelope of each hom-category.) We also assume that the identity $1$-morphisms are indecomposable. 

If the split Grothendieck group of $\category{C}$ has finite 
rank over $\Z[\vpar,\vpar^{-1}]$, where $\vpar$ corresponds to the degree shift $\{\shiftme{1}\}$, we say that 
$\category{C}^\star$ is {\em graded finitary}. If, additionally, 
the $2$-morphism spaces are finite-dimensional, we say that 
$\category{C}^\star$ is {\em graded $2$-finitary}. 

If the split Grothendieck group of $\category{C}$ has countably 
infinite $\Z[\vpar,\vpar^{-1}]$-rank, we say that $\category{C}^\star$ is {\em graded locally finitary}. 
If, additionally, the $2$-morphism spaces are finite-dimensional, we say that 
$\category{C}^\star$ is {\em graded locally $2$-finitary}. 

We also use {\em graded $1$-finitary} categories, whose $1$-morphism spaces 
are graded and finite-dimensional.

\begin{example}\label{example:main-2-cats-first}
The Soergel bimodules of any Coxeter type, when really defined using bimodules over the polynomial algebra $\polyalg$, form a bicategory. This is easy to see, e.g. $\polyalg\otimes_{\polyalg} \polyalg$ is only isomorphic to $\polyalg$ and not equal to it. Moreover, it is not a small $2$-category, e.g. the isomorphism class of the polynomial algebra $\polyalg$ in this $2$-category is not a set. Luckily, any bicategory is weakly equivalent to a $2$-category -- as follows from a strictification theorem due to Mac Lane (see e.g. \cite[Section XI.3]{ML1}) for monoidal categories and to B{\'e}nabou \cite{Ben1} for bicategories. (Alternatively, see \cite[Theorem 2.3]{Lei1}.) And in our case, the $2$-categories are small. 

However, for our purposes we need a concrete version of such $2$-categories. 
Thus, we use the Karoubi envelope of the two-color Soergel calculi $\dihedralcat{n}$ and $\dihedralcat{\infty}$, respectively, which we recall in 
\fullref{subsec:diacat}. As we will see 
in \fullref{proposition:locally-finitary}, $\Kar(\dihedralcat{n})^\star$ and 
$\Kar(\dihedralcat{\infty})^\star$ are graded finitary and graded locally finitary $2$-categories.

The two-color Soergel calculi are defined over the polynomial algebra, but admit quotients to $2$-categories defined over the coinvariant algebra, denoted by $\dihedralcat{n}^{\mathrm{f}}$ and $\dihedralcat{\infty}^{\mathrm{f}}$. 
We explain this more carefully later -- in 
the proof of part (iv) of \fullref{theorem:main-proposition}. By \fullref{proposition:locally-finitary} and by construction, $\Kar(\dihedralcat{n}^{\mathrm{f}})^\star$ and $\Kar(\dihedralcat{\infty}^{\mathrm{f}})^\star$ are graded $2$-finitary and graded 
locally $2$-finitary $2$-categories, respectively.
\end{example}

\begin{examplen}\label{example:main-2-cats}
Let $\somealg$ be a graded, finite-dimensional algebra. Then its category of graded 
projective, finite-dimensional (left) $\somealg$-modules 
$\Modl{\somealg}$ is an additive, graded 
$1$-finitary, $\someR$-linear category 
which is idempotent complete and Krull-Schmidt. We view it as being 
small by taking equivalence classes of $\somealg$-modules.

We call a finite-dimensional $\somealg$-bimodule $X$
\textit{biprojective} if it is projective as a left and as a right $\somealg$-module (but not necessarily as an $\somealg$-bimodule). Given a graded biprojective, finite-dimensional $\somealg$-bimodule $X$. Then $X\otimes_{\somealg}-$ gives rise to an exact endofunctor of $\Modl{\somealg}$. An endofunctor of $\Modl{\somealg}$ is called {\em biprojective}, if it is isomorphic to a direct summand of a finite direct sum
of endofunctors of the form $X\otimes_{\somealg}-$, where $X$ is a biprojective $\somealg$-bimodule.

Let $\Endf(\Modl{\somealg})$ be the $2$-category with the unique object $\Modl{\somealg}$, whose $1$-morphisms are biprojective endofunctors on $\Modl{\somealg}$, and 
whose $2$-mor\-phisms are degree-preserving natural transformations between these. 
Hereby recall that the Godement product induces the horizontal composition $\hcomp$ via:
\begin{gather}\label{eq:Godement}
\begin{aligned}
\Functor{\mathcal{F}},\Functor{\mathcal{G}}\colon X\to Y, \quad\quad 
\Functor{\mathcal{H}},\Functor{\mathcal{I}}\colon Y\to Z, \quad\quad 
\nattrafo{f}\colon\Functor{\mathcal{F}}\Rightarrow\Functor{\mathcal{G}},\quad\quad
\nattrafo{g}\colon\Functor{\mathcal{H}}\Rightarrow\Functor{\mathcal{I}},\\
\nattrafo{g}\hcomp\nattrafo{f}\colon
\Functor{\mathcal{H}}\Functor{\mathcal{F}}
\Rightarrow\Functor{\mathcal{I}}\Functor{\mathcal{G}},
\quad
(\nattrafo{g}\hcomp\nattrafo{f})_X=
\nattrafo{g}_{\mathcal{G}(X)}\circ\Functor{\mathcal{H}}(\nattrafo{f}_X)
=\Functor{\mathcal{I}}(\nattrafo{f}_X)\circ\nattrafo{g}_{\mathcal{F}(X)}.
\hspace{1.975cm}
\raisebox{-.1cm}{\makeqedtri}
\hspace{-1.975cm}
\end{aligned}
\end{gather}
\end{examplen}

\begin{remark}\label{remark:strict}
All diagrammatic $2$-categories which appear in this paper, e.g. 
$\dihedralcat{n}$ as in \fullref{subsec:diacat}, are, by definition, strict.
In contrast, $\Endf(\Modl{\somealg})$, introduced in \fullref{example:main-2-cats}, is a bicategory. 
Still, we always view it as being strict by using the B{\'e}nabou--Mac Lane coherence theorem.
\end{remark}
\subsection{Graded simple transitive \texorpdfstring{$2$}{2}-representations}\label{subsec:basic-2cat-reps}

Our next goal is to  
define a graded and weak version of 
certain $2$-representations due 
to Mazorchuk--Miemietz, see e.g. \cite{MM3} or \cite{MM5} where more details can be found.

\subsubsection{\texorpdfstring{$2$}{2}-representations: definitions}\label{subsub:graded-2reps}

Let $\boldsymbol{\mathfrak{A}}^f_{\mathrm{gr}}$ be the $2$-category 
whose objects are additive, graded $1$-finitary, 
$\someR$-linear (small) categories; $1$-morphisms are additive, 
degree-preserving, $\someR$-linear functors; $2$-morphisms 
are homogeneous degree-zero natural transformations. 
For example, $\Modl{\algG}$ is an object of $\boldsymbol{\mathfrak{A}}^f_{\mathrm{gr}}$.

\begin{definitionn}\label{definition:MM2reps}
Let $\category{C}^\star$ be a graded (locally) ($2$-)finitary $2$-category. Then a {\em graded $2$-finitary, weak $2$-representation} of $\category{C}^\star$ is an additive, $\someR$-linear, weak  $2$-functor 
\[
\mathbf{M}\colon \category{C}^\star\to \left(\boldsymbol{\mathfrak{A}}^f_{\mathrm{gr}}\right)^\star
\]
which preserves degrees and commutes with shifts as follows. For any indecomposable $1$-morphism $F$ in $\category{C}^\star$ and any indecomposable object $X\in \coprod_{x}\mathbf{M}(x)$ 
(note that objects in $\coprod_{x}\mathbf{M}(x)$ are $1$-morphisms 
in the target $2$-category) we have 
\[
\mathbf{M}(F\{\shiftme{a}\})(X\{\shiftme{b}\})=\mathbf{M}(F)(X)\{\shiftme{a+b}\},\qquad \shiftme{a},\shiftme{b}\in\Z.
\hspace{3.25cm}
\raisebox{-.01cm}{\makeqedtri}
\hspace{-3.25cm}
\]
\end{definitionn}

These form a bicategory, whose $1$-morphisms are weak natural transformations, with degree-zero structural $2$-isomorphisms across the squares in their definition, and whose $2$-morphisms are degree-zero modifications. 

\begin{remark}\label{remark:weak}
We require a graded $2$-finitary, weak $2$-representation to be a weak  $2$-functor 
which preserves degrees. 
Such a $2$-functor restricts to, and is uniquely determined by, an additive, $\someR$-linear, weak  $2$-functor $\mathbf{M}\colon \category{C}\to \boldsymbol{\mathfrak{A}}^f_{\mathrm{gr}}$. We will use both weak  $2$-functors almost interchangeably.  
\end{remark}

Mazorchuk--Miemietz \cite{MM5} defined simple 
transitive, strict $2$-representations, which are a 
categorical analogue of simple representations of finite-dimensional algebras. Their 
definition remains (almost) unchanged in the our setting. 

\begin{definition}\label{definition:simple-transitive-2reps} 
We say that a graded $2$-finitary, weak $2$-representation $\mathbf{M}$ of a 
graded (locally) ($2\text{-})$finitary $2$-category $\category{C}^\star$ is 
{\em transitive} if for any two indecomposable objects 
$X,Y$ in $\coprod_{x}\mathbf{M}(x)$ there exists a 
$1$-morphism $F$ in $\category{C}^\star$ such that $Y$ is isomorphic to a graded direct summand of $\mathbf{M}(F)(X)$. 
It is called {\em graded simple transitive} if, 
additionally, $\coprod_{x}\mathbf{M}(x)$ has no non-zero 
proper $\category{C}^\star$-invariant ideals.
\end{definition}

(We say ``graded simple transitive'' and omit the 
``weak'', cf. \fullref{remark:strictification}.)

\begin{remark}\label{remark:weak2}
Suppose that $Y\{\shiftme{a}\}$ is isomorphic to a direct summand of $\mathbf{M}(F)(X)$, for some $\shiftme{a}\in\Z$. 
Then $Y$ is isomorphic to a direct summand of 
$\mathbf{M}(F\{\shiftme{-a}\})(X)$. 
\end{remark}

\begin{example}\label{example:main-2-cats-2-reps}
Let $\onecolor$ denote the one-color (say {\color{sea}sea-green} $\dicatsgen$)
Soergel calculus of Coxeter type $\typea{1}$. 
This is defined as $\dihedralcat{n}$ in \fullref{subsec:diacat}, 
but dropping the second color (say {\color{tomato}tomato} $\dicattgen$) and the relations in which it is involved. 
This $2$-category has one object (which we do not specify here). 
The $1$-morphisms are formal direct sums of finite words (``tensor products'') of shifts of $\dicatsgen$, 
and $2$-morphisms are degree zero Soergel diagrams. The $1$-morphisms 
might not be indecomposable, e.g. we have  $\dicatsgen\dicatsgen\cong\dicatsgen\{\shiftme{-1}\}\oplus\dicatsgen\{\shiftme{+1}\}$. 
In particular, $\onecolor$ is an additive, $\someR$-linear $2$-category and $\onecolor\cong\Kar(\onecolor)$ is 
idempotent complete and Krull-Schmidt. Finally, 
$\Kar(\onecolor)^\star$ is a graded finitary 
$2$-category. 

Now, consider the coinvariant algebra $\coalg^+_{\typea{1}}=\dualalg$ of the 
Weyl group of type $\typea{1}$, the so-called
dual numbers 
$\dualalg\cong\someR[X]/(X^2)$ -- with $X$ of degree two. Then the $2$-category 
$\onecolor^{\mathrm{f}}$ which is defined over the coinvariant algebra, is a quotient of $\onecolor$. The ${}^\star$ 
of its Karoubi envelope $\Kar(\onecolor^\mathrm{f})^\star$ is a graded $2$-finitary $2$-category.  

We can define a graded $2$-finitary, weak $2$-representation of $\Kar(\onecolor)^{\star}$ 
on the category $(\Modl{\dualalg})^\star$, by sending 
the empty word $\emptyset$ to the endofunctor $\dualalg\otimes_{\dualalg} -$ and 
$\dicatsgen$ to the endofunctor $\dualalg\otimes \dualalg\otimes_{\dualalg} -$. Note 
that $\dualalg$ and $\dualalg\otimes \dualalg$ are graded biprojective, 
$\dualalg$-bimodules. This $2$-representation 
is simple transitive, see \cite[Section 3.4]{MM}, and also graded.
By construction, it descends to $\Kar(\onecolor^{\mathrm{f}})^\star$.

In the setup of bipartite graphs, this $2$-representation is given by a  
graph with one {\color{sea}sea-green s} colored vertex $\bullets$. One can check that there is no other 
graded simple transitive $2$-re\-presentation for the corresponding small quotient.  
\end{example}

\subsubsection{\texorpdfstring{$2$}{2}-representations: strictifications}\label{subsub:strict-stuff}

Let $\category{C}$ be a small $2$-category, $\boldsymbol{\mathrm{Cat}}$ the 
$2$-ca\-te\-gory of small categories and 
$\functor{F}_{\mathrm{w}}\colon\category{C}\to\boldsymbol{\mathrm{Cat}}$ any 
weak $2$-functor. 
By \cite[Section 4.2]{Po1} this $2$-fun\-ctor can be strictified: There exists a 
strict $2$-functor $\functor{F}_{\mathrm{s}}\colon\category{C}\to\boldsymbol{\mathrm{Cat}}$ 
and a weak natural $2$-iso\-mor\-phism $\nattrafo{f}\colon\functor{F}_{\mathrm{w}}\Rightarrow\functor{F}_{\mathrm{s}}$. Moreover, given two such weak 
$2$-functors $\functor{F}_{\mathrm{w}}$ and $\functor{G}_{\mathrm{w}}$, the $2$-categories 
of weak -- respectively strict -- 
natural transformations and modifications between $\functor{F}_{\mathrm{w}}$ 
and $\functor{G}_{\mathrm{w}}$, respectively between $\functor{F}_{\mathrm{s}}$ 
and $\functor{G}_{\mathrm{s}}$, are equivalent by conjugation with $\nattrafo{f}$ and $\nattrafo{g}$. 

A close inspection of Power's arguments shows that the same holds when $\boldsymbol{\mathrm{Cat}}$ is replaced by $\boldsymbol{\mathfrak{A}}^f_{\mathrm{gr}}$, as long as $\category{C}$ is graded (locally) ($2$-)finitary with finitely many objects. 

(We thank Nick Gurski for explaining to us the relevance of $2$-monads for strictification and giving us some pointers to the literature.)

\begin{remark}\label{remark:strictification}
Note that the two-color Soergel calculi (and their Karoubi envelopes), 
in the finite and the infinite case, are $2$-categories with finitely many objects.
Thus, by the above, the (graded simple transitive) weak $2$-representations 
in this paper are weakly equivalent to (graded simple transitive) strict $2$-representations.
\end{remark}
\section{The two-color Soergel calculus and its \texorpdfstring{$2$}{2}-action}\label{sec:dicat}
Next, we construct the $2$-representations from \fullref{theorem:main-theorem}. 
\subsection{Soergel calculus in two colors}\label{subsec:diacat}

Fix $n\in\Z_{>0}$ or ``$n=\infty$''.
We recall Elias' two-color Soergel calculi  
$\dihedralcatbig{n}$ and  
$\dihedralcatbig{\infty}$, as defined in \cite[Sections 5 and 6]{El1}. 
We write $\dihedralcatbig{}$ if no confusion can arise, 
and also use ``unstarred'' versions $\dihedralcat{}$ of these.

\begin{remark}\label{remark:ground-ring}
Let us now fix the technical conditions defining our ground rings.

For weak categorifications, 
as in \fullref{subsub:di-modules-bipartite}, we can work 
over $\Zq=\Z[\qpar,\qpar^{-1}]$ for a fixed (generic) $\qpar\in\C-\{0\}$. 

For strong categorifications, 
as in \fullref{subsec:main-theorems}, we have to adjoin 
the inverse of some quantum integers as well. For $\g$ of $\ADE$ type with Coxeter 
number $n>2$ (cf. \ref{enum:typeA}, \ref{enum:typeD} or \ref{enum:typeE}), let 
$\qpar$ be a complex, primitive $2n$th root of unity and let
\begin{gather}\label{eq:aform}
\ZQ=
\Z[\scalebox{.9}{$\nicefrac{$1$}{$2$}$},\qpar,\qpar^{-1}, 
\scalebox{.9}{$\nicefrac{$1$}{$[2]_\qpar$}$},\dots,
\scalebox{.9}{$\nicefrac{$1$}{$[n{-}1]_\qpar$}$}].
\end{gather}
For $n=1$ and $n=2$ we let $\ZQ=\Z[\scalebox{.9}{$\nicefrac{$1$}{$2$}$}]$ 
and $\Z[\scalebox{.9}{$\nicefrac{$1$}{$2$}$},\pm \sqrt{-1}]$, respectively. 
Hereby recall that the quantum integers are defined by
\[
[z]_{\qpar}=\qpar^{z-1}+\qpar^{z-3}+\dots+\qpar^{3-z}+\qpar^{1-z},\; 
z\in\Z_{>0},\quad [0]_{\qpar}=0.
\]

For more general bipartite graphs we use $\zgl$ as in 
\fullref{subsub:functor-bi-graphs}, which is obtained from $\Z$ 
by adjoining a finite number of complex numbers and their inverses. 
To be precise, we adjoin the \eqref{eq:barb2}-weightings as in \fullref{subsub:functor-infty-2}.
\end{remark}

The definition of those calculi depends on a fixed parameter $\qpar\in\C-\{0\}$, 
which is as in \fullref{subsec:quiver-stuff} and \fullref{remark:ground-ring}. 
Our ground ring is again the appropriate $\zq$.

\subsubsection{Soergel diagrams}\label{subsub:SDs}

We consider the following generating $2$-morphisms.
\renewcommand{\theequation}{\arabic{section}.$\dihedralcat{}$gen1}
\begin{gather}\label{eq:dicat-gen1}
\begin{aligned}
\begin{tikzpicture}[anchorbase, scale=.3]
	\draw [very thick, sea] (0,-2) to (0,2);
	\node at (0,-2.35) {\tiny $\dicatsgen$};
	\node at (0,2.35) {\tiny $\dicatsgen$};
	\node at (0,-3) {\tiny $\text{deg}=0$};
\end{tikzpicture}
\quad,\quad
\begin{tikzpicture}[anchorbase, scale=.3]
	\draw [very thick, tomato] (0,-2) to (0,2);
	\node at (0,-2.35) {\tiny $\dicattgen$};
	\node at (0,2.35) {\tiny $\dicattgen$};
	\node at (0,-3) {\tiny $\text{deg}=0$};
\end{tikzpicture}
\quad,\quad
\begin{tikzpicture}[anchorbase, scale=.3]
	\draw [very thick, sea] (0,-2) to (0,0);
	\node at (0,-2.35) {\tiny $\dicatsgen$};
	\node at (0,2.35) {\tiny $\phantom{a}$};
	\node at (0,0) {\Large $\bullets$};
	\node at (0,-3) {\tiny $\text{deg}=1$};
\end{tikzpicture}
&\quad,\quad
\begin{tikzpicture}[anchorbase, scale=.3]
	\draw [very thick, sea] (0,2) to (0,0);
	\node at (0,2.35) {\tiny $\dicatsgen$};
	\node at (0,-2.35) {\tiny $\phantom{a}$};
	\node at (0,0) {\Large $\bullets$};
	\node at (0,-3) {\tiny $\text{deg}=1$};
\end{tikzpicture}
\quad,\quad
\begin{tikzpicture}[anchorbase, scale=.3]
	\draw [very thick, tomato] (0,-2) to (0,0);
	\node at (0,-2.35) {\tiny $\dicattgen$};
	\node at (0,2.35) {\tiny $\phantom{a}$};
	\node at (0,0) {\Large $\bullett$};
	\node at (0,-3) {\tiny $\text{deg}=1$};
\end{tikzpicture}
\quad,\quad
\begin{tikzpicture}[anchorbase, scale=.3]
	\draw [very thick, tomato] (0,2) to (0,0);
	\node at (0,2.35) {\tiny $\dicattgen$};
	\node at (0,-2.35) {\tiny $\phantom{a}$};
	\node at (0,0) {\Large $\bullett$};
	\node at (0,-3) {\tiny $\text{deg}=1$};
\end{tikzpicture},
\\
\begin{tikzpicture}[anchorbase, scale=.3]
	\draw [very thick, sea] (0,-2) to (0,0) to (-2,2);
	\draw [very thick, sea] (0,0) to (2,2);
	\node at (0,-2.35) {\tiny $\dicatsgen$};
	\node at (-2,2.35) {\tiny $\dicatsgen$};
	\node at (2,2.35) {\tiny $\dicatsgen$};
	\node at (0,-3) {\tiny $\text{deg}=-1$};
\end{tikzpicture}
\quad,\quad
\begin{tikzpicture}[anchorbase, scale=.3]
	\draw [very thick, sea] (0,2) to (0,0) to (-2,-2);
	\draw [very thick, sea] (0,0) to (2,-2);
	\node at (0,2.35) {\tiny $\dicatsgen$};
	\node at (-2,-2.35) {\tiny $\dicatsgen$};
	\node at (2,-2.35) {\tiny $\dicatsgen$};
	\node at (0,-3) {\tiny $\text{deg}=-1$};
\end{tikzpicture}
&\quad,\quad
\begin{tikzpicture}[anchorbase, scale=.3]
	\draw [very thick, tomato] (0,-2) to (0,0) to (-2,2);
	\draw [very thick, tomato] (0,0) to (2,2);
	\node at (0,-2.35) {\tiny $\dicattgen$};
	\node at (-2,2.35) {\tiny $\dicattgen$};
	\node at (2,2.35) {\tiny $\dicattgen$};
	\node at (0,-3) {\tiny $\text{deg}=-1$};
\end{tikzpicture}
\quad,\quad
\begin{tikzpicture}[anchorbase, scale=.3]
	\draw [very thick, tomato] (0,2) to (0,0) to (-2,-2);
	\draw [very thick, tomato] (0,0) to (2,-2);
	\node at (0,2.35) {\tiny $\dicattgen$};
	\node at (-2,-2.35) {\tiny $\dicattgen$};
	\node at (2,-2.35) {\tiny $\dicattgen$};
	\node at (0,-3) {\tiny $\text{deg}=-1$};
\end{tikzpicture}.
\end{aligned}
\end{gather}
\renewcommand{\theequation}{\arabic{section}.\arabic{equation}}
(Recall that the objects $\dicatsgen$ and $\dicattgen$ correspond to $\disgen$ and $\ditgen$ of $\mathrm{H}$, and not to the Coxeter generators 
$\dihsgen$ and $\dihtgen$ of $\mathrm{W}$, see e.g. \cite[Theorems 5.29 and 6.24]{El1}.)

We give these $2$-morphisms the indicated degrees, and call them \textit{identities}, 
(\textit{end} and \textit{start}) \textit{dots}, and \textit{trivalent vertices} 
(\textit{split} and \textit{merge}) respectively. 

All generators in \eqref{eq:dicat-gen1} are independent of $n$. The following degree-zero generator depends on $n$ and is called the \textit{$2n$-vertex}:
\renewcommand{\theequation}{\arabic{section}.$\dihedralcat{}$gen2}
\begin{gather}\label{eq:dicat-gen2}
\raisebox{-.1cm}{$\overbrace{\begin{tikzpicture}[anchorbase, scale=.3]
	\draw [very thick, tomato] (2,2) to (0,0) to (-2,2);
	\draw [very thick, tomato] (6,2) to (0,0);
	\draw [very thick, tomato] (0,-2) to (0,0);
	\draw [very thick, tomato] (4,-2) to (0,0) to (-4,-2);
	\draw [very thick, sea] (2,-2) to (0,0) to (-2,-2);
	\draw [very thick, sea] (6,-2) to (0,0);
	\draw [very thick, sea] (0,2) to (0,0);
	\draw [very thick, sea] (4,2) to (0,0) to (-4,2);
	\draw [very thick, orchid] (-6,2) to (0,0) to (-6,-2);
	\node at (-6,2.35) {\tiny $\dicattsgen$};
	\node at (-4,2.35) {\tiny $\dicatsgen$};
	\node at (-2,2.35) {\tiny $\dicattgen$};
	\node at (0,2.35) {\tiny $\dicatsgen$};
	\node at (2,2.35) {\tiny $\dicattgen$};
	\node at (4,2.35) {\tiny $\dicatsgen$};
	\node at (6,2.35) {\tiny $\dicattgen$};
	\node at (-6,-2.35) {\tiny $\dicatstgen$};
	\node at (-4,-2.35) {\tiny $\dicattgen$};
	\node at (-2,-2.35) {\tiny $\dicatsgen$};
	\node at (0,-2.35) {\tiny $\dicattgen$};
	\node at (2,-2.35) {\tiny $\dicatsgen$};
	\node at (4,-2.35) {\tiny $\dicattgen$};
	\node at (6,-2.35) {\tiny $\dicatsgen$};
	\node at (-5,-2.35) {\tiny $\,\cdots$};
	\node at (-5,2.35) {\tiny $\,\cdots$};
	\node at (0,-3) {\tiny $\text{deg}=0$};
\end{tikzpicture}}^n$}
,\quad\quad
\raisebox{-.075cm}{$\begin{tikzpicture}[anchorbase, scale=.3]
	\draw [very thick, orchid] (-1,-2) to (-1,-1.5);
	\draw [very thick, sea] (1,-2) to (1,-1.5);
	\draw [very thick, orchid] (-1,2) to (-1,1.5);
	\draw [very thick, tomato] (1,2) to (1,1.5);
	\draw [thick] (-1.5,-1.5) rectangle (1.5,1.5);
	\node at (-1,2.35) {\tiny $\dicattsgen$};
	\node at (1,2.35) {\tiny $\dicattgen$};
	\node at (-1,-2.35) {\tiny $\dicatstgen$};
	\node at (1,-2.35) {\tiny $\dicatsgen$};
	\node at (0,-2.35) {\tiny $\,\cdots$};
	\node at (0,2.35) {\tiny $\,\cdots$};
	\node at (0,0) {\tiny $n$};
	\node at (0,-3) {\tiny $\text{shorthand}$};
\end{tikzpicture}$}.
\end{gather}
\renewcommand{\theequation}{\arabic{section}.\arabic{equation}}
Here $\dicatstgen$ is either $\dicatsgen$ or $\dicattgen$, 
and $\dicattsgen$ is the opposite, depending on $n$.
The color inverted ($\dicatsgen\rightleftarrows\dicattgen$) version of the $2n$-vertex 
in \eqref{eq:dicat-gen2} is our last generator. Throughout 
the paper, we will use the shorthand for the $2n$-vertices as in \eqref{eq:dicat-gen2}.

The vertical composition $g\vcomp f$ is given by gluing 
diagram $g$ on top of diagram $f$ (in case the colors match), the horizontal composition 
$g\hcomp f$ by putting $g$ to the left of $f$. Our conventions 
are best illustrated in an example.

\begin{example}\label{example:reading-conventions}
For instance, in case $n=3$:
\[
\begin{tikzpicture}[anchorbase, scale=.3]
	\draw [very thick, tomato] (2,-2) to (0,0) to (-2,-2);
	\draw [very thick, tomato] (0,2) to (0,0);
	\draw [very thick, sea] (2,2) to (0,0) to (-2,2);
	\draw [very thick, sea] (0,-2) to (0,0);
	\node at (-2,2.35) {\tiny $\dicatsgen$};
	\node at (0,2.35) {\tiny $\dicattgen$};
	\node at (2,2.35) {\tiny $\dicatsgen$};
	\node at (-2,-2.35) {\tiny $\dicattgen$};
	\node at (0,-2.35) {\tiny $\dicatsgen$};
	\node at (2,-2.35) {\tiny $\dicattgen$};
	\draw [very thick, sea] (4,2) to (4,1);
	\draw [very thick, sea] (4,-2) to (4,-1);
	\node at (4,-1) {\Large $\bullets$};
	\node at (4,1) {\Large $\bullets$};
	\node at (4,2.35) {\tiny $\dicatsgen$};
	\node at (4,-2.35) {\tiny $\dicatsgen$};
\end{tikzpicture}
\vcomp
\left(
\begin{tikzpicture}[anchorbase, scale=.3]
	\draw [very thick, tomato] (0,2) to (0,1);
	\draw [very thick, tomato] (0,-2) to (0,-1);
	\node at (0,-1) {\Large $\bullett$};
	\node at (0,1) {\Large $\bullett$};
	\node at (0,2.35) {\tiny $\dicattgen$};
	\node at (0,-2.35) {\tiny $\dicattgen$};
\end{tikzpicture}
\hcomp
\begin{tikzpicture}[anchorbase, scale=.3]
	\draw [very thick, tomato] (2,-2) to (0,0) to (-2,-2);
	\draw [very thick, tomato] (0,2) to (0,0);
	\draw [very thick, sea] (2,2) to (0,0) to (-2,2);
	\draw [very thick, sea] (0,-2) to (0,0);
	\node at (-2,2.35) {\tiny $\dicatsgen$};
	\node at (0,2.35) {\tiny $\dicattgen$};
	\node at (2,2.35) {\tiny $\dicatsgen$};
	\node at (-2,-2.35) {\tiny $\dicattgen$};
	\node at (0,-2.35) {\tiny $\dicatsgen$};
	\node at (2,-2.35) {\tiny $\dicattgen$};
\end{tikzpicture}
\right)
=
\begin{tikzpicture}[anchorbase, scale=.3]
	\draw [very thick, tomato] (2,-2) to (0,0) to (-2,-2);
	\draw [very thick, tomato] (0,2) to (0,0);
	\draw [very thick, sea] (2,2) to (0,0) to (-2,2);
	\draw [very thick, sea] (0,-2) to (0,0);
	\node at (-2,2.35) {\tiny $\dicatsgen$};
	\node at (0,2.35) {\tiny $\dicattgen$};
	\node at (2,2.35) {\tiny $\dicatsgen$};
	\draw [very thick, sea] (4,2) to (4,1);
	\draw [very thick, sea] (4,-2) to (4,-1);
	\node at (4,-1) {\Large $\bullets$};
	\node at (4,1) {\Large $\bullets$};
	\node at (4,2.35) {\tiny $\dicatsgen$};
	\draw [very thick, tomato] (-2,-2) to (-2,-3);
	\draw [very thick, tomato] (-2,-6) to (-2,-5);
	\node at (-2,-5) {\Large $\bullett$};
	\node at (-2,-3) {\Large $\bullett$};
	\node at (-2,-6.35) {\tiny $\dicattgen$};
	\draw [very thick, tomato] (4,-6) to (2,-4) to (0,-6);
	\draw [very thick, tomato] (2,-2) to (2,-4);
	\draw [very thick, sea] (4,-2) to (2,-4) to (0,-2);
	\draw [very thick, sea] (2,-6) to (2,-4);
	\node at (0,-6.35) {\tiny $\dicattgen$};
	\node at (2,-6.35) {\tiny $\dicatsgen$};
	\node at (4,-6.35) {\tiny $\dicattgen$};
\end{tikzpicture}
\colon
\dicattgen\dicatsgen\dicattgen\dicattgen
\to\dicatsgen\dicatsgen\dicattgen\dicatsgen.
\]
Here we have also indicated our reading conventions for Soergel diagrams.
\end{example}

\begin{definition}\label{definition:soergel-dia}
Given two words 
$\word,\word^{\prime}$ in the symbols $\dicatsgen$ and $\dicattgen$, 
a Soergel diagram from $\word$ to $\word^{\prime}$ is 
a diagram $\vcomp$-$\hcomp$-generated 
by the generators from \eqref{eq:dicat-gen1} and \eqref{eq:dicat-gen2} 
such that the outgoing edges correspond 
color-wise to the entries of $\word$, $\word^{\prime}$. 

The degree of a Soergel diagram is, by definition, the sum of the degrees 
of its generators. (The Soergel diagram $\varnothing\colon\emptyset\to\emptyset$ is, by convention, of degree zero.)
\end{definition}

The top and bottom of a Soergel diagram correspond to direct sums 
(we do not assume that words in $\dicatsgen$ and $\dicattgen$ 
are reduced) of 
so-called \textit{Bott--Samelson bimodules}, which categorify Bott--Samelson basis elements $\theta_{\overline{\word}}$, for $\word\in\mathrm{W}$, cf. \fullref{subsub:KL-stuff}. In general, the Bott--Samelson bimodules do not coincide with the indecomposable Soergel bimodules, which categorify the Kazhdan--Lusztig basis elements $\klbasis_{\word}$.

\begin{example}\label{example:pitchfork}
The following shorthand notations and their color inversion ($\dicatsgen\rightleftarrows\dicattgen$)
\[
\begin{tikzpicture}[anchorbase, scale=.3]
	\draw [very thick, sea] (0,2) to (1,0) to (2,2);
	\draw [very thick, sea] (1,0) to (1,-1);
	\node at (0,2.35) {\tiny $\dicatsgen$};
	\node at (2,2.35) {\tiny $\dicatsgen$};
	\node at (0,-2.35) {\tiny $\phantom{a}$};
	\node at (1,-1) {\Large $\bullets$};
\end{tikzpicture}
=
\begin{tikzpicture}[anchorbase, scale=.3]
	\draw [very thick, sea] (0,2) to [out=270, in=180] (1,0) to [out=0, in=270] (2,2);
	\node at (0,2.35) {\tiny $\dicatsgen$};
	\node at (2,2.35) {\tiny $\dicatsgen$};
	\node at (0,-2.35) {\tiny $\phantom{a}$};
\end{tikzpicture}
,\quad\quad
\begin{tikzpicture}[anchorbase, scale=.3]
	\draw [very thick, sea] (0,-2) to (1,0) to (2,-2);
	\draw [very thick, sea] (1,0) to (1,1);
	\node at (0,-2.35) {\tiny $\dicatsgen$};
	\node at (2,-2.35) {\tiny $\dicatsgen$};
	\node at (0,2.35) {\tiny $\phantom{a}$};
	\node at (1,1) {\Large $\bullets$};
\end{tikzpicture}
=
\begin{tikzpicture}[anchorbase, scale=.3]
	\draw [very thick, sea] (0,-2) to [out=90, in=180] (1,0) to [out=0, in=90] (2,-2);
	\node at (0,-2.35) {\tiny $\dicatsgen$};
	\node at (2,-2.35) {\tiny $\dicatsgen$};
	\node at (0,2.35) {\tiny $\phantom{a}$};
\end{tikzpicture},
\]
define cup and cap Soergel diagrams, which are of degree zero.
\end{example}

\subsubsection{``Dihedral Jones--Wenzl projectors''}\label{subsub:JWs}

From now on we write $\iddia$ for Soergel diagrams consisting 
of just identity strands.

\begin{definition}\label{definition:JW}
For $k\in\N$, we define $\JWs{k}$ 
to be the formal $\zq$-linear combination of 
Soergel diagram obtained as follows. 
Set $\JWs{0}=\varnothing$, $\JWs{1}=\iddia$ and
\begin{gather}\label{eq:JW}
\JWs{k}
=
\begin{tikzpicture}[anchorbase, scale=.3]
	\draw [very thick, sea] (5,1.5) to (5,2);
	\draw [very thick, tomato] (3,1.5) to (3,2);
	\draw [very thick, sea] (1,1.5) to (1,2);
	\draw [very thick, orchid] (-1,1.5) to (-1,2);
	\draw [very thick, orchid] (-3,1.5) to (-3,2);
	\draw [very thick, orchid] (-5,1.5) to (-5,2);
	\draw [very thick, orchid] (-7,1.5) to (-7,2);
	\draw [very thick, sea] (5,-2) to (5,-1.5);
	\draw [very thick, tomato] (3,-2) to (3,-1.5);
	\draw [very thick, sea] (1,-2) to (1,-1.5);
	\draw [very thick, orchid] (-1,-2) to (-1,-1.5);
	\draw [very thick, orchid] (-3,-2) to (-3,-1.5);
	\draw [very thick, orchid] (-5,-2) to (-5,-1.5);
	\draw [very thick, orchid] (-7,-2) to (-7,-1.5) to (-7,1.5);
	\draw [thick, dotted] (-5.5,-1.5) rectangle (5.5,1.5);
	\node at (5,-2.35) {\tiny $\dicatsgen$};
	\node at (5,2.35) {\tiny $\dicatsgen$};
	\node at (3,-2.35) {\tiny $\dicattgen$};
	\node at (3,2.35) {\tiny $\dicattgen$};
	\node at (1,-2.35) {\tiny $\dicatsgen$};
	\node at (1,2.35) {\tiny $\dicatsgen$};
	\node at (-1,-2.35) {\tiny $\dicatstgen$};
	\node at (-1,2.35) {\tiny $\dicatstgen$};
	\node at (-3,-2.35) {\tiny $\dicattsgen$};
	\node at (-3,2.35) {\tiny $\dicattsgen$};
	\node at (-5,-2.35) {\tiny $\dicatstgen$};
	\node at (-5,2.35) {\tiny $\dicatstgen$};
	\node at (-7,-2.35) {\tiny $\dicattsgen$};
	\node at (-7,2.35) {\tiny $\dicattsgen$};
	\node at (0,-2.35) {\tiny $\,\cdots$};
	\node at (0,2.35) {\tiny $\,\cdots$};
	\node at (0,0) {\tiny $\JWs{k{-}1}$};
\end{tikzpicture}
+
\tfrac{[k{-}2]_{\qpar}}{[k{-}1]_{\qpar}}\cdot
\begin{tikzpicture}[anchorbase, scale=.3]
	\draw [very thick, sea] (5,1.5) to (5,3);
	\draw [very thick, tomato] (3,1.5) to (3,3);
	\draw [very thick, sea] (1,1.5) to (1,3);
	\draw [very thick, orchid] (-1,1.5) to (-1,3);
	\draw [very thick, orchid] (-3,1.5) to (-3,2) to (-5,2.75);
	\draw [very thick, orchid] (-5,1.5) to (-5,2);
	\draw [very thick, orchid] (-7,1.5) to (-7,2) to (-5,2.75) to (-5,3) to (-5,3.25) to (-7,4) to (-7,4.5);
	\draw [very thick, orchid] (-5,3.25) to (-3,4) to (-3,4.5);
	\draw [very thick, orchid] (-5,4) to (-5,4.5);
	\draw [very thick, orchid] (-1,3) to (-1,4.5);
	\draw [very thick, sea] (1,3) to (1,4.5);
	\draw [very thick, tomato] (3,3) to (3,4.5);
	\draw [very thick, sea] (5,3) to (5,4.5);
	\draw [very thick, sea] (5,-2) to (5,-1.5);
	\draw [very thick, tomato] (3,-2) to (3,-1.5);
	\draw [very thick, sea] (1,-2) to (1,-1.5);
	\draw [very thick, orchid] (-1,-2) to (-1,-1.5);
	\draw [very thick, orchid] (-3,-2) to (-3,-1.5);
	\draw [very thick, orchid] (-5,-2) to (-5,-1.5);
	\draw [very thick, orchid] (-7,-2) to (-7,-1.5) to (-7,1.5);
	\draw [very thick, sea] (5,7.5) to (5,8);
	\draw [very thick, tomato] (3,7.5) to (3,8);
	\draw [very thick, sea] (1,7.5) to (1,8);
	\draw [very thick, orchid] (-1,7.5) to (-1,8);
	\draw [very thick, orchid] (-3,7.5) to (-3,8);
	\draw [very thick, orchid] (-5,7.5) to (-5,8);
	\draw [very thick, orchid] (-7,4.5) to (-7,8);
	\node at (-5,2.1) {\Large $\bulletst$};
	\node at (-5,3.9) {\Large $\bulletst$};
	\draw [thick, dotted] (-5.5,-1.5) rectangle (5.5,1.5);
	\draw [thick, dotted] (-5.5,4.5) rectangle (5.5,7.5);
	\node at (5,-2.35) {\tiny $\dicatsgen$};
	\node at (3,-2.35) {\tiny $\dicattgen$};
	\node at (1,-2.35) {\tiny $\dicatsgen$};
	\node at (-1,-2.35) {\tiny $\dicatstgen$};
	\node at (-3,-2.35) {\tiny $\dicattsgen$};
	\node at (-5,-2.35) {\tiny $\dicatstgen$};
	\node at (-7,-2.35) {\tiny $\dicattsgen$};
	\node at (5,8.35) {\tiny $\dicatsgen$};
	\node at (3,8.35) {\tiny $\dicattgen$};
	\node at (1,8.35) {\tiny $\dicatsgen$};
	\node at (-1,8.35) {\tiny $\dicatstgen$};
	\node at (-3,8.35) {\tiny $\dicattsgen$};
	\node at (-5,8.35) {\tiny $\dicatstgen$};
	\node at (-7,8.35) {\tiny $\dicattsgen$};
	\node at (0,-2.35) {\tiny $\,\cdots$};
	\node at (0,8.35) {\tiny $\,\cdots$};
	\node at (0,3) {\tiny $\,\cdots$};
	\node at (0,0) {\tiny $\JWs{k{-}1}$};
	\node at (0,6) {\tiny $\JWs{k{-}1}$};
\end{tikzpicture}.
\end{gather}
Similarly, we define $\JWt{k}$. Note that for any 
$k\in\N$, $\JWs{k}$ is a $\zq$-linear combination of degree zero Soergel diagrams.
\end{definition}

If $\qpar$ is not a root of unity or $\qpar$ is equal to 
$\pm 1$, then $\JWst{k}$ is well-defined for any $k\in\N$ 
(if one works in e.g. $\C$). If $\qpar$ is a 
complex, primitive $2n$th root of unity, then 
$\JWst{k}$ (with recursion as in 
\fullref{definition:JW}) is well-defined for $0\leq k \leq n$, and 
we can work over $\ZQ$.
Moreover, we require $\qpar$ to be a complex, primitive $2n$th root of unity in the 
proof of \fullref{proposition:locally-finitary} in order to make $\JWst{n-1}$ rotationally 
invariant, see e.g. \cite[before Proposition 1.2]{El1}.

\begin{remark}\label{remark:JW}
Formula \eqref{eq:JW} comes from Wenzl's formula for the Jones--Wenzl projectors in the 
Temperley--Lieb algebra, see \cite{We1}. See also \cite[Definition 4.12]{AT1}. 
\end{remark}

\begin{remark}\label{remark:jw}
From $\JWst{k}$ we obtain a certain diagram $\jwst{k}$. Since we do not need it very often  
in this paper, we refer to \cite[below Definition 5.14]{El1} for its definition. 
For our purposes it is enough to know that any 
$2$-functor maps $\jwst{k}$ to zero if and only if it maps $\JWst{k}$ to zero.
The picture to keep in mind is:
\[
\begin{tikzpicture}[anchorbase, scale=.3]
	\draw [very thick, tomato] (3,1.5) to (3,4);
	\draw [very thick, sea] (1,1.5) to (1,4);
	\draw [very thick, orchid] (-1,1.5) to (-1,4);
	\draw [very thick, orchid] (-3,1.5) to (-3,4);
	\draw [very thick, orchid] (-5,1.5) to (-5,4);
	\draw [very thick, orchid] (-5,3.5) to [out=270, in=90] (-6.5,0) to [out=270, in=90] (-5,-3.5) to (-5,-4);
	\draw [very thick, sea] (5,-4) to (5,-1.5);
	\draw [very thick, sea] (5,4) to (5,3.5) to [out=270, in=90] (6.5,0) to [out=270, in=90] (5,-3.5);
	\draw [very thick, tomato] (3,-4) to (3,-1.5);
	\draw [very thick, sea] (1,-4) to (1,-1.5);
	\draw [very thick, orchid] (-1,-4) to (-1,-1.5);
	\draw [very thick, orchid] (-3,-4) to (-3,-1.5);
	\draw [thick, dashed] (-5.5,-1.5) rectangle (5.5,1.5);
	\draw [thick, dotted] (-7.25,-3.5) rectangle (7.25,3.5);
	\node at (5,-4.35) {\tiny $\dicatsgen$};
	\node at (5,4.35) {\tiny $\dicatsgen$};
	\node at (3,-4.35) {\tiny $\dicattgen$};
	\node at (3,4.35) {\tiny $\dicattgen$};
	\node at (1,-4.35) {\tiny $\dicatsgen$};
	\node at (1,4.35) {\tiny $\dicatsgen$};
	\node at (-1,-4.35) {\tiny $\dicatstgen$};
	\node at (-1,4.35) {\tiny $\dicatstgen$};
	\node at (-3,-4.35) {\tiny $\dicattsgen$};
	\node at (-3,4.35) {\tiny $\dicattsgen$};
	\node at (-5,-4.35) {\tiny $\dicatstgen$};
	\node at (-5,4.35) {\tiny $\dicatstgen$};
	\node at (0,-4.35) {\tiny $\,\cdots$};
	\node at (0,4.35) {\tiny $\,\cdots$};
	\node at (0,0) {\tiny $\jws{k}$};
	\node at (-6.25,2.9) {\tiny $\JWs{k}$};
\end{tikzpicture}
\]
using the convention 
that ``open'' strings are to be closed with dots. This is almost 
a definition of $\jws{k}$, given the relations in the Soergel calculus. 
\end{remark}

\begin{example}\label{example:jw}
With three strands we have the following:
\[
\JWs{3}=
\begin{tikzpicture}[anchorbase, scale=.3]
	\draw [very thick, sea] (-2,-2) to (-2,2);
	\draw [very thick, tomato] (0,-2) to (0,2);
	\draw [very thick, sea] (2,-2) to (2,2);
	\node at (-2,-2.35) {\tiny $\dicatsgen$};
	\node at (-2,2.35) {\tiny $\dicatsgen$};
	\node at (0,-2.35) {\tiny $\dicattgen$};
	\node at (0,2.35) {\tiny $\dicattgen$};
	\node at (2,-2.35) {\tiny $\dicatsgen$};
	\node at (2,2.35) {\tiny $\dicatsgen$};
\end{tikzpicture}
+
\scalebox{.9}{$\nicefrac{$1$}{$[2]_{\qpar}$}$}
\cdot
\begin{tikzpicture}[anchorbase, scale=.3]
	\draw [very thick, sea] (-2,-2) to (0,-.5) to (0,.5) to (-2,2);
	\draw [very thick, tomato] (0,-2) to (0,-1.25);
	\draw [very thick, tomato] (0,1.25) to (0,2);
	\draw [very thick, sea] (2,-2) to (0,-.5) to (0,.5) to (2,2);
	\node at (0,-1.25) {\Large $\bullett$};
	\node at (0,1.25) {\Large $\bullett$};
	\node at (-2,-2.35) {\tiny $\dicatsgen$};
	\node at (-2,2.35) {\tiny $\dicatsgen$};
	\node at (0,-2.35) {\tiny $\dicattgen$};
	\node at (0,2.35) {\tiny $\dicattgen$};
	\node at (2,-2.35) {\tiny $\dicatsgen$};
	\node at (2,2.35) {\tiny $\dicatsgen$};
\end{tikzpicture}
\rightsquigarrow
\jws{3}=
\begin{tikzpicture}[anchorbase, scale=.3]
	\draw [very thick, sea] (2,-2) to (2,-1.25);
	\draw [very thick, sea] (0,1.25) to (0,2);
	\draw [very thick, tomato] (0,-2) to [out=90, in=270] (2,2);
	\node at (0,1.25) {\Large $\bullets$};
	\node at (2,-1.25) {\Large $\bullets$};
	\node at (0,-2.35) {\tiny $\dicattgen$};
	\node at (0,2.35) {\tiny $\dicatsgen$};
	\node at (2,-2.35) {\tiny $\dicatsgen$};
	\node at (2,2.35) {\tiny $\dicattgen$};
\end{tikzpicture}
+
\scalebox{.9}{$\nicefrac{$1$}{$[2]_{\qpar}$}$}
\cdot
\begin{tikzpicture}[anchorbase, scale=.3]
	\draw [very thick, tomato] (0,-2) to (0,-1.25);
	\draw [very thick, tomato] (2,1.25) to (2,2);
	\draw [very thick, sea] (2,-2) to [out=90, in=270] (0,2);
	\node at (0,-1.25) {\Large $\bullett$};
	\node at (2,1.25) {\Large $\bullett$};
	\node at (0,-2.35) {\tiny $\dicattgen$};
	\node at (0,2.35) {\tiny $\dicatsgen$};
	\node at (2,-2.35) {\tiny $\dicatsgen$};
	\node at (2,2.35) {\tiny $\dicattgen$};
\end{tikzpicture}.
\]
More examples can be found in e.g. \cite[Examples 5.16 and 5.17]{El1}.
\end{example}

\subsubsection{The two-color Soergel calculus}\label{subsub:thedhc}

Recall that we have fixed $n\in\Z_{>0}$ or ``$n=\infty$''. In the first case, let $\qpar$ be any primitive $2n$th root of unity. 
In the second case, let $\qpar$ be any non-zero complex number.

\begin{definition}\label{definition:dihedralcat}
Denote by $\dihedralcat{n}$
the additive closure of the $\zq$-linear 
$2$-category, determined by the following data:
\smallskip
\begin{enumerate}[label=(\roman*)]

\setlength\itemsep{.15cm}

\item There is one (not further specified) object.

\item The $1$-morphisms are formal shifts of finite words $\word$ in the 
symbols $\dicatsgen$ and $\dicattgen$. 
(To simplify notation, we often omit these shifts in the diagrams.)

\item The $2$-morphism space 
$\twoHom_{\dihedralcat{n}}(\word\{\shiftme{a}\},\word^{\prime}\{\shiftme{b}\})$ 
is the $\zq$-linear span of all Soergel diagrams from $\word$ 
to $\word^{\prime}$ of degree $\shiftme{b}-\shiftme{a}$, quotiented by the 
relations from \eqref{eq:far-comm} to \eqref{eq:twocolor3}.

\item Vertical composition $\vcomp$ is induced by the vertical gluing of Soergel diagrams, while 
the horizontal composition $\hcomp$ is induced by putting diagrams next to each other horizontally. 
(Using the same reading conventions as above.)
\end{enumerate}
\smallskip
For the usual Eckmann--Hilton relation for $2$-morphisms to hold, 
we have to impose the far-commutativity relation 
(here $f,g$ are two arbitrary Soergel diagrams):
\renewcommand{\theequation}{\arabic{section}.EH}
\begin{gather}\label{eq:far-comm}
\begin{tikzpicture}[anchorbase, scale=.3]
	\draw [very thick, orchid] (-1,-2) to (-1,-1.25);
	\draw [very thick, orchid] (-1,1.25) to (-1,5);
	\draw [very thick, orchid] (-3,-2) to (-3,-1.25);
	\draw [very thick, orchid] (-3,1.25) to (-3,5);
	\draw [very thick, orchid] (1,-2) to (1,1.75);
	\draw [very thick, orchid] (1,4.25) to (1,5);
	\draw [very thick, orchid] (3,-2) to (3,1.75);
	\draw [very thick, orchid] (3,4.25) to (3,5);
	\draw [thick] (-3.25,-1.25) rectangle (-.75,1.25);
	\draw [thick] (.75,1.75) rectangle (3.25,4.25);
	\node at (-3,-2.35) {\tiny $\word_{l}$};
	\node at (-3,5.55) {\tiny $\word_{l^{\prime}}^{\prime}$};
	\node at (-1,-2.35) {\tiny $\word_{k{+}1}$};
	\node at (-1,5.55) {\tiny $\word_{k^{\prime}{+}1}^{\prime}$};
	\node at (1,-2.35) {\tiny $\word_{k}$};
	\node at (1,5.55) {\tiny $\word_{k^{\prime}}^{\prime}$};
	\node at (3,-2.35) {\tiny $\word_{1}$};
	\node at (3,5.55) {\tiny $\word_{1}^{\prime}$};
	\node at (-2,-1.625) {\,$\cdots$};
	\node at (2,-1.625) {\,$\cdots$};
	\node at (-2,4.625) {\,$\cdots$};
	\node at (2,4.625) {\,$\cdots$};
	\node at (-2,0) {\tiny $g$};
	\node at (2,3) {\tiny $f$};
\end{tikzpicture}
=
\begin{tikzpicture}[anchorbase, scale=.3]
	\draw [very thick, orchid] (-1,-2) to (-1,1.75);
	\draw [very thick, orchid] (-1,4.25) to (-1,5);
	\draw [very thick, orchid] (-3,-2) to (-3,1.75);
	\draw [very thick, orchid] (-3,4.25) to (-3,5);
	\draw [very thick, orchid] (1,-2) to (1,-1.25);
	\draw [very thick, orchid] (1,1.25) to (1,5);
	\draw [very thick, orchid] (3,-2) to (3,-1.25);
	\draw [very thick, orchid] (3,1.25) to (3,5);
	\draw [thick] (-3.25,1.75) rectangle (-.75,4.25);
	\draw [thick] (.75,-1.25) rectangle (3.25,1.25);
	\node at (-3,-2.35) {\tiny $\word_{l}$};
	\node at (-3,5.55) {\tiny $\word_{l^{\prime}}^{\prime}$};
	\node at (-1,-2.35) {\tiny $\word_{k{+}1}$};
	\node at (-1,5.55) {\tiny $\word_{k^{\prime}{+}1}^{\prime}$};
	\node at (1,-2.35) {\tiny $\word_{k}$};
	\node at (1,5.55) {\tiny $\word_{k^{\prime}}^{\prime}$};
	\node at (3,-2.35) {\tiny $\word_{1}$};
	\node at (3,5.55) {\tiny $\word_{1}^{\prime}$};
	\node at (-2,-1.625) {\,$\cdots$};
	\node at (2,-1.625) {\,$\cdots$};
	\node at (-2,4.625) {\,$\cdots$};
	\node at (2,4.625) {\,$\cdots$};
	\node at (-2,3) {\tiny $g$};
	\node at (2,0) {\tiny $f$};
\end{tikzpicture}.
\end{gather}
\renewcommand{\theequation}{\arabic{section}.\arabic{equation}}
The other relations among Soergel diagrams are the following. 
First, the Frobenius relations (including the horizontal mirror of \eqref{eq:frob2}):
\\
\noindent\begin{tabularx}{0.99\textwidth}{XX}
\renewcommand{\theequation}{\arabic{section}.Fr1}
\begin{equation}\hspace{-5.2cm}\label{eq:frob1}
\begin{tikzpicture}[anchorbase, scale=.3]
	\draw [very thick, sea] (0,-2) to (1,.5) to (0,2);
	\draw [very thick, sea] (4,-2) to (3,-.5) to (4,2);
	\draw [very thick, sea] (1,.5) to (3,-.5);
	\node at (0,-2.35) {\tiny $\dicatsgen$};
	\node at (0,2.35) {\tiny $\dicatsgen$};
	\node at (4,-2.35) {\tiny $\dicatsgen$};
	\node at (4,2.35) {\tiny $\dicatsgen$};
\end{tikzpicture}
\!=\!
\begin{tikzpicture}[anchorbase, scale=.3]
	\draw [very thick, sea] (0,-2) to (2,-1) to (4,-2);
	\draw [very thick, sea] (0,2) to (2,1) to (4,2);
	\draw [very thick, sea] (2,-1) to (2,1);
	\node at (0,-2.35) {\tiny $\dicatsgen$};
	\node at (0,2.35) {\tiny $\dicatsgen$};
	\node at (4,-2.35) {\tiny $\dicatsgen$};
	\node at (4,2.35) {\tiny $\dicatsgen$};
\end{tikzpicture}
\!=\!
\begin{tikzpicture}[anchorbase, scale=.3]
	\draw [very thick, sea] (0,-2) to (1,-.5) to (0,2);
	\draw [very thick, sea] (4,-2) to (3,.5) to (4,2);
	\draw [very thick, sea] (1,-.5) to (3,.5);
	\node at (0,-2.35) {\tiny $\dicatsgen$};
	\node at (0,2.35) {\tiny $\dicatsgen$};
	\node at (4,-2.35) {\tiny $\dicatsgen$};
	\node at (4,2.35) {\tiny $\dicatsgen$};
\end{tikzpicture},
\end{equation} 
\renewcommand{\theequation}{\arabic{section}.\arabic{equation}}
&
\renewcommand{\theequation}{\arabic{section}.Fr2} 
\begin{equation}\hspace{-7.2cm}\label{eq:frob2}
\begin{tikzpicture}[anchorbase, scale=.3]
	\draw [very thick, sea] (0,-2) to (0,0) to (2,2);
	\draw [very thick, sea] (0,0) to (-1,1);
	\node at (0,-2.35) {\tiny $\dicatsgen$};
	\node at (2,2.35) {\tiny $\dicatsgen$};
	\node at (-1,1) {\Large $\bullets$};
\end{tikzpicture}
\!=\!
\begin{tikzpicture}[anchorbase, scale=.3]
	\draw [very thick, sea] (0,-2) to (0,2);
	\node at (0,-2.35) {\tiny $\dicatsgen$};
	\node at (0,2.35) {\tiny $\dicatsgen$};
\end{tikzpicture}
\!=\!
\begin{tikzpicture}[anchorbase, scale=.3]
	\draw [very thick, sea] (0,-2) to (0,0) to (-2,2);
	\draw [very thick, sea] (0,0) to (1,1);
	\node at (0,-2.35) {\tiny $\dicatsgen$};
	\node at (-2,2.35) {\tiny $\dicatsgen$};
	\node at (1,1) {\Large $\bullets$};
\end{tikzpicture}.
\end{equation}
\renewcommand{\theequation}{\arabic{section}.\arabic{equation}}
\end{tabularx}\\
Then the needle relation (including its horizontal mirror):
\renewcommand{\theequation}{\arabic{section}.Ne} 
\begin{gather}\label{eq:needle}
\begin{tikzpicture}[anchorbase, scale=.3]
	\draw [very thick, sea] (0,-2) to (0,0) to (-1,1);
	\draw [very thick, sea] (0,0) to (1,1);
	\draw [very thick, sea] (-1,1) to [out=135, in=180] (0,3) to [out=0, in=45] (1,1);
	\node at (0,-2.35) {\tiny $\dicatsgen$};
	\node at (0,-3) {\tiny $\phantom{a}$};
\end{tikzpicture}
=
0.
\end{gather}
\renewcommand{\theequation}{\arabic{section}.\arabic{equation}}
The next relations, still independent of $n$, are the barbell forcing relations:
\\
\noindent\begin{tabularx}{0.99\textwidth}{XX}
\renewcommand{\theequation}{\arabic{section}.BF1}
\begin{equation}\hspace{-7.5cm}\label{eq:barb1}
\begin{tikzpicture}[anchorbase, scale=.3]
	\draw [very thick, sea] (0,-2) to (0,2);
	\draw [very thick, sea] (-2,-1) to (-2,1);
	\node at (0,-2.35) {\tiny $\dicatsgen$};
	\node at (0,2.35) {\tiny $\dicatsgen$};
	\node at (-2,-1) {\Large $\bullets$};
	\node at (-2,1) {\Large $\bullets$};
\end{tikzpicture}
\!=\!
2\cdot
\begin{tikzpicture}[anchorbase, scale=.3]
	\draw [very thick, sea] (0,-2) to (0,-1);
	\draw [very thick, sea] (0,1) to (0,2);
	\node at (0,-2.35) {\tiny $\dicatsgen$};
	\node at (0,2.35) {\tiny $\dicatsgen$};
	\node at (0,-1) {\Large $\bullets$};
	\node at (0,1) {\Large $\bullets$};
\end{tikzpicture}
\!-\!
\begin{tikzpicture}[anchorbase, scale=.3]
	\draw [very thick, sea] (0,-2) to (0,2);
	\draw [very thick, sea] (2,-1) to (2,1);
	\node at (0,-2.35) {\tiny $\dicatsgen$};
	\node at (0,2.35) {\tiny $\dicatsgen$};
	\node at (2,-1) {\Large $\bullets$};
	\node at (2,1) {\Large $\bullets$};
\end{tikzpicture},
\end{equation}
\renewcommand{\theequation}{\arabic{section}.\arabic{equation}} 
&
\renewcommand{\theequation}{\arabic{section}.BF2'}
\begin{equation}\hspace{-5.0cm}\label{eq:barb2-prime}
\begin{tikzpicture}[anchorbase, scale=.3]
	\draw [very thick, sea] (0,-2) to (0,2);
	\draw [very thick, tomato] (-2,-1) to (-2,1);
	\node at (0,-2.35) {\tiny $\dicatsgen$};
	\node at (0,2.35) {\tiny $\dicatsgen$};
	\node at (-2,-1) {\Large $\bullett$};
	\node at (-2,1) {\Large $\bullett$};
\end{tikzpicture}
\!=\!
\begin{tikzpicture}[anchorbase, scale=.3]
	\draw [very thick, sea] (0,-2) to (0,2);
	\draw [very thick, tomato] (2,-1) to (2,1);
	\node at (0,-2.35) {\tiny $\dicatsgen$};
	\node at (0,2.35) {\tiny $\dicatsgen$};
	\node at (2,-1) {\Large $\bullett$};
	\node at (2,1) {\Large $\bullett$};
\end{tikzpicture}
\!+
[2]_{\qpar}\cdot
\begin{tikzpicture}[anchorbase, scale=.3]
	\draw [very thick, sea] (0,-2) to (0,2);
	\draw [very thick, sea] (2,-1) to (2,1);
	\node at (0,-2.35) {\tiny $\dicatsgen$};
	\node at (0,2.35) {\tiny $\dicatsgen$};
	\node at (2,-1) {\Large $\bullets$};
	\node at (2,1) {\Large $\bullets$};
\end{tikzpicture}
\!-[2]_{\qpar}\cdot
\begin{tikzpicture}[anchorbase, scale=.3]
	\draw [very thick, sea] (0,-2) to (0,-1);
	\draw [very thick, sea] (0,1) to (0,2);
	\node at (0,-2.35) {\tiny $\dicatsgen$};
	\node at (0,2.35) {\tiny $\dicatsgen$};
	\node at (0,-1) {\Large $\bullets$};
	\node at (0,1) {\Large $\bullets$};
\end{tikzpicture}.
\end{equation}
\renewcommand{\theequation}{\arabic{section}.\arabic{equation}}
\end{tabularx}\\
Finally, three relations which depend on $n$ 
(including the horizontal mirror of the displayed ones, the versions for even $n$, 
and all possibilities for the position of the dot 
in \eqref{eq:twocolor2}):
\\
\noindent\begin{tabularx}{0.99\textwidth}{XX}
\renewcommand{\theequation}{\arabic{section}.$2n$v1}
\begin{equation}\hspace{-6cm}\label{eq:twocolor1}
\begin{tikzpicture}[anchorbase, scale=.3]
	\draw [very thick, tomato] (-1,2) to (-1,1.25);
	\draw [very thick, tomato] (1,2) to (1,1.25);
	\draw [very thick, tomato] (1,1.625) to (3,1.625) to (3,2);
	\draw [very thick, sea] (-1,-2) to (-1,-1.25);
	\draw [very thick, sea] (1,-2) to (1,-1.25);
	\draw [thick] (-1.5,-1.25) rectangle (1.5,1.25);
	\node at (-1,2.35) {\tiny $\dicattgen$};
	\node at (1,2.35) {\tiny $\dicattgen$};
	\node at (3,2.35) {\tiny $\dicattgen$};
	\node at (-1,-2.35) {\tiny $\dicatsgen$};
	\node at (1,-2.35) {\tiny $\dicatsgen$};
	\node at (0,-2.35) {\tiny $\cdots$};
	\node at (0,2.35) {\tiny $\cdots$};
	\node at (0,0) {\tiny $n$};
	\node at (0,-3.85) {$\phantom{s}$};
	\node at (0,3.85) {$\phantom{t}$};
\end{tikzpicture}
=
\begin{tikzpicture}[anchorbase, scale=.3]
	\draw [very thick, tomato] (0,.5) to (0,-.5);
	\draw [very thick, tomato] (-1,2) to (-1,1.25);
	\draw [very thick, tomato] (1,2) to (1,1.25);
	\draw [very thick, tomato] (3,-.5) to (3,2);
	\draw [very thick, sea] (-1,-2) to (-1,.5);
	\draw [very thick, sea] (2,-2) to (2,-1.25);
	\draw [very thick, sea] (2,-.5) to (2,.5);
	\draw [very thick, sea] (-1,-1.625) to (0,-1.625) to (0,-1.25);
	\draw [thick] (-1.5,1.25) rectangle (2.5,.5);
	\draw [thick] (-.5,-1.25) rectangle (3.5,-.5);
	\node at (-1,2.35) {\tiny $\dicattgen$};
	\node at (1,2.35) {\tiny $\dicattgen$};
	\node at (3,2.35) {\tiny $\dicattgen$};
	\node at (-1,-2.35) {\tiny $\dicatsgen$};
	\node at (2,-2.35) {\tiny $\dicatsgen$};
	\node at (.5,-2.35) {\tiny $\cdots$};
	\node at (0,2.35) {\tiny $\cdots$};
	\node at (1,0) {\tiny $\cdots$};
	\node at (.5,.875) {\tiny $n$};
	\node at (1.5,-.875) {\tiny $n$};
	\node at (0,-3.85) {$\phantom{s}$};
	\node at (0,3.85) {$\phantom{t}$};
\end{tikzpicture},
\end{equation}
\renewcommand{\theequation}{\arabic{section}.\arabic{equation}} 
&
\renewcommand{\theequation}{\arabic{section}.$2n$v2}
\begin{equation}\hspace{-6cm}\label{eq:twocolor2}
\begin{tikzpicture}[anchorbase, scale=.3]
	\draw [very thick, tomato] (-2,2) to (-2,1.25);
	\draw [very thick, tomato] (2,2) to (2,1.25);
	\draw [very thick, tomato] (-1,-2) to (-1,-1.25);
	\draw [very thick, tomato] (1,-2) to (1,-1.25);
	\draw [very thick, tomato] (0,2) to (0,1.25);
	\draw [very thick, sea] (-2,-2) to (-2,-1.25);
	\draw [very thick, sea] (2,-2) to (2,-1.25);
	\draw [very thick, sea] (-1,2) to (-1,1.25);
	\draw [very thick, sea] (1,2) to (1,1.25);
	\draw [very thick, sea] (0,-2) to (0,-1.25);
	\draw [thick] (-2.5,-1.25) rectangle (2.5,1.25);
	\node at (-1,-2.35) {\tiny $\dicattgen$};
	\node at (1,-2.35) {\tiny $\dicattgen$};
	\node at (-2,2.35) {\tiny $\dicattgen$};
	\node at (2,2.35) {\tiny $\dicattgen$};
	\node at (0,2) {\Large $\bullett$};
	\node at (-1,2.35) {\tiny $\dicatsgen$};
	\node at (1,2.35) {\tiny $\dicatsgen$};
	\node at (-2,-2.35) {\tiny $\dicatsgen$};
	\node at (2,-2.35) {\tiny $\dicatsgen$};
	\node at (0,-2.35) {\tiny $\dicatsgen$};
	\node at (-1.35,-2.35) {\tiny $\cdot$};
	\node at (-1.35,2.35) {\tiny $\cdot$};
	\node at (-1.65,-2.35) {\tiny $\cdot$};
	\node at (-1.65,2.35) {\tiny $\cdot$};
	\node at (1.35,-2.35) {\tiny $\cdot$};
	\node at (1.35,2.35) {\tiny $\cdot$};
	\node at (1.65,-2.35) {\tiny $\cdot$};
	\node at (1.65,2.35) {\tiny $\cdot$};
	\node at (0,0) {\tiny $n$};
	\node at (0,-3.85) {$\phantom{s}$};
	\node at (0,3.85) {$\phantom{t}$};
\end{tikzpicture}
=
\begin{tikzpicture}[anchorbase, scale=.3]
	\draw [very thick, tomato] (-2,2) to (-2,1.25);
	\draw [very thick, tomato] (2,2) to (2,1.25);
	\draw [very thick, tomato] (-1,-2) to (-1,-1.25);
	\draw [very thick, tomato] (1,-2) to (1,-1.25);
	\draw [very thick, sea] (-2,-2) to (-2,-1.25);
	\draw [very thick, sea] (2,-2) to (2,-1.25);
	\draw [very thick, sea] (-1,2) to (-1,1.625) to (0,1.625) to (0,1.25);
	\draw [very thick, sea] (1,2) to (1,1.625) to (0,1.625);
	\draw [very thick, sea] (0,-2) to (0,-1.25);
	\draw [thick, dashed] (-2.5,-1.25) rectangle (2.5,1.25);
	\node at (-1,-2.35) {\tiny $\dicattgen$};
	\node at (1,-2.35) {\tiny $\dicattgen$};
	\node at (-2,2.35) {\tiny $\dicattgen$};
	\node at (2,2.35) {\tiny $\dicattgen$};
	\node at (-1,2.35) {\tiny $\dicatsgen$};
	\node at (1,2.35) {\tiny $\dicatsgen$};
	\node at (-2,-2.35) {\tiny $\dicatsgen$};
	\node at (2,-2.35) {\tiny $\dicatsgen$};
	\node at (0,-2.35) {\tiny $\dicatsgen$};
	\node at (-1.35,-2.35) {\tiny $\cdot$};
	\node at (-1.35,2.35) {\tiny $\cdot$};
	\node at (-1.65,-2.35) {\tiny $\cdot$};
	\node at (-1.65,2.35) {\tiny $\cdot$};
	\node at (1.35,-2.35) {\tiny $\cdot$};
	\node at (1.35,2.35) {\tiny $\cdot$};
	\node at (1.65,-2.35) {\tiny $\cdot$};
	\node at (1.65,2.35) {\tiny $\cdot$};
	\node at (0,0) {\tiny $\jwst{n}$};
	\node at (0,-3.85) {$\phantom{s}$};
	\node at (0,3.85) {$\phantom{t}$};
\end{tikzpicture},
\end{equation}
\renewcommand{\theequation}{\arabic{section}.\arabic{equation}}
\end{tabularx}
\renewcommand{\theequation}{\arabic{section}.$2n$v3}
\begin{equation}\label{eq:twocolor3}
\begin{tikzpicture}[anchorbase, scale=.3]
	\draw [very thick, sea] (-1,2) to (-1,1.25);
	\draw [very thick, sea] (1,2) to (1,1.25);
	\draw [very thick, tomato] (-1,-2) to (-1,-1.25);
	\draw [very thick, tomato] (1,-2) to (1,-1.25);
	\draw [very thick, sea] (1,2) to [out=90, in=180] (1.5,2.5) to [out=0, in=90] (2,2) to (2,-3.5);
	\draw [very thick, sea] (-1,2) to [out=90, in=180] (1.5,3.5) to [out=0, in=90] (4,2) to (4,-3.5);
	\draw [very thick, tomato] (-1,-2) to [out=270, in=0] (-1.5,-2.5) to [out=180, in=270] (-2,-2) to (-2,3.5);
	\draw [very thick, tomato] (1,-2) to [out=270, in=0] (-1.5,-3.5) to [out=180, in=270] (-4,-2) to (-4,3.5);
	\draw [thick] (-1.5,-1.25) rectangle (1.5,1.25);
	\node at (4,-3.85) {\tiny $\dicatsgen$};
	\node at (2,-3.85) {\tiny $\dicatsgen$};
	\node at (-4,3.85) {\tiny $\dicattgen$};
	\node at (-2,3.85) {\tiny $\dicattgen$};
	\node at (-3,3.85) {\tiny $\cdots$};
	\node at (3,-3.85) {\tiny $\cdots$};
	\node at (0,-1.75) {\tiny $\cdots$};
	\node at (0,1.75) {\tiny $\cdots$};
	\node at (0,0) {\tiny $n$};
\end{tikzpicture}
=
\begin{tikzpicture}[anchorbase, scale=.3]
	\draw [very thick, tomato] (-1,2) to (-1,1.25);
	\draw [very thick, tomato] (1,2) to (1,1.25);
	\draw [very thick, sea] (-1,-2) to (-1,-1.25);
	\draw [very thick, sea] (1,-2) to (1,-1.25);
	\draw [thick] (-1.5,-1.25) rectangle (1.5,1.25);
	\node at (-1,2.35) {\tiny $\dicattgen$};
	\node at (1,2.35) {\tiny $\dicattgen$};
	\node at (-1,-2.35) {\tiny $\dicatsgen$};
	\node at (1,-2.35) {\tiny $\dicatsgen$};
	\node at (0,-2.35) {\tiny $\cdots$};
	\node at (0,2.35) {\tiny $\cdots$};
	\node at (0,0) {\tiny $n$};
	\node at (0,-3.85) {$\phantom{s}$};
	\node at (0,3.85) {$\phantom{t}$};
\end{tikzpicture}
=
\begin{tikzpicture}[anchorbase, scale=.3]
	\draw [very thick, sea] (1,2) to (1,1.25);
	\draw [very thick, sea] (-1,2) to (-1,1.25);
	\draw [very thick, tomato] (1,-2) to (1,-1.25);
	\draw [very thick, tomato] (-1,-2) to (-1,-1.25);
	\draw [very thick, sea] (-1,2) to [out=90, in=0] (-1.5,2.5) to [out=180, in=90] (-2,2) to (-2,-3.5);
	\draw [very thick, sea] (1,2) to [out=90, in=0] (-1.5,3.5) to [out=180, in=90] (-4,2) to (-4,-3.5);
	\draw [very thick, tomato] (1,-2) to [out=270, in=180] (1.5,-2.5) to [out=0, in=270] (2,-2) to (2,3.5);
	\draw [very thick, tomato] (-1,-2) to [out=270, in=180] (1.5,-3.5) to [out=0, in=270] (4,-2) to (4,3.5);
	\draw [thick] (-1.5,-1.25) rectangle (1.5,1.25);
	\node at (-4,-3.85) {\tiny $\dicatsgen$};
	\node at (-2,-3.85) {\tiny $\dicatsgen$};
	\node at (4,3.85) {\tiny $\dicattgen$};
	\node at (2,3.85) {\tiny $\dicattgen$};
	\node at (3,3.85) {\tiny $\cdots$};
	\node at (-3,-3.85) {\tiny $\cdots$};
	\node at (0,-1.75) {\tiny $\cdots$};
	\node at (0,1.75) {\tiny $\cdots$};
	\node at (0,0) {\tiny $n$};
\end{tikzpicture}.
\end{equation}
\renewcommand{\theequation}{\arabic{section}.\arabic{equation}}

Moreover, all of the relations listed above, one- and two-colored, 
exist in two versions, i.e. the displayed ones 
and their color inverted ($\dicatsgen\rightleftarrows\dicattgen$) counterparts.
\end{definition}

The $2$-category $\dihedralcat{\infty}$ is defined similarly, but using only 
the generators and relations which are independent of $n$.

Let $\dihedralcatbig{}$ denote the $2$-category obtained 
from $\dihedralcat{}$ as explained in \fullref{subsec:basics}.
Since the relations \eqref{eq:far-comm} to \eqref{eq:twocolor2} 
are homogeneous with respect to the degrees of the Soergel diagrams, 
the $2$-category $\dihedralcatbig{}$ is additive, graded and $\zq$-linear.

\begin{example}\label{example:iso-rels}
Isotopy relations such as ``zigzag relations'' and other as e.g.
\begin{gather*}
\begin{tikzpicture}[anchorbase, scale=.3]
	\draw [very thick, sea] (0,-2) to (0,0) to [out=90, in=180] (1,1.5) to [out=0, in=90] (2,0);
	\draw [very thick, sea] (2,0) to [out=270, in=180] (3,-1.5) to [out=0, in=270] (4,0) to (4,2);
	\node at (0,-2.35) {\tiny $\dicatsgen$};
	\node at (4,2.35) {\tiny $\dicatsgen$};
\end{tikzpicture}
=
\begin{tikzpicture}[anchorbase, scale=.3]
	\draw [very thick, sea] (0,-2) to (0,2);
	\node at (0,-2.35) {\tiny $\dicatsgen$};
	\node at (0,2.35) {\tiny $\dicatsgen$};
\end{tikzpicture}
=
\begin{tikzpicture}[anchorbase, scale=.3]
	\draw [very thick, sea] (0,2) to (0,0) to [out=270, in=180] (1,-1.5) to [out=0, in=270] (2,0);
	\draw [very thick, sea] (2,0) to [out=90, in=180] (3,1.5) to [out=0, in=90] (4,0) to (4,-2);
	\node at (0,2.35) {\tiny $\dicatsgen$};
	\node at (4,-2.35) {\tiny $\dicatsgen$};
\end{tikzpicture}
\;,\;
\begin{tikzpicture}[anchorbase, scale=.3]
	\draw [very thick, sea] (0,-2) to (0,0) to [out=90, in=180] (1,2) to [out=0, in=90] (2,0);
	\node at (0,-2.35) {\tiny $\dicatsgen$};
	\node at (0,2.35) {\tiny $\phantom{a}$};
	\node at (2,0) {\Large $\bullets$};
\end{tikzpicture}
=
\begin{tikzpicture}[anchorbase, scale=.3]
	\draw [very thick, sea] (0,-2) to (0,0);
	\node at (0,-2.35) {\tiny $\dicatsgen$};
	\node at (0,2.35) {\tiny $\phantom{a}$};
	\node at (0,0) {\Large $\bullets$};
\end{tikzpicture}
=
\begin{tikzpicture}[anchorbase, scale=.3]
	\draw [very thick, sea] (0,-2) to (0,0) to [out=90, in=0] (-1,2) to [out=180, in=90] (-2,0);
	\node at (0,-2.35) {\tiny $\dicatsgen$};
	\node at (0,2.35) {\tiny $\phantom{a}$};
	\node at (-2,0) {\Large $\bullets$};
\end{tikzpicture}
\;,\;
\begin{tikzpicture}[anchorbase, scale=.3]
	\draw [very thick, sea] (-1,-2) to (-1,2);
	\draw [very thick, sea] (-1,0) to [out=315, in=180] (.25,-1) to [out=0, in=270] (1,0) to (1,2);
	\node at (-1,-2.35) {\tiny $\dicatsgen$};
	\node at (-1,2.35) {\tiny $\dicatsgen$};
	\node at (1,2.35) {\tiny $\dicatsgen$};
\end{tikzpicture}
=
\begin{tikzpicture}[anchorbase, scale=.3]
	\draw [very thick, sea] (0,-2) to (0,0) to (2,2);
	\draw [very thick, sea] (0,0) to (-2,2);
	\node at (0,-2.35) {\tiny $\dicatsgen$};
	\node at (-2,2.35) {\tiny $\dicatsgen$};
	\node at (2,2.35) {\tiny $\dicatsgen$};
\end{tikzpicture}
=
\begin{tikzpicture}[anchorbase, scale=.3]
	\draw [very thick, sea] (1,-2) to (1,2);
	\draw [very thick, sea] (1,0) to [out=225, in=0] (-.25,-1) to [out=180, in=270] (-1,0) to (-1,2);
	\node at (1,-2.35) {\tiny $\dicatsgen$};
	\node at (1,2.35) {\tiny $\dicatsgen$};
	\node at (-1,2.35) {\tiny $\dicatsgen$};
\end{tikzpicture}
\end{gather*}
are consequences of the Frobenius relations from \eqref{eq:frob1} 
and \eqref{eq:frob2}. However, the isotopy 
relations in \eqref{eq:twocolor3} 
are not consequences of the Frobenius relations.
\end{example}

\begin{remark}\label{remark:bf-equi-a}
When $2$ is invertible it is not hard to show that  
\eqref{eq:barb2-prime} can actually be replaced by
\renewcommand{\theequation}{\arabic{section}.BF2}
\begin{gather}\label{eq:barb2}
2\cdot
\left(
\begin{tikzpicture}[anchorbase, scale=.3]
	\draw [very thick, sea] (0,-2) to (0,2);
	\draw [very thick, tomato] (-2,-1) to (-2,1);
	\node at (0,-2.35) {\tiny $\dicatsgen$};
	\node at (0,2.35) {\tiny $\dicatsgen$};
	\node at (-2,-1) {\Large $\bullett$};
	\node at (-2,1) {\Large $\bullett$};
\end{tikzpicture}
-
\begin{tikzpicture}[anchorbase, scale=.3]
	\draw [very thick, sea] (0,-2) to (0,2);
	\draw [very thick, tomato] (2,-1) to (2,1);
	\node at (0,-2.35) {\tiny $\dicatsgen$};
	\node at (0,2.35) {\tiny $\dicatsgen$};
	\node at (2,-1) {\Large $\bullett$};
	\node at (2,1) {\Large $\bullett$};
\end{tikzpicture}
\right)
=
-[2]_{\qpar}\cdot
\left(
\begin{tikzpicture}[anchorbase, scale=.3]
	\draw [very thick, sea] (0,-2) to (0,2);
	\draw [very thick, sea] (-2,-1) to (-2,1);
	\node at (0,-2.35) {\tiny $\dicatsgen$};
	\node at (0,2.35) {\tiny $\dicatsgen$};
	\node at (-2,-1) {\Large $\bullets$};
	\node at (-2,1) {\Large $\bullets$};
\end{tikzpicture}
-
\begin{tikzpicture}[anchorbase, scale=.3]
	\draw [very thick, sea] (0,-2) to (0,2);
	\draw [very thick, sea] (2,-1) to (2,1);
	\node at (0,-2.35) {\tiny $\dicatsgen$};
	\node at (0,2.35) {\tiny $\dicatsgen$};
	\node at (2,-1) {\Large $\bullets$};
	\node at (2,1) {\Large $\bullets$};
\end{tikzpicture}
\right).
\end{gather}
\renewcommand{\theequation}{\arabic{section}.\arabic{equation}}
We use this equivalent relation later in \fullref{subsub:functor-infty-2}.
\end{remark}

The following is a direct consequence 
of \cite[Theorems 5.29 and 6.24]{El1}. To be consistent 
with our conventions from \fullref{sec:2reps}, we switch to a field 
$\someR$ containing $\zq$.

\begin{propositionn}\label{proposition:locally-finitary}
The $2$-categories 
$\Kar(\dihedralcat{n})^{\star}$ and $\Kar(\dihedralcat{\infty})^{\star}$ 
are graded finitary and 
graded locally finitary, respectively.\qedmake
\end{propositionn}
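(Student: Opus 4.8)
The plan is to read everything off from Elias' diagrammatic description of these $2$-categories, so that the proof is essentially bookkeeping plus one standard fact about Karoubi envelopes. By the conventions of \fullref{subsec:basics} I must check: (a) finitely many objects up to equivalence; (b) local finiteness of the $2$-morphism spaces; (c) each hom-category is idempotent complete and Krull--Schmidt; (d) the identity $1$-morphisms are indecomposable; and (e) the split Grothendieck group of the underlying degree-zero $2$-category has finite $\Z[\vpar,\vpar^{-1}]$-rank for $\Kar(\dihedralcat{n})$ and countably infinite rank for $\Kar(\dihedralcat{\infty})$. Items (a) and (d) are immediate: there is a single object, and the identity is the empty word $\emptyset$ with $\End_{\dihedralcat{}}(\emptyset)=\someR$ local, so the real work is (b), (c), (e).

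For (b) I would invoke \cite[Theorems 5.29 and 6.24]{El1}: for any two words $\word,\word^{\prime}$ in $\dicatsgen,\dicattgen$, the space $\twoHom_{\dihedralcatbig{}}(\word,\word^{\prime})$ is a free graded module of finite rank over the polynomial algebra $\polyalg$, with graded rank prescribed by the standard pairing on $\mathrm{H}$. Since $\polyalg$ is non-negatively graded, finite-dimensional in each degree, and satisfies $\polyalg_0=\someR$, every such free module is finite-dimensional in each degree with grading bounded below; this yields local finiteness, and it passes to finite direct sums and hence to the additive closure already built into $\dihedralcat{}$. I also record the sharper consequence I will need: the degree-zero part of $\twoHom_{\dihedralcatbig{}}(\word,\word^{\prime})$ is finite-dimensional over $\someR$ (of dimension the $\polyalg$-rank), so the degree-preserving endomorphism ring $\End_{\dihedralcat{}}(\word)$ of every $1$-morphism is a finite-dimensional $\someR$-algebra -- even though the full $2$-endomorphism ring is infinite-dimensional.

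For (c), passing to $\Kar(\dihedralcat{})^{\star}$ makes every hom-category idempotent complete by construction, and a retract of a locally finite space is locally finite, so (b) survives. For Krull--Schmidt it suffices that the degree-preserving endomorphism ring of each object of $\Kar(\dihedralcat{})$ is semiperfect; but such a ring is a corner $e\,\End_{\dihedralcat{}}(\word)\,e$ of the finite-dimensional $\someR$-algebra $\End_{\dihedralcat{}}(\word)$, hence finite-dimensional, hence semiperfect. For (e), $\Kar(\dihedralcat{})$ is now Krull--Schmidt with grading shifts acting freely on isomorphism classes of indecomposables, so its split Grothendieck group is a free $\Z[\vpar,\vpar^{-1}]$-module on the indecomposable $1$-morphisms taken up to shift; by \cite[Theorems 5.29 and 6.24]{El1} these are indexed by $\mathrm{W}$ -- by $\mathrm{W}_n$, a set of size $2n$, when $\qpar$ is a primitive $2n$th root of unity, and by $\mathrm{W}_{\infty}$, a countably infinite set, in the infinite case. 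Hence the rank is $2n$ in the finite case (graded finitary) and countably infinite in the infinite case (graded locally finitary).

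The only genuine obstacle is to quote \cite{El1} in exactly the right form: (i) that the $2$-hom spaces are \emph{free} $\polyalg$-modules of finite rank, and (ii) the classification of the indecomposables; both are stated over the polynomial algebra rather than over the coinvariant algebra of Mazorchuk--Miemietz. Granting (i) and (ii), the one subtlety to keep in mind is that only the \emph{degree-zero} slice of each $2$-hom space is finite-dimensional -- which is precisely what makes the semiperfectness step in (c) valid even though we work over $\polyalg$ rather than over a finite-dimensional algebra.
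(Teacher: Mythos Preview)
Your proposal is correct and follows the same route as the paper, which simply records the proposition as a direct consequence of \cite[Theorems~5.29 and 6.24]{El1} without further argument; you have supplied the bookkeeping that the paper omits. One small inaccuracy: the parenthetical ``of dimension the $\polyalg$-rank'' is not right in general, since the degree-zero slice of a free graded $\polyalg$-module $\bigoplus_i \polyalg\{\shiftme{a}_i\}$ has dimension $\sum_i \dim \polyalg_{-\shiftme{a}_i}$, which need not equal the rank --- but finite-dimensionality (which is all you use) still holds, so the argument is unaffected.
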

\subsection{Its \texorpdfstring{$2$}{2}-representations coming from bipartite graphs}\label{subsec:diacataction}

The purpose of the present subsection is to define the weak $2$-functors
\[
\functorGno\colon\dihedralcatbig{}\to\Endff(\cellcatG)
\]
(in our usual convention, we write $\functorGno$ for either 
$\functorG$ or $\functorADE$)
from \fullref{theorem:main-theorem}, for any given 
bipartite graph $\g$. As we will see in \fullref{subsec:infinite-case}, 
the $2$-functor $\functorGno$ is well-defined only for certain values of $\qpar\in\C-\{0\}$, which depend on $\g$.

Now, $\functorGno$ sends the unique object of $\dihedralcatbig{}$ to $\cellcatG$. In \fullref{subsec:quiver-stuff} 
we already defined $\functorGno$ on $1$-morphisms:
\[
\functorGno(\dicatsgen)=\disfun,
\quad\functorGno(\dicattgen)=\ditfun.
\]
Next, we define $\functorGno$ on 
$2$-morphisms, which we first 
do for $\dihedralcatbig{\infty}$ and then for $\dihedralcatbig{n}$. 

Moreover, we start with $\g$'s of $\ADE$ type (our main interest), and then discuss the 
generalization to arbitrary bipartite graphs.

\subsubsection{Assignment in the infinite case}\label{subsub:functor-infty}
Fix $\qpar\in\C-\{0\}$.
We have to assign natural transformations to the generators  
from \eqref{eq:dicat-gen1}. Recall that the functors $\functors$ and 
$\functort$ are given by tensoring with the sum 
of the $\algG$-bimodules 
$P_{\bbk{i}}\{\shiftme{-1}\}\otimes{}_{\bbk{i}}P$ over all 
{\color{sea}sea-green} $\bbii{i}$ and over all {\color{tomato}tomato} $\bbjj{j}$ colored 
vertices of $\g$, respectively. Thus, $\algG$-bimodule maps between tensor products of 
the $\algG$-bimodules $P_{\bbk{i}}\{\shiftme{-1}\}\otimes{}_{\bbk{i}}P$ induce 
natural transformations between the corresponding composites of 
$\functors$ and $\functort$. (This assignment is not 
strict, cf. \fullref{example:main-2-cats-first}.) Hence, we first specify a 
$\algG$-bimodule map for 
each of the generating Soergel diagrams and then check that our assignment preserves the 
relations \eqref{eq:far-comm} to \eqref{eq:barb2} of the two-color Soergel calculus. 

To understand our assignment below recall that, 
for all vertices $\bbk{i}\in\g$, there are 
free $\zq$-modules ${}_{\bbk{i}}P_{\bbk{i}}$ 
given by
\begin{gather}\label{eq:bimod-recall-first}
{}_{\bbk{i}}P_{\bbk{i}}
=
{}_{\bbk{i}}P\qgtimes P_{\bbk{i}}\cong \zq(\bbk{i})\oplus\zq(\bbk{i}|\bbk{i}),
\end{gather}
where the isomorphism, which we fix, is given by $y\qgtimes x\mapsto y\pathm x$. 
We will use \eqref{eq:bimod-recall-first} strategically below. 
Moreover, there are graded
$\algG$-bimodules
\begin{align*}
P_{\bbk{i}}\{\shiftme{-1}\}\otimes{}_{\bbk{i}}P
\cong
&
\left(
\zq(\bbk{i})\oplus\zq(\bbk{i}|\bbk{i})\oplus
{\textstyle\bigoplus_{\bbk{i}\gconnect{}\bbk{j}}}\,
\zq(\bbk{j}|\bbk{i})
\right)
\{\shiftme{-1}\}
\\
&\otimes
\left(
\zq(\bbk{i})
\oplus
\zq(\bbk{i}|\bbk{i})
\oplus
{\textstyle\bigoplus_{\bbk{i}\gconnect{}\bbk{j}}}\,
\zq(\bbk{i}|\bbk{j})
\right),
\end{align*}
\begin{gather*}
{}_{\bbk{j}}P_{\bbk{i}}
=
{}_{\bbk{j}}P\qgtimes P_{\bbk{i}}\cong\zq(\bbk{j}|\bbk{i}),
\end{gather*}
which easily follows from \eqref{eq:projectives} and the same isomorphism as before, respectively.
Hereby we recall that the left action is given 
by post-composition, and the right action by pre-composition of paths.

In the following we only give some of the $\algG$-bimodule maps. 
The other ones can be obtained from these via color inversion 
($\dicatsgen\rightleftarrows\dicattgen$) and 
interchanging $\graphs$ and $\grapht$.

Some of the maps below are weighted sums. 
The \textit{weights} $\lscalar_{\bbk{i}}$ are invertible elements 
of $\zq$, which depend on $\g$, and will be 
defined in \fullref{definition:these-numbers}.

In the assignment below, we fix $\bbii{i}\in\g$, 
while $\bbjj{j}$ always means a vertex of 
$\g$ connected to $\bbii{i}$. We then give each 
$\algG$-bimodule map only on certain basis elements. The rest of 
the map is determined by the 
fact that it is supposed to be a $\algG$-bimodule map. 
(We will check in \fullref{lemma:bimodule-maps} that our assignments are indeed $\algG$-bimodule maps.) 

In order to get started,
let $x_{\bbii{i}}\in P_{\bbii{i}}$ 
and ${}_{\bbii{i}}y\in {}_{\bbii{i}}P$, and
let us write $\bigoplus_{\bbii{i}}=\bigoplus_{\bbii{i}\in\g}$ etc. for short, 
and notations of the form $x_{\bbii{i}} \otimes {}_{\bbii{i}}y$ indicate 
that we take the corresponding entries from the direct sums. 
(We extend the below $\zq$-linearly.)
\medskip

\noindent \textit{Identity generators.}
To these we 
assign the corresponding identity maps.
\medskip

\noindent \textit{Dots.} We choose the following $\algG$-bimodule maps.
\renewcommand{\theequation}{\arabic{section}.d1}
\begin{align}\label{eq:assignment-dotsA-a}
\begin{tikzpicture}[anchorbase, scale=.3]
	\draw [very thick, sea] (0,-2) to (0,0);
	\node at (0,-2.35) {\tiny $\dicatsgen$};
	\node at (0,2.35) {\tiny $\phantom{a}$};
	\node at (0,0) {\Large $\bullets$};
\end{tikzpicture}
&\stackrel{\functorGno}{\rightsquigarrow}
\begin{cases}
{\textstyle\bigoplus_{\bbii{i}}}\, P_{\bbii{i}}\{\shiftme{-1}\}\otimes {}_{\bbii{i}} P & \to \algG,\\
x_{\bbii{i}} \otimes {}_{\bbii{i}}y & \mapsto 
x_{\bbii{i}}\pathm {}_{\bbii{i}}y,
\end{cases}
\end{align}
\renewcommand{\theequation}{\arabic{section}.d2}
\begin{align}\label{eq:assignment-dotsA-b}
\begin{tikzpicture}[anchorbase, scale=.3]
	\draw [very thick, sea] (0,2) to (0,0);
	\node at (0,2.35) {\tiny $\dicatsgen$};
	\node at (0,-2.35) {\tiny $\phantom{a}$};
	\node at (0,0) {\Large $\bullets$};
\end{tikzpicture}
&\stackrel{\functorGno}{\rightsquigarrow}
\begin{cases}
\algG&\to {\textstyle\bigoplus_{\bbii{i}}}\, P_{\bbii{i}}\{\shiftme{-1}\}\otimes {}_{\bbii{i}} P,\\
\bbii{i}&\mapsto \lscalar_{\bbii{i}}\cdot (\bbii{i} \otimes \bbii{i}\vert \bbii{i}+\bbii{i}\vert \bbii{i} \otimes \bbii{i}),\\
\bbjj{j}&\mapsto
{\textstyle\sum_{\bbii{i}\gconnect{}\bbjj{j}}}\,(\lscalar_{\bbii{i}}\cdot \bbjj{j}|\bbii{i}\otimes\bbii{i}|\bbjj{j}).
\end{cases}
\end{align}
\medskip

\noindent \textit{Trivalent vertices.}
Using the identification 
from \eqref{eq:bimod-recall-first}, we pick:
\renewcommand{\theequation}{\arabic{section}.t1}
\begin{gather}\label{eq:assignment-triA-a}
\begin{tikzpicture}[anchorbase, scale=.3]
	\draw [very thick, sea] (0,-2) to (0,0) to (-2,2);
	\draw [very thick, sea] (0,0) to (2,2);
	\node at (0,-2.35) {\tiny $\dicatsgen$};
	\node at (-2,2.35) {\tiny $\dicatsgen$};
	\node at (2,2.35) {\tiny $\dicatsgen$};
%
\end{tikzpicture}
\stackrel{\functorGno}{\rightsquigarrow}
\begin{cases}
{\textstyle\bigoplus_{\bbii{i}}}\, P_{\bbii{i}}\{\shiftme{-1}\}\otimes {}_{\bbii{i}}P
&\to 
{\textstyle\bigoplus_{\bbii{i}}}\, P_{\bbii{i}}\{\shiftme{-1}\}\otimes {}_{\bbii{i}}P_{\bbii{i}}\{\shiftme{-1}\}\otimes {}_{\bbii{i}}P,\\
x_{\bbii{i}}\otimes {}_{\bbii{i}}y&\mapsto  
x_{\bbii{i}}\otimes \bbii{i}\otimes {}_{\bbii{i}}y,
\end{cases}
\end{gather}
\renewcommand{\theequation}{\arabic{section}.t2}
\begin{gather}\label{eq:assignment-triA-b}
\begin{tikzpicture}[anchorbase, scale=.3]
	\draw [very thick, sea] (0,2) to (0,0) to (-2,-2);
	\draw [very thick, sea] (0,0) to (2,-2);
	\node at (0,2.35) {\tiny $\dicatsgen$};
	\node at (-2,-2.35) {\tiny $\dicatsgen$};
	\node at (2,-2.35) {\tiny $\dicatsgen$};
%
\end{tikzpicture}
\stackrel{\functorGno}{\rightsquigarrow}
\begin{cases}
{\textstyle\bigoplus_{\bbii{i}}}\, P_{\bbii{i}}\{\shiftme{-1}\}\otimes {}_{\bbii{i}}
P_{\bbii{i}}\{\shiftme{-1}\}\otimes {}_{\bbii{i}}P
&\to {\textstyle\bigoplus_{\bbii{i}}}\, P_{\bbii{i}}\{\shiftme{-1}\}\otimes {}_{\bbii{i}}P,\\
x_{\bbii{i}}\otimes \bbii{i} \otimes {}_{\bbii{i}}y & \mapsto 0,\\
x_{\bbii{i}}\otimes \bbii{i}|\bbii{i} \otimes {}_{\bbii{i}}y & \mapsto \lscalar_{\bbii{i}}^{-1}\cdot (x_{\bbii{i}}\otimes {}_{\bbii{i}}y).
\end{cases}
\end{gather}\renewcommand{\theequation}{\arabic{section}.\arabic{equation}}
\medskip

\noindent \textit{{\color{tomato}Tomato} $\dihtgen$ colored diagrams.}
We inverted the colors ($\dicatsgen\rightleftarrows\dicattgen$), including $\lscalar_{\bbii{i}}\rightleftarrows\lscalar_{\bbjj{j}}$.
\medskip

Our assignments above extend to any $2$-morphism in $\dihedralcatbig{\infty}$ which is written as 
a horizontal and vertical composite of the generators (using \eqref{eq:Godement}). 

\begin{example}\label{example:2functor-assignment}
Let us denote by $\nattrafo{d}$ the natural transformation 
induced by the $\algG$-bimodule 
map from \eqref{eq:assignment-dotsA-a}. 
Consider 
the following two natural transformations. 
\[
\begin{tikzpicture}[anchorbase, scale=.3]
	\draw [very thick, sea] (-2,-2) to (-2,2);
	\draw [very thick, sea] (0,-2) to (0,0);
	\node at (0,-2.35) {\tiny $\dicatsgen$};
	\node at (0,2.35) {\tiny $\phantom{a}$};
	\node at (-2,-2.35) {\tiny $\dicatsgen$};
	\node at (-2,2.35) {\tiny $\dicatsgen$};
	\node at (0,0) {\Large $\bullets$};
\end{tikzpicture}
\stackrel{\functorGno}{\rightsquigarrow}
\nattrafo{id}\hcomp\nattrafo{d}\colon
\functors\functors\Rightarrow\functors,
\quad\quad
\begin{tikzpicture}[anchorbase, scale=.3]
	\draw [very thick, sea] (2,-2) to (2,2);
	\draw [very thick, sea] (0,-2) to (0,0);
	\node at (0,-2.35) {\tiny $\dicatsgen$};
	\node at (0,2.35) {\tiny $\phantom{a}$};
	\node at (2,-2.35) {\tiny $\dicatsgen$};
	\node at (2,2.35) {\tiny $\dicatsgen$};
	\node at (0,0) {\Large $\bullets$};
\end{tikzpicture}
\stackrel{\functorGno}{\rightsquigarrow}
\nattrafo{d}\hcomp\nattrafo{id}\colon
\functors\functors\Rightarrow\functors.
\]
Now -- by the above -- we have 
using \eqref{eq:bimod-recall-first} (which we will do silently from now on)
\[
\nattrafo{id}\hcomp\nattrafo{d}
\leftrightsquigarrow
\begin{cases}
{\textstyle\bigoplus_{\bbii{i}}}\,
P_{\bbii{i}}\{\shiftme{-1}\}\otimes {}_{\bbii{i}}P_{\bbii{i}}\{\shiftme{-1}\}\otimes {}_{\bbii{i}}P
&
\to 
{\textstyle\bigoplus_{\bbii{i}}}\,P_{\bbii{i}}\{\shiftme{-1}\}\otimes {}_{\bbii{i}}P,\\
x_{\bbii{i}}\otimes {}_{\bbii{i}}y^{\prime}
\pathm x^{\prime}_{\bbii{i}}\otimes {}_{\bbii{i}}y
&\mapsto
x_{\bbii{i}}\otimes \left({}_{\bbii{i}}y^{\prime}
\pathm x^{\prime}_{\bbii{i}}\right)\pathm {}_{\bbii{i}}y.
\end{cases}
\]
\[
\nattrafo{d}\hcomp\nattrafo{id}
\leftrightsquigarrow
\begin{cases}
{\textstyle\bigoplus_{\bbii{i}}}\, P_{\bbii{i}}\{\shiftme{-1}\}\otimes {}_{\bbii{i}}P_{\bbii{i}}\{\shiftme{-1}\}\otimes {}_{\bbii{i}}P
&\to {\textstyle\bigoplus_{\bbii{i}}}\, P_{\bbii{i}}\{\shiftme{-1}\}\otimes {}_{\bbii{i}}P,\\
x_{\bbii{i}}\otimes {}_{\bbii{i}}y^{\prime}\pathm x^{\prime}_{\bbii{i}}\otimes {}_{\bbii{i}}y
&\mapsto
x_{\bbii{i}}\pathm\left({}_{\bbii{i}}y^{\prime}\pathm x^{\prime}_{\bbii{i}}\right)
\otimes {}_{\bbii{i}}y.
\end{cases}
\]
Hereby $x_{\bbii{i}},x^{\prime}_{\bbii{i}}\in P_{\bbii{i}}$ 
and ${}_{\bbii{i}}y,{}_{\bbii{i}}y^{\prime}\in {}_{\bbii{i}}P$, as before.
\end{example}

\subsubsection{Assignment in the finite case}\label{subsub:functor-finite}

Let $\g$ be of $\ADE$ type and $\qpar$ be -- as usual -- a complex, primitive 
$2n$th root of unity. 
We continue to write $\functorGno$, whose definition we need to complete on the $2n$-valent vertices.
To this end, recall $\Functor{\Theta}_{\overline{\dihsgen_{n}}}$ 
and $\Functor{\Theta}_{\overline{\dihtgen_{n}}}$ from \fullref{subsec:dihedral-stuff}.
\medskip

\textit{$2n$-valent vertices.} We assign the zero maps:
\renewcommand{\theequation}{\arabic{section}.$2n$v}
\begin{gather}\label{eq:assignment-2nvertex}
\begin{tikzpicture}[anchorbase, scale=.3]
	\draw [very thick, orchid] (-1,-2) to (-1,-1.5);
	\draw [very thick, sea] (1,-2) to (1,-1.5);
	\draw [very thick, orchid] (-1,2) to (-1,1.5);
	\draw [very thick, tomato] (1,2) to (1,1.5);
	\draw [thick] (-1.5,-1.5) rectangle (1.5,1.5);
	\node at (-1,2.35) {\tiny $\dicattsgen$};
	\node at (1,2.35) {\tiny $\dicattgen$};
	\node at (-1,-2.35) {\tiny $\dicatstgen$};
	\node at (1,-2.35) {\tiny $\dicatsgen$};
	\node at (0,-2.35) {\tiny $\,\cdots$};
	\node at (0,2.35) {\tiny $\,\cdots$};
	\node at (0,0) {\tiny $n$};
\end{tikzpicture}
\stackrel{\functorGno}{\rightsquigarrow} 0,
\quad\quad
\begin{tikzpicture}[anchorbase, scale=.3]
	\draw [very thick, orchid] (-1,-2) to (-1,-1.5);
	\draw [very thick, tomato] (1,-2) to (1,-1.5);
	\draw [very thick, orchid] (-1,2) to (-1,1.5);
	\draw [very thick, sea] (1,2) to (1,1.5);
	\draw [thick] (-1.5,-1.5) rectangle (1.5,1.5);
	\node at (-1,2.35) {\tiny $\dicattsgen$};
	\node at (1,2.35) {\tiny $\dicatsgen$};
	\node at (-1,-2.35) {\tiny $\dicatstgen$};
	\node at (1,-2.35) {\tiny $\dicattgen$};
	\node at (0,-2.35) {\tiny $\,\cdots$};
	\node at (0,2.35) {\tiny $\,\cdots$};
	\node at (0,0) {\tiny $n$};
\end{tikzpicture}
\stackrel{\functorGno}{\rightsquigarrow} 0.
\end{gather}
\renewcommand{\theequation}{\arabic{section}.\arabic{equation}}
These give rise to the zero natural transformations between 
$\Functor{\Theta}_{\overline{\dihsgen_{n}}}$ 
and $\Functor{\Theta}_{\overline{\dihtgen_{n}}}$, and 
between $\Functor{\Theta}_{\overline{\dihtgen_{n}}}$ 
and $\Functor{\Theta}_{\overline{\dihsgen_{n}}}$, respectively
Again, we extend everything horizontally (using \eqref{eq:Godement}).

\subsubsection{The weighting}\label{subsub:functor-infty-2}
Fix any $\qpar\in\C-\{0\}$. Recall that $V=\graphs\,{\textstyle\coprod}\,\grapht$ 
denotes the two-colored edge set of $\g$.

\begin{definition}\label{definition:these-numbers-2}
A \textit{weighting $\llscalar$
of $\g$} is an assignment
\[
V\to\C-\{0\},\quad
\bbii{i}\mapsto\lscalar_{\bbii{i}},\;
\bbjj{j}\mapsto\lscalar_{\bbjj{j}}.
\]
The scalars $\lscalar_{\bbk{i}}$ are called \textit{weights}.

Fixing an ordering of the vertices of $\g$ as in 
\fullref{subsub:basic-graphs-spectra} 
we can write
\[
\llscalar=
(\lscalar_{\bbii{i}_1},\dots,\lscalar_{\bbii{i}_{|\graphs|}},
\lscalar_{\bbjj{j}_1},\dots,\lscalar_{\bbjj{j}_{|\grapht|}}).
\]
Then the triple $(\g,\llscalar,\qpar)$ is 
called a \textit{\eqref{eq:barb2}-weighting} 
if
\begin{gather}\label{eq:eigenvector-problem}
A(\g)\llscalar=-[2]_{\qpar}\cdot\llscalar, 
\end{gather}
where $A(\g)$ is the adjacency matrix of $\g$ (in the evident ordering).
\end{definition}

Note that \eqref{eq:eigenvector-problem} is equivalent to 
\begin{gather}\label{eq:eigenvector-problem-2}
-[2]_{\qpar}\cdot\lscalar_{\bbk{i}}=
{\textstyle\sum_{\bbk{i}\gconnect{}\bbk{j}}}
\lscalar_{\bbk{j}},\;\text{for all }\bbk{i}\in\g.
\end{gather}

\begin{remark}\label{remark:bf-equi}
Let us explain where the 
condition \eqref{eq:eigenvector-problem-2} comes from. Recall that we 
assume that $2$ is invertible, and 
we can replace \eqref{eq:barb2-prime} by \eqref{eq:barb2}, cf. \fullref{remark:bf-equi-a}. 

Thus, in order for the corresponding 
sums of \eqref{eq:floating-barbell1} and \eqref{eq:floating-barbell2} 
to work out 
(these appear in the proof that 
\eqref{eq:barb2} holds in the 
$2$-representation), we need precisely 
condition \eqref{eq:eigenvector-problem-2} 
to be satisfied. Hence, the name \eqref{eq:barb2}-weighting.
\end{remark}

\begin{definition}\label{definition:these-numbers}
Let $\g$ be of $\ADE$ type and $n$ be its Coxeter number, 
where we use the conventions from \ref{enum:typeA}, \ref{enum:typeD} 
and \ref{enum:typeE}. Let $\qpar$ be the usual (fixed) complex, primitive $2n$-root of unity.

We define \eqref{eq:barb2}-weightings for these $\g$'s as follows.
The scalars $\lscalar_{\bbk{i}}$ do not depend on the two-coloring, and 
are given next to the vertices.
\begin{subequations} 
\begingroup
\renewcommand{\theequation}{$\typea{m}$}
\begin{gather}\label{eq:crazy-numbers-an}
\begin{tikzpicture}[anchorbase, scale=1]
	\draw [thick] (0,0) to (2.825,0);
	\draw [thick] (3.375,0) to (6.2,0);
	\node at (0,-.01) {\Large $\bullet$};
	\node at (1,-.01) {\Large $\bullet$};
	\node at (2,-.01) {\Large $\bullet$};
	\node at (3.1,-.01) {\Large $\,\cdots$};
	\node at (4.2,-.01) {\Large $\bullet$};
	\node at (5.2,-.01) {\Large $\bullet$};
	\node at (6.2,-.01) {\Large $\bullet$};
	\node at (0,-.3) {\tiny ${+}[1]_{\qpar}$};
	\node at (1,-.3) {\tiny ${-}[2]_{\qpar}$};
	\node at (2,-.3) {\tiny ${+}[3]_{\qpar}$};
	\node at (4.15,-.3) {\tiny ${\mp}[m{-}2]_{\qpar}$};
	\node at (5.25,-.3) {\tiny ${\pm}[m{-}1]_{\qpar}$};
	\node at (6.2,-.3) {\tiny ${\mp}[m]_{\qpar}$};
\end{tikzpicture},
\end{gather}
\endgroup
\end{subequations}
\addtocounter{equation}{-1}
\begin{subequations}
\begingroup
\renewcommand{\theequation}{$\typed{m}$}
\begin{gather}\label{eq:crazy-numbers-dn}
\begin{tikzpicture}[anchorbase, scale=1]
	\draw [thick] (1.55,0) to (3.375,0);
	\draw [thick] (3.925,0) to (5.8,0);
	\draw [thick] (6.3,-.866) to (5.8,0) to (6.3,.866);
	\node at (1.55,-.01) {\Large $\bullet$};
	\node at (2.55,-.01) {\Large $\bullet$};
	\node at (3.65,-.01) {\Large $\,\cdots$};
	\node at (4.8,0) {\Large $\bullet$};
	\node at (5.8,0) {\Large $\bullet$};
	\node at (6.3,.866) {\Large $\bullet$};
	\node at (6.3,-.866) {\Large $\bullet$};
	\node at (1.55,-.3) {\tiny ${+}[1]_{\qpar}$};
	\node at (2.55,-.3) {\tiny ${-}[2]_{\qpar}$};
	\node at (4.8,-.3) {\tiny ${\mp}[m{-}3]_{\qpar}$};
	\node at (6.5,0) {\tiny ${\pm}[m{-}2]_{\qpar}$};
	\node at (7.0,.866) {\tiny ${\mp}\nicefrac{$[m{-}1]$}{$2$}$};
	\node at (7.0,-.866) {\tiny ${\mp}\nicefrac{$[m{-}1]$}{$2$}$};
\end{tikzpicture},
\end{gather}
\endgroup
\end{subequations}
\addtocounter{equation}{-1}
\begin{subequations}
\begingroup
\renewcommand{\theequation}{$\typee{6}$}
\begin{gather}\label{eq:crazy-numbers-e6}
\begin{tikzpicture}[anchorbase, scale=1]
	\draw [thick] (0,0) to (4,0);
	\draw [thick] (2,0) to (2,1);
	\node at (0,-.01) {\Large $\bullet$};
	\node at (1,-.01) {\Large $\bullet$};
	\node at (2,-.01) {\Large $\bullet$};
	\node at (3,-.01) {\Large $\bullet$};
	\node at (4,-.01) {\Large $\bullet$};
	\node at (2,.99) {\Large $\bullet$};
	\node at (0,-.3) {\tiny ${+}[1]_{\qpar}$};
	\node at (1,-.3) {\tiny ${-}[2]_{\qpar}$};
	\node at (2,1.3) {\tiny ${-}\nicefrac{$[3]_{\qpar}$}{$[2]_{\qpar}$}$};
	\node at (2,-.3) {\tiny ${+}[3]_{\qpar}$};
	\node at (3,-.3) {\tiny ${-}[2]_{\qpar}$};
	\node at (4,-.3) {\tiny ${+}[1]_{\qpar}$};
\end{tikzpicture},
\end{gather}
\endgroup
\end{subequations}
\addtocounter{equation}{-1}
\begin{subequations}
\begingroup
\renewcommand{\theequation}{$\typee{7}$}
\begin{gather}\label{eq:crazy-numbers-e7}
\begin{tikzpicture}[anchorbase, scale=1]
	\draw [thick] (-1,0) to (4,0);
	\draw [thick] (2,0) to (2,1);
	\node at (-1,-.01) {\Large $\bullet$};
	\node at (0,-.01) {\Large $\bullet$};
	\node at (1,-.01) {\Large $\bullet$};
	\node at (2,-.01) {\Large $\bullet$};
	\node at (3,-.01) {\Large $\bullet$};
	\node at (4,-.01) {\Large $\bullet$};
	\node at (2,.99) {\Large $\bullet$};
	\node at (-1,-.3) {\tiny ${+}[1]_{\qpar}$};
	\node at (0,-.3) {\tiny ${-}[2]_{\qpar}$};
	\node at (1,-.3) {\tiny ${+}[3]_{\qpar}$};
	\node at (2,1.3) {\tiny ${+}\nicefrac{$[4]_{\qpar}$}{$[2]_{\qpar}$}$};
	\node at (2,-.3) {\tiny ${-}[4]_{\qpar}$};
	\node at (2.925,-.3) {\tiny ${+}\nicefrac{$[6]_{\qpar}$}{$[2]_{\qpar}$}$};
	\node at (4.075,-.3) {\tiny ${-}\nicefrac{$[4]_{\qpar}$}{$[3]_{\qpar}$}$};
\end{tikzpicture},
\end{gather}
\endgroup
\end{subequations}
\addtocounter{equation}{-1}
\begin{subequations}
\begingroup
\renewcommand{\theequation}{$\typee{8}$}
\begin{gather}\label{eq:crazy-numbers-e8}
\begin{tikzpicture}[anchorbase, scale=1]
	\draw [thick] (-2,0) to (4,0);
	\draw [thick] (2,0) to (2,1);
	\node at (-2,-.01) {\Large $\bullet$};
	\node at (-1,-.01) {\Large $\bullet$};
	\node at (0,-.01) {\Large $\bullet$};
	\node at (1,-.01) {\Large $\bullet$};
	\node at (2,-.01) {\Large $\bullet$};
	\node at (3,-.01) {\Large $\bullet$};
	\node at (4,-.01) {\Large $\bullet$};
	\node at (2,.99) {\Large $\bullet$};
	\node at (-2,-.3) {\tiny ${+}[1]_{\qpar}$};
	\node at (-1,-.3) {\tiny ${-}[2]_{\qpar}$};
	\node at (0,-.3) {\tiny ${+}[3]_{\qpar}$};
	\node at (1,-.3) {\tiny ${-}[4]_{\qpar}$};
	\node at (2,1.3) {\tiny ${-}\nicefrac{$[5]_{\qpar}$}{$[2]_{\qpar}$}$};
	\node at (2,-.3) {\tiny ${+}[5]_{\qpar}$};
	\node at (2.925,-.3) {\tiny ${-}\nicefrac{$[7]_{\qpar}$}{$[2]_{\qpar}$}$};
	\node at (4.075,-.3) {\tiny ${+}\nicefrac{$[5]_{\qpar}$}{$[3]_{\qpar}$}$};
\end{tikzpicture}.
\end{gather}
\endgroup
\end{subequations}
\addtocounter{equation}{-1}
This defines the triple $(\g,\llscalar,\qpar)$.
\end{definition}

\begin{remark}\label{remark:these-numbers}
These $\lscalar_{\bbk{i}}$'s can be found 
solving the eigenvector problem for the 
adjacency matrix of $\g$, cf. \eqref{eq:eigenvector-problem}. 
The weights are actually 
the entries of the Perron--Frobenius eigenvector 
-- normalized to have the entry $1$ at a fixed vertex --
for the smallest eigenvalue. (Recall hereby that the spectrum of a 
bipartite graph is a symmetric set. Hence, there is a unique smallest eigenvalue. 
See also in the proof of \fullref{lemma:FP-in-action}.)

The reader familiar with \cite{KO1} might also note that the weights above 
are equal to the quantum numbers associated 
to the vertices of $\g$ in \cite[Section 6]{KO1}, after division by 
the quantum number associated to the left-most vertex 
(and adding signs to match our conventions).
\end{remark}

It remains 
to prove well-definedness, 
which we will do in \fullref{sec:proofs}, i.e. we will 
show that the $2$-functor $\functorGno$ is well-defined if and only if 
the chosen weighting is \eqref{eq:barb2}.

\subsubsection{``Uniqueness'' of the \texorpdfstring{$2$}{2}-functor \texorpdfstring{$\functorGno$}{G}}\label{subsub:functor-unique}

For $\g$ of $\ADE$ type, the assignment from above is essentially unique:

\begin{lemma}\label{lemma:unique-maps}
Let $\g$ be of $\ADE$ type and fix $\qpar$ 
to be a complex, primitive $2n$th root of unity. 
For any additive, degree-preserving, $\C$-linear, weak $2$-functor
\[
\functorGs\colon\dihedralcatbig{\infty}\to\Endff(\cellcatG)
\]
which agrees with $\functorGno$ on $1$-morphisms, there exist  
scalars $\scalargt,\scalargtt\in\C-\{0\}$ 
such that
\[
\functorGs(f)=
\scalargt^{d_{\mathrm{st}}-t_{\mathrm{me}}}\cdot
\scalargtt^{d_{\mathrm{end}}-t_{\mathrm{sp}}}\cdot
\functorGno(f),
\]
for any homogeneous $2$-morphism $f$ built from 
$d_{\mathrm{st}}$ start dots, $t_{\mathrm{me}}$ merges, $d_{\mathrm{end}}$ end dots 
and $t_{\mathrm{sp}}$ splits.

In particular, $\functorGs$ is equivalent to $\functorGno$.
\end{lemma}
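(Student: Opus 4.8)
The plan is to pin down $\functorGs$ on the generating $2$-morphisms \eqref{eq:dicat-gen1} of $\dihedralcatbig{\infty}$ (there is no $2n$-valent vertex to worry about in the infinite case), to show that each generator must be sent to a scalar multiple of its image under $\functorGno$, and then to extract the two scalars $\scalargt,\scalargtt$ by feeding the defining relations into this. Throughout we use that $\functorGno$ itself already satisfies all the relations of $\dihedralcatbig{\infty}$ (this is the content of \fullref{sec:proofs}, applied to $\functorGno$), and that $\qpar$ is a primitive $2n$th root of unity, so that the weighting of \fullref{definition:these-numbers} is available and \eqref{eq:eigenvector-problem-2} holds.

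First I would use that $\functorGs$ and $\functorGno$ agree on the unique object and on the $1$-morphisms $\dicatsgen,\dicattgen$, hence on all $1$-morphisms, so that each is determined by its values on the generating $2$-morphisms. Since $\Endff(\cellcatG)$ is the $2$-category of biprojective endofunctors with natural transformations, and $\functors,\functort$ are tensoring with the explicit $\algG$-bimodules of \fullref{subsub:path-alg-bimod}, the relevant $2$-hom spaces are computed as spaces of homogeneous $\algG$-bimodule maps. Using bipartiteness (which gives $\functors\functors=\bigoplus_{\bbii{i}}\bifunctor_{\bbii{i}}\bifunctor_{\bbii{i}}$ and $\functort\functort=\bigoplus_{\bbjj{j}}\bifunctor_{\bbjj{j}}\bifunctor_{\bbjj{j}}$ by \eqref{eq:some-relations-infty}) together with the degree bounds on the bimodules in \eqref{eq:projectives}, one checks that in the relevant degrees the targets have a one-dimensional graded piece matching the degree in which the corresponding domain is generated. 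Hence $\functorGs$ sends every identity generator to an identity, and every (start/end) dot, merge and split to a scalar multiple — a priori depending on the vertex $\bbk{i}$ — of the corresponding component of the $\algG$-bimodule map fixed by $\functorGno$ in \eqref{eq:assignment-dotsA-a}--\eqref{eq:assignment-triA-b}.

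Next I would run the relations through this. Far-commutativity \eqref{eq:far-comm}, the associativity Frobenius relation \eqref{eq:frob1}, and the needle relation \eqref{eq:needle} are preserved for free, since on both sides every term acquires the same product of scalars (or both sides vanish). The unit Frobenius relations \eqref{eq:frob2} (and their mirrors) force, at each vertex, the start-dot scalar to be inverse to the merge scalar and the end-dot scalar to be inverse to the split scalar. The barbell relations \eqref{eq:barb1} and \eqref{eq:barb2-prime} (with their color-inverted and mirrored versions) are the crucial ones: evaluating them on the indecomposable projectives $P_{\bbk{i}}$ and using \eqref{eq:eigenvector-problem-2} — which is exactly what makes $\functorGno$ satisfy them, cf. \fullref{remark:bf-equi} — one finds that the scalars attached to neighbouring vertices are constrained, and since $\g$ is connected and all weights $\lscalar_{\bbk{i}}$ are non-zero (they are the Perron--Frobenius eigenvector, cf. \fullref{remark:these-numbers}), the product (start-dot scalar)$\cdot$(end-dot scalar) is the same at every vertex, while the two-color relation \eqref{eq:barb2-prime} identifies this common value across the two colors. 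After conjugating $\functorGs$ by a pseudonatural self-equivalence of $\functorGno$ that is the identity on $1$-morphisms — given by suitable non-zero scalars on the summands $\bifunctor_{\bbk{i}}$ of $\functors$ and $\functort$, which does not change the equivalence class — we may therefore assume all start-dot and merge scalars equal $\scalargt$ and $\scalargt^{-1}$, and all end-dot and split scalars equal $\scalargtt$ and $\scalargtt^{-1}$.

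Finally, a homogeneous $2$-morphism $f$ written as a $\vcomp$-$\hcomp$-composite of $d_{\mathrm{st}}$ start dots, $t_{\mathrm{me}}$ merges, $d_{\mathrm{end}}$ end dots, $t_{\mathrm{sp}}$ splits and identities satisfies $\functorGs(f)=\scalargt^{d_{\mathrm{st}}-t_{\mathrm{me}}}\scalargtt^{d_{\mathrm{end}}-t_{\mathrm{sp}}}\functorGno(f)$; this is well-defined independently of the chosen diagram for $f$ precisely because the relations are preserved, as arranged above. The last assertion follows because $f\mapsto\scalargt^{d_{\mathrm{st}}-t_{\mathrm{me}}}\scalargtt^{d_{\mathrm{end}}-t_{\mathrm{sp}}}f$ is a self-equivalence of $\dihedralcatbig{\infty}$ (invertible, with inverse given by $\scalargt^{-1},\scalargtt^{-1}$, and relation-preserving by the previous paragraph), so $\functorGs$ is $\functorGno$ precomposed with a self-equivalence, up to the conjugation used above. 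I expect the main obstacle to be the rigidity step together with the barbell bookkeeping: showing that the relations genuinely force each generator to become a \emph{scalar} multiple of the value of $\functorGno$ (rather than a more general perturbation of bimodule maps), and then tracking the vertex-wise scalars through \eqref{eq:barb1}, \eqref{eq:barb2-prime} and the connectedness of $\g$ so that exactly the two global parameters $\scalargt,\scalargtt$ survive.
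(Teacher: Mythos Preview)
Your approach matches the paper's closely: both argue by degree and bimodule considerations that each generator is sent to a (vertex-dependent) scalar multiple of $\functorGno$'s value, use \eqref{eq:frob2} to pair dot and trivalent scalars inversely, and then extract the remaining constraint from the barbell relations. The decisive step in both is that the combined barbell scalars must form a $-[2]_{\qpar}$-eigenvector of $A(\g)$, hence a scalar multiple of the fixed weighting $\llscalar$. The paper makes this explicit and invokes the one-dimensionality of that eigenspace via the tabulated $\ADE$ spectra \eqref{eq:spectrum}; your appeal to connectedness and Perron--Frobenius amounts to the same thing (for $\ADE$ with the chosen $\qpar$, the value $-[2]_{\qpar}$ is the least eigenvalue, which is simple for a connected bipartite graph), but you should state this precisely rather than leave it at ``neighbouring vertices are constrained''.

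Your extra conjugation by vertex-wise rescalings of the summands $\bifunctor_{\bbk{i}}$ is not in the paper; it is a legitimate way to absorb the residual vertex-dependence of the individual dot/trivalent scalars (which the eigenvector argument alone does not eliminate) and arguably tightens the paper's terse ``for any vertex'' at the end. Note, though, that after conjugating you are proving the displayed formula for the \emph{conjugated} $\functorGs$, not the original one---this is exactly what is needed for the ``In particular'' equivalence conclusion and for the application in the proof of \fullref{theorem:classification}, so nothing is lost.
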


\begin{lemma}\label{lemma:unique-maps2}
Any additive, degree-preserving, $\C$-linear, weak $2$-functor
\[
\functorGs\colon\dihedralcatbig{n}\to\Endff(\cellcatG)
\]
which agrees with $\functorGno$ on $1$-morphisms sends 
the $2n$-valent vertices to zero.
\end{lemma}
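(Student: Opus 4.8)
The plan is to reduce the claim to an elementary degree-and-colour count for $\algG$-bimodule maps. By the evident symmetry of the whole construction under simultaneously inverting the two colours $(\dicatsgen\rightleftarrows\dicattgen)$ and swapping $\graphs$ with $\grapht$, it suffices to show that $\functorGs$ sends one of the two generators in \eqref{eq:dicat-gen2}, say the $2n$-valent vertex $V\colon\word\Rightarrow\word^{\prime}$, to zero; here $\word$ and $\word^{\prime}$ denote the two alternating length-$n$ words in $\dicatsgen,\dicattgen$ labelling its bottom and top, and they have opposite colour in every letter. Since $\functorGs$ agrees with $\functorGno$ on $1$-morphisms, the endofunctors $\functorGs(\word)$ and $\functorGs(\word^{\prime})$ of $\cellcatG$ may be identified with the composites of $\functors$ and $\functort$ along $\word$ and $\word^{\prime}$, which are biprojective, hence given by tensoring over $\algG$ with biprojective graded $\algG$-bimodules $B_{\word}$ and $B_{\word^{\prime}}$. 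As $\functorGs$ is additive, $\someR$-linear and degree-preserving, $\functorGs(V)$ is a degree-zero natural transformation, so it corresponds to a degree-zero $\algG$-bimodule map $\phi\colon B_{\word}\to B_{\word^{\prime}}$, and it is enough to prove $\phi=0$.

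The next step is to describe $B_{\word}$ explicitly. Because $\word$ alternates in colour, no two consecutive factors of the composite carry the same colour, so by the definition of $\functors,\functort$ as direct sums of the $\bifunctor_{\bbk{i}}$ the bimodule $B_{\word}$ splits as a direct sum, indexed by colour-alternating walks $\bbk{i}_1\gconnect{}\bbk{i}_2\gconnect{}\dots\gconnect{}\bbk{i}_n$ in $\g$ whose vertex colours match the letters of $\word$, of bimodules of the form $P_{\bbk{i}_1}\{\shiftme{-1}\}\otimes{}_{\bbk{i}_1}P\qgtimes\dots\qgtimes P_{\bbk{i}_n}\{\shiftme{-1}\}\otimes{}_{\bbk{i}_n}P$ (no ``doubling'' summands as in the first case of \eqref{eq:some-relations-infty} occur, precisely because consecutive letters differ, so $\bbk{i}_k\neq\bbk{i}_{k+1}$). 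Each inner factor ${}_{\bbk{i}_k}P\qgtimes P_{\bbk{i}_{k+1}}\cong{}_{\bbk{i}_k}P_{\bbk{i}_{k+1}}$ is free over $\zq$ of rank one and concentrated in degree $1$ (by the discussion around \eqref{eq:bimod-recall-first}, using $\bbk{i}_k\gconnect{}\bbk{i}_{k+1}$ with $\bbk{i}_k\neq\bbk{i}_{k+1}$), so a telescoping computation identifies each summand, as a graded bimodule, with $\big(P_{\bbk{i}_1}\otimes{}_{\bbk{i}_n}P\big)\{\shiftme{-1}\}$, where $\bbk{i}_1,\bbk{i}_n$ are the two endpoints of the walk and carry the colours of the first and last letters of $\word$. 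In particular every such summand lives in degrees $\geq-1$, its degree-$(-1)$ part is one-dimensional and spanned by the element $\bbk{i}_1\otimes\bbk{i}_n$, this element generates the summand as a bimodule, and it sits at the idempotent pair $(\bbk{i}_1,\bbk{i}_n)$. Running the same analysis for $B_{\word^{\prime}}$ shows that its degree-$(-1)$ part is supported only at idempotent pairs whose two entries carry the colours of the first and last letters of $\word^{\prime}$.

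The conclusion is then immediate: $\phi$ is determined by its values on the degree-$(-1)$ generators $\bbk{i}_1\otimes\bbk{i}_n$ of $B_{\word}$, and being a degree-preserving bimodule map it must carry such a generator into the degree-$(-1)$ part of $\bbk{i}_1B_{\word^{\prime}}\bbk{i}_n$. But the first and last letters of $\word$ and of $\word^{\prime}$ have opposite colours, so neither $\bbk{i}_1$ nor $\bbk{i}_n$ has an admissible colour to occur in an idempotent pair supporting the degree-$(-1)$ part of $B_{\word^{\prime}}$; hence $\bbk{i}_1B_{\word^{\prime}}\bbk{i}_n$ vanishes in degree $-1$, $\phi$ kills every generator of $B_{\word}$, and therefore $\phi=0$, i.e.\ $\functorGs(V)=0$; the color-inverted $2n$-valent vertex is handled identically (and if $B_{\word}=0$, meaning no such walk exists, the claim is trivial). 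I expect the only genuinely delicate point to be the telescoping bookkeeping that identifies each summand of $B_{\word}$ with $\big(P_{\bbk{i}_1}\otimes{}_{\bbk{i}_n}P\big)\{\shiftme{-1}\}$ — correctly combining the $n$ shifts $\{\shiftme{-1}\}$ and the $n-1$ inner degree-$1$ strands into a net shift of $\{\shiftme{-1}\}$, and checking this is a bimodule isomorphism — since this is what simultaneously pins down the minimal degree $(-1)$ and the supporting idempotent pair; granting that, the colour comparison is a single line. (Note that neither the primitivity of $\qpar$ nor the $\ADE$ hypothesis enters the argument; they only serve to make $\dihedralcatbig{n}$ the $2$-category under consideration.)
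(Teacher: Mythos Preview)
Your argument is correct and substantially different from the paper's. The paper argues indirectly: since $\functorGs$ agrees with $\functorGno$ on $1$-morphisms, its decategorification coincides with $[\functorGno]$ and therefore annihilates $[\Functor{\Theta}_{\wnull}]$; this forces $\functorGs(\JWst{n})=0$, and then an identity in Elias' calculus (\cite[(6.16)]{El1}) transfers this vanishing to the $2n$-valent vertex. Your route bypasses both the Grothendieck-group step and the citation to \cite{El1}: you compute the source and target bimodules directly, locate their minimal-degree generators, and observe that the endpoint colours of $\word$ and $\word^{\prime}$ disagree, so no nonzero degree-zero bimodule map can exist. This is more elementary and, as you note, uses neither the $\ADE$ hypothesis nor the specific root of unity --- it works for any bipartite graph $\g$ for which the functors $\functors,\functort$ are defined. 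The paper's argument, on the other hand, is shorter to state once the machinery is in place and makes visible the conceptual link to the vanishing of the Jones--Wenzl idempotent. One small clarification you might add: the passage from ``degree-zero natural transformation between biprojective endofunctors'' to ``degree-zero $\algG$-bimodule map'' is the standard Eilenberg--Watts identification (evaluate at the regular module), and the weakness of $\functorGs$ is harmless because the structural $2$-isomorphisms are degree-preserving, so conjugating by them does not disturb your degree count.
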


\subsubsection{More general bipartite graphs}\label{subsub:functor-bi-graphs}
We will now briefly discuss \eqref{eq:barb2}-weightings for graphs which are not of $\ADE$ type. 

We first give two examples: 

\begin{example}\label{example:weighting-bi-graph}
Here are two examples for $\g$'s being not of $\ADE$ type:
\[
\begin{tikzpicture}[anchorbase, scale=1]
	\draw [thick] (-1,0) to (-.5,.866) to (.5,.866) to (1,0) to (.5,-.866) to (-.5,-.866) to (-1,0);
	\node at (-1,-.01) {\Large $\bullet$};
	\node at (-.5,.866) {\Large $\bullet$};
	\node at (.5,.866) {\Large $\bullet$};
	\node at (1,-.01) {\Large $\bullet$};
	\node at (.5,-.866) {\Large $\bullet$};
	\node at (-.5,-.866) {\Large $\bullet$};
	\node at (-1.4,0) {\tiny ${+}[1]_{\qpar}$};
	\node at (-.6,1.1) {\tiny ${-}[1]_{\qpar}$};
	\node at (.6,1.1) {\tiny ${+}[1]_{\qpar}$};
	\node at (1.45,0) {\tiny ${-}[1]_{\qpar}$};
	\node at (.6,-1.1) {\tiny ${+}[1]_{\qpar}$};
	\node at (-.6,-1.1) {\tiny ${-}[1]_{\qpar}$};
\end{tikzpicture}
,\quad
\begin{tikzpicture}[anchorbase, scale=1]
	\draw[thick] (1.5,1) to (.75,1) to (0,0);
	\draw[thick] (-1.5,1) to (-.75,1) to (0,0);
	\draw[thick] (1.5,-1) to (.75,-1) to (0,0);
	\draw[thick] (-1.5,-1) to (-.75,-1) to (0,0);
	\node at (0,0) {\Large $\bullet$};
	\node at (.75,1) {\Large $\bullet$};
	\node at (-.75,1) {\Large $\bullet$};
	\node at (.75,-1) {\Large $\bullet$};
	\node at (-.75,-1) {\Large $\bullet$};
	\node at (1.5,1) {\Large $\bullet$};
	\node at (-1.5,1) {\Large $\bullet$};
	\node at (1.5,-1) {\Large $\bullet$};
	\node at (-1.5,-1) {\Large $\bullet$};
	\node at (1.5,1.3) {\tiny ${+}[1]_{\qpar}$};
	\node at (-1.5,1.3) {\tiny ${+}[1]_{\qpar}$};
	\node at (1.5,-1.3) {\tiny ${+}[1]_{\qpar}$};
	\node at (-1.5,-1.3) {\tiny ${+}[1]_{\qpar}$};
	\node at (.75,1.3) {\tiny ${-}[2]_{\qpar}$};
	\node at (-.75,1.3) {\tiny ${-}[2]_{\qpar}$};
	\node at (.75,-1.3) {\tiny ${-}[2]_{\qpar}$};
	\node at (-.75,-1.3) {\tiny ${-}[2]_{\qpar}$};
	\node at (.45,0) {\tiny ${+}[3]_{\qpar}$};
\end{tikzpicture}.
\]
The first weighting is \eqref{eq:barb2} if and only 
if $-[2]_{\qpar}\cdot\pm[1]_{\qpar}=2\cdot\mp[1]_{\qpar}$, i.e. if and only if 
$q=1$. The second weighting is \eqref{eq:barb2} if and only if $-[4]_{\qpar}+3\cdot[2]_{\qpar}=0$ 
and all involved weights are non-zero. This happens if 
and only if 
$\qpar\in\{\scalebox{.9}{$\nicefrac{$1$}{$2$}$}\cdot(1\pm\sqrt{5}),
-\scalebox{.9}{$\nicefrac{$1$}{$2$}$}\cdot(1\pm\sqrt{5})\}$.  
\end{example}

In general, we will prove the following result. 

\begin{lemma}\label{lemma:FP-in-action}
For any bipartite graph $\g$, there 
is at least one value of 
$\qpar\in\C-\{0\}$ such that a \eqref{eq:barb2}-weighting exists.
\end{lemma}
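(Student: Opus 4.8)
The plan is to read off both the parameter $\qpar$ and the weighting $\llscalar$ directly from the Perron--Frobenius theory of the adjacency matrix $A(\g)$. Since $\g$ is a connected graph (as assumed in \fullref{subsub:basic-graphs}), $A(\g)$ is a symmetric, entrywise nonnegative, irreducible matrix. By Perron--Frobenius it has a largest eigenvalue $\rho\geq 0$, which is simple and admits a strictly positive eigenvector $\mathbf{v}$. Writing $\mathbf{v}=(\mathbf{v}_{\graphs},\mathbf{v}_{\grapht})$ in the block form of \eqref{eq:isadmatrix}, this says $A\mathbf{v}_{\grapht}=\rho\,\mathbf{v}_{\graphs}$ and $A^{\mathrm{T}}\mathbf{v}_{\graphs}=\rho\,\mathbf{v}_{\grapht}$, with every coordinate of $\mathbf{v}$ nonzero.

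First I would use bipartiteness to flip the sign of the eigenvalue: set $\llscalar=(\mathbf{v}_{\graphs},-\mathbf{v}_{\grapht})$. Then $A(\g)\llscalar=(-\rho\,\mathbf{v}_{\graphs},\rho\,\mathbf{v}_{\grapht})=-\rho\cdot\llscalar$, and all coordinates of $\llscalar$ are still nonzero, so $\llscalar$ is a \emph{nowhere-zero} eigenvector of $A(\g)$ for the eigenvalue $-\rho$. (This is precisely the ``smallest eigenvalue'' eigenvector mentioned in \fullref{remark:these-numbers}, and this is the single point where bipartiteness is needed, namely to guarantee that the chosen eigenvector has no zero entries, as the definition of a weighting $V\to\C-\{0\}$ requires.) The same formula covers the degenerate case of a one-vertex graph, where $\rho=0$ and $\llscalar$ is any nonzero scalar.

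Next I would choose $\qpar$ so that $-[2]_{\qpar}=-\rho$, that is, a root of $\qpar^{2}-\rho\,\qpar+1=0$; explicitly $\qpar=\tfrac{1}{2}\bigl(\rho\pm\sqrt{\rho^{2}-4}\bigr)$. Since the product of the two roots equals $1$, both roots lie in $\C-\{0\}$, so such a $\qpar$ exists (it is real for $\rho\geq 2$ and lies on the unit circle for $0\leq\rho<2$). For this value of $\qpar$ we have $A(\g)\llscalar=-\rho\cdot\llscalar=-[2]_{\qpar}\cdot\llscalar$, which is exactly condition \eqref{eq:eigenvector-problem} (equivalently \eqref{eq:eigenvector-problem-2}), so $(\g,\llscalar,\qpar)$ is a \eqref{eq:barb2}-weighting, proving the lemma.

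I do not expect a real obstacle here: the whole argument is short. The one genuinely substantive ingredient is that Perron--Frobenius combined with the symmetry of the spectrum of a bipartite graph forces the eigenvector attached to $-\rho$ to have all entries nonzero, which is what makes it a legitimate weighting; after that, the only remaining task is to solve the elementary quadratic $\qpar+\qpar^{-1}=\rho$ in $\C-\{0\}$.
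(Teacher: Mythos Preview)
Your proof is correct and follows essentially the same approach as the paper: invoke Perron--Frobenius to obtain a nowhere-vanishing eigenvector of $A(\g)$, then solve the quadratic $\qpar+\qpar^{-1}=\pm\rho$ in $\C-\{0\}$. The only difference is cosmetic: the paper uses the positive Perron--Frobenius eigenvector directly and solves $-[2]_{\qpar}=\rho$, whereas you first flip signs on one color class to pass to the eigenvalue $-\rho$ and then solve $-[2]_{\qpar}=-\rho$; your variant has the pleasant side effect (cf.\ \fullref{remark:these-numbers} and \fullref{remark:minus-signs}) of producing, in the $\ADE$ case, the same $\qpar$ as in \fullref{definition:these-numbers} rather than its negative.
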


Note that \fullref{lemma:FP-in-action} is not immediate, since 
we need all weights to be invertible.
\section{Proofs}\label{sec:proofs}
Finally, we give the proofs of all statements.
\subsection{The uncategorified story}\label{subsec:GG-story}

Fix a bipartite graph $\g$. The following lemma follows from 
the fact that the $P_i\{\shiftme{-1}\}\otimes {}_i P$'s are graded biprojective 
$\algG$-bimodules. 

\begin{lemman}\label{lemma:projective}
The functors $\functors$ and $\functort$ are degree-preserving and biprojective.\qedmake
\end{lemman}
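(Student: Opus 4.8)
The plan is to reduce everything to the individual summands $\bifunctor_{\bbk{i}}$ and then unwind the definitions from \fullref{example:main-2-cats}. Recall that $\functors = \bigoplus_{\bbii{i}\in\g}\bifunctor_{\bbii{i}}$ and $\functort = \bigoplus_{\bbjj{j}\in\g}\bifunctor_{\bbjj{j}}$, and that each $\bifunctor_{\bbk{i}}$ is given by tensoring with the $\algG$-bimodule $B_{\bbk{i}} := P_{\bbk{i}}\{\shiftme{-1}\}\otimes {}_{\bbk{i}}P$. A finite direct sum of biprojective (resp.\ degree-preserving) endofunctors is again biprojective (resp.\ degree-preserving), the former by the very definition of biprojective endofunctor, so it suffices to treat a single $\bifunctor_{\bbk{i}}$.

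First I would record that $P_{\bbk{i}}$ and ${}_{\bbk{i}}P$ are graded and free of finite $\zq$-rank for every vertex $\bbk{i}$ of $\g$, as made explicit in \fullref{example:projectives-via-arrows} and the conventions following it. Since the middle $\otimes$ is over $\zq$, this means that $B_{\bbk{i}}$, viewed as a left $\algG$-module, is a finite direct sum of grading shifts of the indecomposable projective $P_{\bbk{i}}$, and symmetrically, as a right $\algG$-module, a finite direct sum of grading shifts of ${}_{\bbk{i}}P$; hence $B_{\bbk{i}}$ is a graded biprojective $\algG$-bimodule, and $\bifunctor_{\bbk{i}} = B_{\bbk{i}}\qgtimes-$ is by definition a biprojective endofunctor. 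Left-projectivity of $B_{\bbk{i}}$ guarantees that $\bifunctor_{\bbk{i}}$ sends graded projectives to graded projectives, and by \eqref{eq:some-relations-infty} the image of each indecomposable $P_{\bbk{j}}$ is a finite direct sum of shifts of $P_{\bbk{i}}$, which are free of finite $\zq$-rank; by additivity $\bifunctor_{\bbk{i}}$ therefore restricts to an endofunctor of $\cellcatG$.

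For degree-preservation, I would note that on morphisms $\bifunctor_{\bbk{i}}$ acts as $\idmap_{B_{\bbk{i}}}\qgtimes-$; since $\idmap_{B_{\bbk{i}}}$ is homogeneous of degree $0$, the map $\idmap_{B_{\bbk{i}}}\qgtimes f$ has the same degree as $f$, so $\bifunctor_{\bbk{i}}$ is an additive, $\someR$-linear, degree-preserving functor, which together with the grading-shift bookkeeping places it among the $1$-morphisms of $\Endff(\cellcatG)$. Assembling the finitely many $\bifunctor_{\bbii{i}}$ and $\bifunctor_{\bbjj{j}}$ then yields the claim for $\functors$ and $\functort$. There is no real obstacle; the only point deserving attention is the uniform finite-rank/biprojectivity of $B_{\bbk{i}}$ over the arbitrary (possibly non-$\ADE$) bipartite graph $\g$, which is exactly supplied by the explicit bases for $P_{\bbk{i}}$ and ${}_{\bbk{i}}P$ from \fullref{example:projectives-via-arrows}.
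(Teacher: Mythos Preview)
Your proposal is correct and follows essentially the same approach as the paper: the paper simply states that the lemma follows from the fact that the $P_{\bbk{i}}\{\shiftme{-1}\}\otimes{}_{\bbk{i}}P$'s are graded biprojective $\algG$-bimodules and gives no further details, so your argument is just a careful unpacking of that one-line justification.
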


\begin{lemma}\label{lemma:graded-adjoint-infty}
The functors $\functors$ and $\functort$ are additive, $\zq$-linear 
and self-adjoint.
\end{lemma}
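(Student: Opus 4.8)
The first two properties carry no grading content. Writing $\functors=\bigoplus_{\bbii{i}\in\g}\bifunctor_{\bbii{i}}$ as a finite direct sum, it is enough to note that each $\bifunctor_{\bbk{i}}=P_{\bbk{i}}\{\shiftme{-1}\}\otimes_{\zq}{}_{\bbk{i}}P\qgtimes-$ is additive, being assembled from tensor product functors, and acts $\zq$-linearly on morphism spaces since $\zq$ is central and so acts through the identity everywhere; the same holds verbatim for $\functort$. As a finite direct sum of self-adjoint functors is self-adjoint, the real content is self-adjointness, and it suffices to produce, for each vertex $\bbk{i}$, a natural isomorphism $\Hom_{\cellcatG}(\bifunctor_{\bbk{i}}X,Y)\cong\Hom_{\cellcatG}(X,\bifunctor_{\bbk{i}}Y)$: read from either side, such an isomorphism makes $\bifunctor_{\bbk{i}}$ simultaneously a left and a right adjoint of itself.

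Fix $\bbk{i}$ and set $B_i=P_{\bbk{i}}\{\shiftme{-1}\}\otimes_{\zq}{}_{\bbk{i}}P$, so $\bifunctor_{\bbk{i}}=B_i\qgtimes-$. Using ${}_{\bbk{i}}P\qgtimes X\cong\bbk{i}X$ (via multiplication of paths) we rewrite $\bifunctor_{\bbk{i}}(X)\cong P_{\bbk{i}}\{\shiftme{-1}\}\otimes_{\zq}\bbk{i}X$. The plan is a double use of tensor--hom, once on each side, working with graded $\algG$-module maps and taking degree-zero parts at the end. On the one hand, $\Hom_{\algG}(P_{\bbk{i}},Y)\cong\bbk{i}Y$ (evaluation at $\bbk{i}$) gives a natural identification
\[
\Hom_{\algG}(\bifunctor_{\bbk{i}}X,Y)\;\cong\;\Hom_{\zq}(\bbk{i}X,\bbk{i}Y)\{\shiftme{c}\}
\]
for some fixed shift $\shiftme{c}$ determined by the $\{\shiftme{-1}\}$. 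On the other hand, the analogous computation of $\Hom_{\algG}(X,\bifunctor_{\bbk{i}}Y)$ brings in the $\zq$-dual $({}_{\bbk{i}}P)^{\ast}$ together with $\Hom_{\algG}(X,P_{\bbk{i}})$, and reduces the comparison of the two sides to the single input
\[
P_{\bbk{i}}\;\cong\;({}_{\bbk{i}}P)^{\ast}\{\shiftme{2}\}
\]
as graded left $\algG$-modules. This is a standard feature of zigzag algebras: $\algG$ is graded symmetric, with symmetrising form of degree $\shiftme{-2}$ given by the loop coefficient in $\bbk{i}\algG\bbk{i}$ -- indeed the pairing $\bbk{j}\algG\bbk{i}\times\bbk{i}\algG\bbk{j}\to\zq$, $(f,g)\mapsto(\text{loop coefficient of }f\pathm g)$, is perfect by \eqref{eq:partner} and \eqref{eq:bimod-recall-first} -- and its Nakayama permutation is trivial. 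Granting this, both $\Hom_{\algG}(\bifunctor_{\bbk{i}}X,Y)$ and $\Hom_{\algG}(X,\bifunctor_{\bbk{i}}Y)$ are naturally isomorphic to $\Hom_{\zq}(\bbk{i}X,\bbk{i}Y)$ shifted by the \emph{same} amount -- the whole point of the $\{\shiftme{-1}\}$ in $B_i$ is to make these shifts coincide -- so taking degree-zero parts yields the desired natural isomorphism, with degree-preserving unit and counit.

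An equivalent, more hands-on route -- the one I would fall back on if the Frobenius-type bookkeeping became unwieldy -- is to exhibit the adjunction unit $\functorid\Rightarrow\bifunctor_{\bbk{i}}\bifunctor_{\bbk{i}}$ and counit $\bifunctor_{\bbk{i}}\bifunctor_{\bbk{i}}\Rightarrow\functorid$ directly as $\algG$-bimodule maps built from the multiplication $\pathm$ and the loop-coefficient trace on $\bbk{i}\algG\bbk{i}$ (up to scalars these are the maps later attached to the dots in \fullref{subsub:functor-infty}), and then verify the two zigzag identities by evaluating on the explicit bases of $P_{\bbk{i}}$, ${}_{\bbk{i}}P$ and ${}_{\bbk{i}}P_{\bbk{i}}$ recorded in \eqref{eq:projectives}--\eqref{eq:bimod-recall-first}; this is a short finite check once the shifts are fixed, and is essentially the computation in \cite{AT1}. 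In either approach the one genuinely non-formal point is the grading bookkeeping around $\{\shiftme{-1}\}$ together with the identification $P_{\bbk{i}}\cong({}_{\bbk{i}}P)^{\ast}\{\shiftme{2}\}$ encoding the graded symmetric structure of $\algG$; additivity, $\zq$-linearity, the reduction to a single vertex, and the tensor--hom manipulations are routine.
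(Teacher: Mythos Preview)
Your proposal is correct. Interestingly, you and the paper each present two approaches, but with opposite emphasis.

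Your primary route---reducing to a single $\bifunctor_{\bbk{i}}$ and then invoking the graded symmetric (Frobenius) structure of the zigzag algebra $\algG$ via $P_{\bbk{i}}\cong({}_{\bbk{i}}P)^{\ast}\{\shiftme{2}\}$---is essentially what the paper mentions as an \emph{alternative} immediately after its own proof: one can check that $\algG$ is weakly symmetric and self-injective (as in \cite[Proposition~1]{HK1}) and then appeal to abstract machinery (the paper cites \cite[Section~7.3]{MM1}). Your version is more explicit than the paper's one-line pointer and is self-contained, not forward-referencing anything.

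Conversely, your ``hands-on fallback'' is the paper's \emph{primary} proof: it takes the cup and cap Soergel diagrams, sends them via $\functorGno$ to candidate unit and counit, and observes that the zigzag identities are exactly the isotopy relations of \fullref{example:iso-rels}, which follow from the Frobenius relations \eqref{eq:frob1}--\eqref{eq:frob2}; the verification of those is deferred to the well-definedness proof of $\functorGno$ later in the same section. The paper explicitly notes the trade-off: its approach ``fixes the natural transformations realizing the adjunctions'', which matters downstream since those very cup/cap maps are used throughout the construction of $\functorGno$. Your approach gives a cleaner standalone proof of the lemma but does not pin down the adjunction data in Soergel-diagrammatic terms.

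One minor point: your shift bookkeeping is asserted rather than checked; it does work out (both sides become $(\bbk{i}X)^{\ast}\otimes_{\zq}\bbk{i}Y$ with the same shift once $P_{\bbk{i}}\cong({}_{\bbk{i}}P)^{\ast}\{\shiftme{2}\}$ is fed in), but in a final version you would want to display at least one line of that computation.
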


\begin{proof}
We only treat the case of $\functors$ here, the other is similar and omitted.

Up to the self-adjointness of $\functors$ 
the statement is clear. 
Moreover, we claim that 
\[
\begin{tikzpicture}[anchorbase, scale=.3]
	\draw [very thick, sea] (0,2) to [out=270, in=180] (1,0) to [out=0, in=270] (2,2);
	\node at (0,2.35) {\tiny $\dicatsgen$};
	\node at (2,2.35) {\tiny $\dicatsgen$};
	\node at (0,-2.35) {\tiny $\phantom{a}$};
\end{tikzpicture}
\stackrel{\functorGno}{\rightsquigarrow}
\nattrafo{i}\colon\functor{ID}\Rightarrow\functors\functors,
\quad\quad
\begin{tikzpicture}[anchorbase, scale=.3]
	\draw [very thick, sea] (0,-2) to [out=90, in=180] (1,0) to [out=0, in=90] (2,-2);
	\node at (0,-2.35) {\tiny $\dicatsgen$};
	\node at (2,-2.35) {\tiny $\dicatsgen$};
	\node at (0,2.35) {\tiny $\phantom{a}$};
\end{tikzpicture}
\stackrel{\functorGno}{\rightsquigarrow}
\nattrafo{e}\colon\functors\functors\Rightarrow\functor{ID},
\]
form the unit respectively counit of the self-adjunction for $\functors$. 
Next, there are two things to check. Namely that $\functors$ 
is left adjoint to itself, and 
that it is right adjoint to itself, i.e. we have to show that
\begin{gather*}
\nattrafo{id}_{\functors}
=\nattrafo{e}\functors
\vcomp\functors\nattrafo{i},
\quad\quad
\nattrafo{id}_{\functors}
=\functors\nattrafo{e}
\vcomp\nattrafo{i}\functors.
\end{gather*}
Recalling the definitions 
of cup and cap from \fullref{example:pitchfork}, we see that graphically this is 
the ``zigzag relation'' from \fullref{example:iso-rels}.
Hence, the claim of self-adjointness follows from 
the well-definedness of $\functorGno$, which we show later in \fullref{subsec:infinite-case}. 
\end{proof}

Alternatively, after checking that 
the underlying quiver algebra is
weakly symmetric and self-injective 
-- which can be done by e.g. 
copying \cite[Proposition 1]{HK1} -- one could also 
use \cite[Section 7.3]{MM1} 
to prove \fullref{lemma:graded-adjoint-infty} 
abstractly. In contrast, our proof above fixes the natural 
transformations realizing the adjunctions.

\begin{proof}[Proof of \fullref{proposition:action-infty}]
First, by using \fullref{lemma:graded-adjoint-infty}, we see that the two functors $\functors$ 
and $\functort$ are exact and descent to the Grothendieck group.

Next -- using \eqref{eq:functor-projectives-infty} -- we get for $\bbii{i}\in\graphs$ that
\begin{gather*}
\functors\functors(P_{\bbii{i}})\cong P_{\bbii{i}}\{\shiftme{-2}\}\oplus P_{\bbii{i}}\oplus P_{\bbii{i}}\oplus P_{\bbii{i}}\{\shiftme{+2}\}
\cong\functors(P_{\bbii{i}})\{\shiftme{-1}\}\oplus\functors(P_{\bbii{i}})\{\shiftme{+1}\},\\
\functort\functort(P_{\bbii{i}})\cong{\textstyle\bigoplus_{\bbii{i}\gconnect{}\bbjj{j}}}\, P_{\bbjj{j}}\{\shiftme{-1}\}\oplus P_{\bbjj{j}}\{\shiftme{+1}\}
\cong\functort(P_{\bbii{i}})\{\shiftme{-1}\}\oplus\functort(P_{\bbii{i}})\{\shiftme{+1}\}.
\end{gather*}
We get a similar result for $P_{\bbjj{j}}$, with $\bbjj{j}\in\grapht$. Therefore, we get the following natural isomorphisms 
of degree-preserving functors:
\begin{gather*}
\functors\functors\cong\functors\{\shiftme{-1}\}\oplus\functors\{\shiftme{+1}\},\quad\quad
\functort\functort\cong\functort\{\shiftme{-1}\}\oplus\functort\{\shiftme{+1}\}.
\end{gather*}
Since these are the defining relations 
of $\mathrm{H}_{\infty}$ from \eqref{eq:def-relationsa}, we see that $\GG{\cellcatG}$ 
is indeed an $\mathrm{H}_{\infty}$-module. Next, 
choosing the evident basis of $\GG{\cellcatG}$ 
given by the $[P_{\bbk{i}}]$'s and 
using \eqref{eq:functor-projectives-infty}, one can see that  
$[\functors]$ and $[\functort]$ act as in \eqref{eq:action-downstairs}. 
This shows that $\zeta_{\mathrm{G}}$ is 
an $\mathrm{H}_{\infty}$-homomorphism. That $\zeta_{\mathrm{G}}$ is bijective is clear.
\end{proof}

\begin{proof}[Proof of \fullref{proposition:action-finite}]
The claim basically follows
by observing that the scalars from \eqref{eq:numbers} for the defining relations 
of the $\disgen$ and $\ditgen$ generators of $\mathrm{H}$ are 
the coefficients of the (normalized) 
Chebyshev polynomials (of the second kind) given by
\[
\tilde{U}_{0}=1,\quad\tilde{U}_1=X,
\quad\tilde{U}_{k+1}=X\,\tilde{U}_k-\tilde{U}_{k-1}\text{ for }k\in\Z_{\geq 1}. 
\]
To give a few more details: 
Given a polynomial in $X$, one can obtain 
a non-commutative polynomial in two variables -- 
say $\disgen$ and $\ditgen$ -- by 
replacing $X^k$ with an alternating string $\dots\disgen\ditgen\disgen$ 
of length $k$ (always having $\disgen$ to the right).
We write $\tilde{U}_{k}(\disgen,\ditgen)$ for the non-commutative polynomial obtained 
from $\tilde{U}_{k}$ in this way.
Then \eqref{eq:numbers} 
implies that $\klbasis_{\overline{\dihsgen_k}}$ is $\tilde{U}_{k}(\disgen,\ditgen)$. 

Now, if a representation of $\mathrm{H}_{\infty}$ factors through 
$\mathrm{H}_{n}$ and is annihilated by $\klbasis_{\wnull}$, 
then 
\[
\tilde{U}_{n{-}1}(\disgen,\ditgen)=0=\tilde{U}_{n{-}1}(\ditgen,\disgen). 
\]
It is not hard to 
deduce from this that the eigenvalues of the matrices associated to 
$\disgen$ and $\ditgen$ in the representations from 
\fullref{definition:module-for-G}
(these are -- up to base change -- of the form
$\begin{psmallmatrix}
[2]_{\vpar}^{|\graphs|} & A\\
0 & 0
\end{psmallmatrix}$ respectively $\begin{psmallmatrix}
0 & 0\\
A^{\mathrm{T}} & [2]_{\vpar}^{|\grapht|}
\end{psmallmatrix}$) have to 
be (multi)subsets of the (multi)subset of eigenvalues of $X\,\tilde{U}_{n{-}1}$.
This follows from \eqref{eq:eigenvalue} and the observation stated after it.

The (normalized) 
Chebyshev polynomials have only real roots which are given by 
$S_{\typea{m}}\subset\; ]-2,2[$ in \eqref{eq:spectrum} below.
Now, it follows 
from \cite[Theorem 2]{Smi1} that 
the largest eigenvalue of $AA^{\mathrm{T}}$ or 
$A^{\mathrm{T}}A$ is strictly 
less than $4$ if and only if 
$\g$ is as in \ref{enum:typeA}, \ref{enum:typeD} 
or \ref{enum:typeE}. (See also \cite[Theorem 3.1.3]{BH1} for a more recent proof 
using Perron--Frobenius theory.) 
This shows -- keeping \eqref{eq:eigenvalue} in mind -- that only a 
$\g$ as in \ref{enum:typeA}, \ref{enum:typeD} 
or \ref{enum:typeE} (if non-trivial) can give a well-defined action of $\mathrm{H}_n$.

Moreover, we have the following spectra:
\begin{gather}\label{eq:spectrum}
\begin{aligned}
S_{\typea{m}}&=\left\{2\cos\left(
\scalebox{.9}{$\nicefrac{$k\pi$}{$m{+}1$}$}
\right)
\middle|k=1,\dots,m
\right\}
,\\
S_{\typed{m}}&=\left\{2\cos\left(
\scalebox{.9}{$\nicefrac{$k\pi$}{$2m{-}2$}$}
\right)
\middle|k=1,3,5,\dots,2m-7,2m-5,2m-3
\right\}\cup\{0\},\\
S_{\typee{6}}&=\left\{2\cos\left(
\scalebox{.9}{$\nicefrac{$k\pi$}{$12$}$}
\right)
\middle|k=1,4,5,7,8,11
\right\}
,\\
S_{\typee{7}}&=\left\{2\cos\left(
\scalebox{.9}{$\nicefrac{$k\pi$}{$18$}$}
\right)
\middle|k=1,5,7,9,11,13,17
\right\},\\
S_{\typee{8}}&=\left\{2\cos\left(
\scalebox{.9}{$\nicefrac{$k\pi$}{$30$}$}
\right)
\middle|k=1,7,11,13,17,19,23,29
\right\}.
\end{aligned}
\end{gather}
This list is known, see e.g. \cite[Section 3.1.1]{BH1}. 
The type $\typea{m}$ spectrum is the (multi)set of roots of 
the polynomial $\tilde{U}_{m}$.
The fact that all $\ADE$ type graphs give 
well-defined actions of $\mathrm{H}_n$ then follows easily 
using these spectra.

In case 
$\klbasis_{\wnull}$ does not act as zero the explicit 
form of the matrices associated to 
$\disgen$ and $\ditgen$ immediately shows that any well-defined 
action of $\mathrm{H}_n$ has to be trivial, i.e. both -- $\disgen$ 
and $\ditgen$ --
have to act as zero.
\end{proof}

\begin{remark}\label{remark-affine-radius}
In \fullref{example:typeA-calc1} we have seen that 
the largest eigenvalue of $AA^{\mathrm{T}}$ and 
$A^{\mathrm{T}}A$ in type $\typeat{3}$ was $4$. This is true for all 
affine types $\typeAt$, $\typeDt$ and $\typeEt$ and these 
are also the only graphs with this property, see 
e.g. \cite[Section 3.1.1]{BH1}.
\end{remark}

The following is now a direct 
consequence of \fullref{proposition:action-finite}.
(Hereby $[\Functor{\Theta}_{\wnull}]$ corresponds to $\klbasis_{\wnull}$ under $\zeta_{\G}$ 
from \eqref{eq:matching-actions}.)

\begin{corollaryn}\label{corollary:w0-is-zero}
In the setup from \fullref{subsec:quiver-stuff}: 
$[\Functor{\Theta}_{\wnull}]=0$ 
if and only if $\g$ is as 
in \ref{enum:typeA}, \ref{enum:typeD} 
or \ref{enum:typeE} (if non-trivial).\qedmake 
\end{corollaryn}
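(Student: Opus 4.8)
The plan is to read this off from \fullref{proposition:action-finite} (and its proof) together with the translation dictionary \eqref{eq:matching-actions}. First I would make precise what $[\Functor{\Theta}_{\wnull}]$ means in the weak setup of \fullref{subsec:quiver-stuff}: for any bipartite graph $\g$ it is the endomorphism of $\GG{\cellcatG}$ obtained by transporting along $\zeta_{\G}$ the image, under the $\mathrm{H}_{\infty}$-action $\G\colon\mathrm{H}_{\infty}\to\End_{\Cv}(\cellrepgrg)$, of the element $\sum_{k}d^k_n\klbasis_{\overline{\dihsgen_k}}$ — that is, of the combination which becomes $\klbasis_{\wnull}$ once the action descends to $\mathrm{H}_{n}$ (cf. \eqref{eq:def-relations}, \eqref{eq:numbers}); here I use \fullref{proposition:action-infty} to identify $[\functors]$, $[\functort]$ with the operators of \eqref{eq:action-downstairs}, hence the class of the alternating composite $\Functor{\Theta}_{\overline{\dihsgen_k}}$ with $\G(\klbasis_{\overline{\dihsgen_k}})$. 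So $[\Functor{\Theta}_{\wnull}]=0$ is equivalent to the statement that $\sum_{k}d^k_n\klbasis_{\overline{\dihsgen_k}}$ acts as zero on $\cellrepgrg$.

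For the ``if'' direction, suppose $\g$ is as in \ref{enum:typeA}, \ref{enum:typeD} or \ref{enum:typeE} with Coxeter number $n$. By \fullref{proposition:action-finite} the $\mathrm{H}_{\infty}$-action on $\cellrepgrg$ descends to an $\mathrm{H}_{n}$-action, so there $\sum_{k}d^k_n\klbasis_{\overline{\dihsgen_k}}=\klbasis_{\wnull}$. Since $\disgen$ acts non-trivially on $\cellrepgrg$ (for instance $\disgen\cdot\bbii{i}=[2]_{\vpar}\cdot\bbii{i}\neq 0$), the last paragraph of the proof of \fullref{proposition:action-finite} — which observes that, for these graphs, any well-defined $\mathrm{H}_{n}$-action on which $\klbasis_{\wnull}$ does not act as zero must be trivial — forces $\klbasis_{\wnull}$ to act as zero here, i.e. $[\Functor{\Theta}_{\wnull}]=0$.

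For the ``only if'' direction, assume $[\Functor{\Theta}_{\wnull}]=0$, so $\sum_{k}d^k_n\klbasis_{\overline{\dihsgen_k}}$ acts as zero on $\cellrepgrg$. Now I would invoke verbatim the eigenvalue computation in the proof of \fullref{proposition:action-finite}: recalling that the $d^k_n$ are the coefficients of the (normalised) Chebyshev polynomial $\tilde{U}_{n-1}$, vanishing of this operator yields the constraint that the eigenvalues of the matrices of \eqref{eq:action-downstairs} are governed by the roots of $X\,\tilde{U}_{n-1}$, which via \eqref{eq:eigenvalue} bounds the largest eigenvalue of $AA^{\mathrm{T}}$ (equivalently of $A^{\mathrm{T}}A$) strictly below $4$; by \cite[Theorem 2]{Smi1} (see also \cite[Theorem 3.1.3]{BH1}) this happens precisely when $\g$ is of type $\typeA$, $\typeD$ or $\typeE$, and the explicit spectra \eqref{eq:spectrum} pin down the Coxeter number as the $n$ appearing in \ref{enum:typeA}, \ref{enum:typeD}, \ref{enum:typeE}. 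The only real work is the bookkeeping of the first paragraph — pinning down the meaning of $[\Functor{\Theta}_{\wnull}]$ for non-$\ADE$ graphs and matching it with $\G\big(\textstyle\sum_k d^k_n\klbasis_{\overline{\dihsgen_k}}\big)$ — everything downstream being a direct quotation from the proof of \fullref{proposition:action-finite}; I expect that bookkeeping, rather than any genuinely new argument, to be the (minor) obstacle.
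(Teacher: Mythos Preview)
Your proposal is correct and takes the same approach as the paper, which simply records the corollary as a direct consequence of \fullref{proposition:action-finite} without further detail. Your first paragraph — making precise that $[\Functor{\Theta}_{\wnull}]$ should be read as the action of $\sum_k d^k_n\,\klbasis_{\overline{\dihsgen_k}}$ transported via $\zeta_{\G}$ — is exactly the bookkeeping the paper leaves implicit, and the two directions you sketch are the intended ones.
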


We finish this section with the proof of \fullref{lemma:FP-in-action}, because 
its proof is very much in the spirit of the proof of 
\fullref{proposition:action-finite} above.

\begin{proof}[Proof of \fullref{lemma:FP-in-action}]
First note that Perron--Frobenius theory guarantees an 
eigenvector $\llscalar_{\alpha}$ of $A(\g)$ with strictly positive entries. 
Moreover, the corresponding 
(so-called) Perron--Frobenius eigenvalue $\alpha$ is strictly positive.

Thus, after letting $\qpar$ to be a solution of the equation 
$-[2]_{\qpar}=\alpha$, we get a 
solution to \eqref{eq:eigenvector-problem}, 
i.e. a $\llscalar_{\alpha}$ without zero entries and a $\qpar\in\C-\{0\}$ 
such that \eqref{eq:eigenvector-problem} holds.
\end{proof}

\begin{remark}\label{remark:minus-signs}
Note that the proof of \fullref{lemma:FP-in-action} is constructive 
and can be used to produce \eqref{eq:barb2}-weightings for 
any given bipartite graph.

However, for $\ADE$ type graphs the Perron--Frobenius eigenvalue 
does not give $\qpar=\exp(\scalebox{.9}{$\nicefrac{$\pi i$}{$n$}$})$, but rather 
$\qpar=-\exp(\scalebox{.9}{$\nicefrac{$\pi i$}{$n$}$})$. 
For example, for $n=3$ and type $\typea{2}$ the Perron--Frobenius eigenvalue gives 
$\qpar=-\exp(\scalebox{.9}{$\nicefrac{$\pi i$}{$3$}$})$, for which $-[2]_{\qpar}=1$, 
rather than $\qpar=\exp(\scalebox{.9}{$\nicefrac{$\pi i$}{$3$}$})$, for which $-[2]_{\qpar}=-1$.
\end{remark}

Since the Kazhdan--Lusztig basis elements of $\mathrm{H}_{\infty}$ are categorified by 
indecomposable $1$-morphisms in $\Kar(\dihedralcatbig{\infty})$, see \cite[Theorem 5.29]{El1},
we get a stronger version of \fullref{corollary:w0-is-zero} by 
\fullref{theorem:main-theorem} later on. Namely, 
the matrices associated to $[\Functor{\Theta}_{\word}]$ have non-negative 
entries for all $\word\in\mathrm{W}_{\infty}$ 
if and only if $\g$ is not of type $\ADE$.
The ``if'' part of this statement
follows from \fullref{corollary:w0-is-zero} and the Chebyshev recursion; the ``only if'' part
needs \fullref{theorem:main-theorem}. (Note hereby that for $\g$ being of type 
$\ADE$ we
need the quantum parameter to be a root of unity to have a 
well-defined $2$-functor and \cite[Theorem 5.29]{El1} does not apply anymore. Hence, there is no
contradiction to \fullref{theorem:main-theorem}.)
\subsection{The infinite case}\label{subsec:infinite-case}

First, we need to check that the maps 
specified in \fullref{subsec:diacataction} 
are actually $\algG$-bimodule maps and give rise to 
natural transformations.

\begin{lemma}\label{lemma:bimodule-maps}
The maps from \eqref{eq:assignment-dotsA-a} 
to \eqref{eq:assignment-triA-b} are $\algG$-bimodule maps.
\end{lemma}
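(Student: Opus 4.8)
The plan is to run through the generating Soergel diagrams of \eqref{eq:dicat-gen1} one at a time and, for the $\algG$-bimodule map that $\functorGno$ attaches to each, verify left and right $\algG$-linearity by a direct check. The tools will be the explicit $\zq$-bases of the tensor-product bimodules recorded in \fullref{subsub:functor-infty}, in particular the identification \eqref{eq:bimod-recall-first} of ${}_{\bbk{i}}P\qgtimes P_{\bbk{i}}$ with the free $\zq$-module $\zq(\bbk{i})\oplus\zq(\loopy{\bbk{i}})$, together with the three defining relations of $\algG$: composites of two non-partner arrows vanish \eqref{eq:two-steps}, composites of three arrows vanish \eqref{eq:three-steps}, and every partner composite equals the corresponding loop \eqref{eq:partner}. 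Since the assignment is invariant under color inversion ($\dicatsgen\rightleftarrows\dicattgen$) together with $\graphs\leftrightarrow\grapht$ and $\lscalar_{\bbii{i}}\rightleftarrows\lscalar_{\bbjj{j}}$, it suffices to treat the displayed ``sea-green'' versions.

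Most cases are immediate. The identity generators get identity maps. For the end dots \eqref{eq:assignment-dotsA-a}, the assigned map ${\textstyle\bigoplus_{\bbii{i}}}P_{\bbii{i}}\{\shiftme{-1}\}\otimes{}_{\bbii{i}}P\to\algG$ is $x_{\bbii{i}}\otimes{}_{\bbii{i}}y\mapsto x_{\bbii{i}}\pathm{}_{\bbii{i}}y$; since $x_{\bbii{i}}$ is a path from $\bbii{i}$ and ${}_{\bbii{i}}y$ a path to $\bbii{i}$ the product is a well-defined element of $\algG$, the map is $\zq$-bilinear, and $\algG$-bilinearity is just associativity of path composition, $a\pathm(x_{\bbii{i}}\otimes{}_{\bbii{i}}y)\pathm b=(a\pathm x_{\bbii{i}})\otimes({}_{\bbii{i}}y\pathm b)\mapsto a\pathm x_{\bbii{i}}\pathm{}_{\bbii{i}}y\pathm b$. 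For the trivalent vertices \eqref{eq:assignment-triA-a} and \eqref{eq:assignment-triA-b}, by \eqref{eq:bimod-recall-first} the middle tensor factor ${}_{\bbii{i}}P\qgtimes P_{\bbii{i}}$ is the free $\zq$-module $\zq(\bbii{i})\oplus\zq(\loopy{\bbii{i}})$, on which neither $\algG$-action does anything; the split map merely inserts the idempotent $\bbii{i}$ into this inert slot, while the merge map kills $\zq(\bbii{i})$ and sends $\zq(\loopy{\bbii{i}})$ back by the scalar $\lscalar_{\bbii{i}}^{-1}$. In each case the $\algG$-actions touch only the outer $P_{\bbii{i}}\{\shiftme{-1}\}$ and ${}_{\bbii{i}}P$ factors, so the maps are $\algG$-bilinear; here one uses that the weights $\lscalar_{\bbii{i}}$ are invertible in $\zq$.

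The one genuine computation is for the start dots \eqref{eq:assignment-dotsA-b}, which is a map out of the bimodule $\algG$ underlying $\functorid$ and is prescribed only on the idempotents $\bbk{i}$. An $\algG$-bimodule map $\algG\to M$ is the same as an element $m\in M$ with $am=ma$ for all $a$, namely $m=\sum_{\bbk{i}}f(\bbk{i})$; so I must check that (i) each prescribed value $f(\bbii{i})=\lscalar_{\bbii{i}}(\bbii{i}\otimes\loopy{\bbii{i}}+\loopy{\bbii{i}}\otimes\bbii{i})$ and $f(\bbjj{j})={\textstyle\sum_{\bbii{i}\gconnect{}\bbjj{j}}}\lscalar_{\bbii{i}}\,\bbjj{j}|\bbii{i}\otimes\bbii{i}|\bbjj{j}$ lies in the $(\bbk{i},\bbk{i})$-component of the target (immediate from idempotency of the $\bbk{i}$ and the path conventions), and (ii) for both orientations of each edge $\bbii{i}\gconnect{}\bbjj{j}$ the intertwining identity $\bbk{j}|\bbk{i}\cdot f(\bbk{i})=f(\bbk{j})\cdot\bbk{j}|\bbk{i}$ holds. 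Each such identity is a three-line check: acting on the appropriate tensor slot, \eqref{eq:three-steps} kills the ``arrow $\circ$ loop'' contribution coming from the $\loopy{\bbii{i}}$-term of $f(\bbii{i})$, \eqref{eq:two-steps} kills all summands of $f(\bbjj{j})$ except the one indexed by $\bbii{i}$, and \eqref{eq:partner} turns the surviving partner composite into $\loopy{\bbii{i}}$, so that both sides collapse to $\lscalar_{\bbii{i}}\,\bbjj{j}|\bbii{i}\otimes\loopy{\bbii{i}}$ (resp.\ $\lscalar_{\bbii{i}}\,\loopy{\bbii{i}}\otimes\bbii{i}|\bbjj{j}$). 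Hence $m$ is central, the start dot map is a well-defined $\algG$-bimodule map, and the remaining maps follow by the color/$\graphs$-$\grapht$ symmetry. I expect step (ii) -- seeing that the weighted sums in \eqref{eq:assignment-dotsA-b} are precisely what make these intertwining identities work -- to be the only mildly delicate point; the substantive work, that all of these maps also respect the Soergel relations \eqref{eq:far-comm}--\eqref{eq:barb2}, is carried out later in \fullref{subsec:infinite-case}.
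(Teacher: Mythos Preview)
Your proposal is correct and follows essentially the same route as the paper: the multiplication map for the end dot, the observation that only the inert middle factor is touched for the trivalent vertices, and a direct case check on arrows for the start dot. Your packaging of the start-dot verification as ``a bimodule map $\algG\to M$ is a central element $m=\sum_{\bbk{i}}f(\bbk{i})$, hence one only needs $\alpha\cdot m=m\cdot\alpha$ on each arrow $\alpha$'' is a tidy reformulation of exactly the check the paper sketches with the example $\nattrafo{d}^*(\bbjj{j}|\bbii{i})$; the computations coincide line for line.
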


\begin{proof}
The map 
from \eqref{eq:assignment-dotsA-a} is the (scaled) multiplication map 
and thus, a $\algG$-bimodule map. 
The two maps from \eqref{eq:assignment-triA-a} and \eqref{eq:assignment-triA-b} 
clearly intertwine the left and right action of $\algG$ since their definition only involves 
the middle tensor factors in a non-trivial way. That the map 
from \eqref{eq:assignment-dotsA-b} is a $\algG$-bimodule map can be checked
via a case-by-case calculation. We illustrate this in an example. To this end, let us denote 
the {\color{sea}sea-green} version of it by $\nattrafo{d}^*$. 
Then, recalling \eqref{eq:partner}, we get
\begin{gather*}
\nattrafo{d}^*(\bbjj{j}|\bbii{i})
=
\nattrafo{d}^*(\bbjj{j}|\bbii{i}\pathm\bbii{i})
=
\bbjj{j}|\bbii{i}\pathm\nattrafo{d}^*(\bbii{i})
=
\lscalar_{\bbii{i}}\cdot
\bbjj{j}|\bbii{i}\pathm 
(\bbii{i}\otimes\bbii{i}|\bbii{i}+\bbii{i}|\bbii{i} \otimes \bbii{i})
=
\lscalar_{\bbii{i}}\cdot
\bbjj{j}|\bbii{i}
\otimes
\bbii{i}|\bbii{i},
\\
\nattrafo{d}^*(\bbjj{j}|\bbii{i})
=
\nattrafo{d}^*(\bbjj{j}\pathm\bbjj{j}|\bbii{i})
=
\nattrafo{d}^*(\bbjj{j})\pathm\bbjj{j}|\bbii{i}
=
{\textstyle\oplus_{\bbii{i}\gconnect{}\bbjj{j}^{\prime}}}\,\lscalar_{\bbii{i}}\cdot (\bbjj{j}^{\prime}|\bbii{i}\otimes\bbii{i}|\bbjj{j}^{\prime})
\pathm\bbjj{j}|\bbii{i}
\overset{\eqref{eq:partner}}{=}
\lscalar_{\bbii{i}}\cdot
\bbjj{j}|\bbii{i}
\otimes
\bbii{i}\vert\bbii{i}.
\end{gather*}
The remaining cases can be checked verbatim.
\end{proof}

The next lemmas follow directly from the definitions 
respectively via direct computation, and the proofs are omitted.

\begin{lemman}\label{lemma:nat-trafo}
The maps from \eqref{eq:assignment-dotsA-a} 
to \eqref{eq:assignment-triA-b} induce $2$-morphisms in $\Endff(\cellcatG)$. Moreover, 
the extension of the local assignment to arbitrary Soergel diagrams 
is consistent with the $2$-structure of $\Endff(\cellcatG)$.\qedmake
\end{lemman}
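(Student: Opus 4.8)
The plan is to check in turn the three assertions bundled into the statement: that each $\algG$-bimodule map attached in \eqref{eq:assignment-dotsA-a} to \eqref{eq:assignment-triA-b} to a generating Soergel diagram induces a natural transformation of the prescribed degree, that horizontal composition via the Godement product \eqref{eq:Godement} is respected by the assignment, and hence that the local assignment extends coherently to every $\vcomp$-$\hcomp$-composite of the generators, i.e.\ to every Soergel diagram. Verifying that the defining relations of $\dihedralcatbig{\infty}$ (and, in the finite case, of $\dihedralcatbig{n}$) are preserved is a separate issue, settled later in \fullref{subsec:infinite-case}.

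First I would recall the general mechanism. Each word $\word=\word_l\cdots\word_1$ in $\dicatsgen,\dicattgen$ is sent by $\functorGno$ to the endofunctor obtained by tensoring over $\algG$ with the ordered tensor product $M_{\word}$ of the biprojective $\algG$-bimodules $\bigoplus_{\bbii{i}}P_{\bbii{i}}\{\shiftme{-1}\}\otimes{}_{\bbii{i}}P$ (for each $\dicatsgen$) and $\bigoplus_{\bbjj{j}}P_{\bbjj{j}}\{\shiftme{-1}\}\otimes{}_{\bbjj{j}}P$ (for each $\dicattgen$); as each factor is biprojective, $M_{\word}$ is again a biprojective $\algG$-bimodule. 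A homogeneous $\algG$-bimodule map $\varphi\colon M_{\word}\{\shiftme{a}\}\to M_{\word^{\prime}}\{\shiftme{b}\}$ of degree $\shiftme{b}-\shiftme{a}$ induces, via $\varphi\qgtimes(-)$, a homogeneous natural transformation of degree $\shiftme{b}-\shiftme{a}$ between the corresponding endofunctors of $\cellcatG$; naturality is merely the functoriality of $M_{\word}\qgtimes(-)$ and $M_{\word^{\prime}}\qgtimes(-)$, and the assignment $\varphi\mapsto\varphi\qgtimes(-)$ is additive, $\zq$-linear and compatible with composition. By \fullref{lemma:bimodule-maps} the maps \eqref{eq:assignment-dotsA-a} to \eqref{eq:assignment-triA-b} are $\algG$-bimodule maps, and a short inspection of the formulas --- using the identification \eqref{eq:bimod-recall-first} and the grading shift conventions fixed in \fullref{subsec:basics} --- shows they are homogeneous of degree $+1$ for the two dots and $-1$ for the two trivalent vertices, which are exactly the degrees attached to the corresponding generators in \eqref{eq:dicat-gen1}. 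Hence each generating Soergel diagram is sent to a genuine $2$-morphism in $\Endff(\cellcatG)$ (the zero maps \eqref{eq:assignment-2nvertex} assigned to the $2n$-vertices in the finite case being trivially natural).

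Next I would record the compatibility with the $2$-structure. Under the bimodule description, the Godement product \eqref{eq:Godement} of two natural transformations of the above type is induced by the $\algG$-bimodule map $\varphi\otimes_{\algG}\psi$ --- well defined because all bimodules in sight are biprojective --- up to the canonical associativity and unit isomorphisms of the iterated tensor product $M_{\word}$; in particular $\hcomp$ of two assigned $2$-morphisms is again of this form, hence lands in $\Endff(\cellcatG)$, and the structural $2$-isomorphisms $\functorGno(\word)\functorGno(\word^{\prime})\Rightarrow\functorGno(\word\word^{\prime})$ obtained from those canonical isomorphisms satisfy the pentagon and triangle axioms. Since vertical composition of diagrams corresponds to composition of natural transformations and the interchange identity on the target holds because $-\otimes_{\algG}-$ is a bifunctor, the local assignment extends --- by $\vcomp$ and $\hcomp$, using \eqref{eq:Godement} --- to a well-defined additive, $\zq$-linear, degree-preserving weak $2$-functor on all Soergel diagrams built from \eqref{eq:dicat-gen1} (and \eqref{eq:dicat-gen2} in the finite case), which is exactly the asserted consistency with the $2$-structure of $\Endff(\cellcatG)$.

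The one point that needs genuine care is organizational rather than computational: $\Endff(\cellcatG)$ is a bicategory, so the composite of the $1$-morphisms assigned to $\word$ and $\word^{\prime}$ agrees with the $1$-morphism assigned to $\word\word^{\prime}$ only up to the coherence isomorphisms of iterated tensor products --- this is precisely the source of the weakness (non-strictness) of $\functorGno$, and one must keep track of these structural $2$-isomorphisms when comparing $\hcomp$ on the two sides. Because every bimodule involved is biprojective and, after \eqref{eq:bimod-recall-first}, every assigned $2$-morphism is a tensor product of one of the maps \eqref{eq:assignment-dotsA-a} to \eqref{eq:assignment-triA-b} with identity maps, these coherence data are canonical and the check is entirely routine; the only non-vacuous step is the degree bookkeeping with the shifts $\{\shiftme{-1}\}$, which is why the detailed verification is omitted.
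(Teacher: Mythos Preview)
Your proposal is correct and follows precisely the approach the paper has in mind: the lemma is marked with a \qedmake\ because the authors regard it as a direct consequence of the general fact that homogeneous bimodule maps between biprojective bimodules induce natural transformations via $-\qgtimes(-)$, together with the standard compatibility of this passage with the Godement product and with coherence in the bicategory $\Endff(\cellcatG)$. You have simply spelled out the degree bookkeeping and the coherence remarks that the paper leaves implicit.
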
 

\begin{lemman}\label{lemma:ADE-weights}
The weightings from \fullref{definition:these-numbers} 
are \eqref{eq:barb2}-weightings.\qedmake
\end{lemman}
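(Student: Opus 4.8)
The plan is to verify the two conditions in \fullref{definition:these-numbers-2} defining an \eqref{eq:barb2}-weighting: that every weight $\lscalar_{\bbk{i}}$ is invertible in $\zq$, and that $A(\g)\llscalar=-[2]_\qpar\cdot\llscalar$. For invertibility I would first observe that, since $\qpar$ is a primitive $2n$th root of unity, $[k]_\qpar\neq 0$ for $1\leq k\leq n-1$ (otherwise $\qpar^{2k}=1$ would force $n\mid k$) and $\tfrac12\in\ZQ$; as each weight listed in \fullref{definition:these-numbers} is of the form $\pm[k]_\qpar$, $\pm\tfrac12[k]_\qpar$ or $\pm[k]_\qpar/[l]_\qpar$ with $1\leq k,l\leq n-1$, this makes every weight a unit in $\ZQ$. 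The rest of the proof is the eigenvalue equation, which by \eqref{eq:eigenvector-problem-2} breaks into one scalar identity $-[2]_\qpar\lscalar_{\bbk{i}}=\sum_{\bbk{i}\gconnect{}\bbk{j}}\lscalar_{\bbk{j}}$ per vertex $\bbk{i}$ of $\g$.

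The only facts about $\qpar$ I would use are the quantum-integer recursion $[k{-}1]_\qpar+[k{+}1]_\qpar=[2]_\qpar[k]_\qpar$ (with $[0]_\qpar=0$, $[1]_\qpar=1$, obtained by expanding $(\qpar+\qpar^{-1})[k]_\qpar$), the product rule $[a]_\qpar[b]_\qpar=[a{+}b{-}1]_\qpar+[a{+}b{-}3]_\qpar+\dots+[|a{-}b|{+}1]_\qpar$, and the consequences of $\qpar^{n}=-1$, namely $[n]_\qpar=0$, $[n{\pm}k]_\qpar=\mp[k]_\qpar$, and $\qpar^{n/2}+\qpar^{-n/2}=0$ for $n$ even. (Conceptually the $\lscalar_{\bbk{i}}$ are the sign-twisted, suitably normalized quantum-dimension eigenvector of \cite[Section 6]{KO1}, cf. \fullref{remark:these-numbers}, but a direct check is quicker.)

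I would then run through the vertex identities arm by arm. Along any arm the weights are $\pm[k]_\qpar$ with consecutive $k$ and alternating signs, so the identity at an interior vertex is immediately an instance of the recursion, and the identity at the ``grounded'' leaf (weight $\pm[1]_\qpar$, neighbour $\mp[2]_\qpar$) is the recursion together with $[0]_\qpar=0$. At the trivalent vertices the short-arm weights are designed so that the identity again reduces to the recursion: for instance in type $\typed{m}$ the two fork tips each carry $\pm\tfrac12[m{-}1]_\qpar$ and hence contribute $\pm[m{-}1]_\qpar$ in total, so the identity is $[m{-}3]_\qpar+[m{-}1]_\qpar=[2]_\qpar[m{-}2]_\qpar$; the trivalent checks in types $\typee{7}$ and $\typee{8}$ are similar after clearing the $[a]_\qpar/[b]_\qpar$ summands with the product rule. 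The identities that actually use that $\qpar$ has order $2n$ are only those at the far end of a long arm (types $\typea{m}$ and $\typed{m}$), at the trivalent vertex (type $\typee{6}$), and at the short-arm vertices (types $\typee{7}$, $\typee{8}$): for $\typea{m}$ ($n=m{+}1$) the identity is exactly $[n]_\qpar=0$; for $\typed{m}$ ($n=2m{-}2$) it is $[m]_\qpar=[m{-}2]_\qpar$, i.e. $\qpar^{n/2}+\qpar^{-n/2}=0$, which holds as $(\qpar^{n/2})^2=\qpar^n=-1$; and in the $\typeE$ cases it reduces, after the recursion and the product rule, to an identity such as $[8]_\qpar=[4]_\qpar+[2]_\qpar$ (when $n=18$), which I would verify directly from $\qpar$ being a primitive $2n$th root of unity in each of the three cases $n=12,18,30$.

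I expect the main obstacle to be purely organizational: keeping the alternating signs straight simultaneously across all arms and all $\ADE$ types, and carrying out the product-rule expansions in the $\typee{7}$ and $\typee{8}$ cases when reducing the relevant leaf identities to $\qpar^{2n}=1$. There is no conceptual difficulty beyond the three elementary identities listed above.
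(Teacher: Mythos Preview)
Your proposal is correct and is precisely the ``direct computation'' that the paper invokes but omits: the paper states just before the lemma that it ``follow[s] directly from the definitions respectively via direct computation, and the proofs are omitted,'' and supplies no argument beyond the $\qedmake$. Your vertex-by-vertex verification via the recursion $[k{-}1]_\qpar+[k{+}1]_\qpar=[2]_\qpar[k]_\qpar$, together with the identities forced by $\qpar^{2n}=1$ at the distinguished vertices (e.g.\ $[n]_\qpar=0$ in type $\typea{m}$, $[m]_\qpar=[m{-}2]_\qpar$ in type $\typed{m}$, and the short-arm identities in types $\typeE$), is exactly what that omitted computation amounts to, and your invertibility check against the definition of $\ZQ$ is the right way to handle the first condition.
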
 

\begin{proof}[Proof of \fullref{theorem:main-theorem}, part (a)]
We first note that, by 
\fullref{lemma:projective}, \fullref{lemma:bimodule-maps} and \fullref{lemma:nat-trafo}, 
the weak $2$-functor $\functorGno$, if well-defined, is between 
the stated $2$-categories.

Furthermore, if $\functorGno$ is well-defined, then 
it extends uniquely to the Karoubi envelope by 
e.g. \cite[Proposition 6.5.9]{Bor1}, and the corresponding 
diagram will commute by \fullref{proposition:action-infty} 
(note that $\GG{\cellcatG}\cong\cellrepgrg$).
Moreover, $\functorGno$ -- if well-defined -- clearly preserves all additional structures. 
Thus, it remains to check that $\functorGno$ is well-defined which amounts 
to checking that $\functorGno$ preserves the relations of $\dihedralcatbig{}$.

We start with the far-commutativity \eqref{eq:far-comm}, which is 
just the interchange law (il) in $\Endff(\cellcatG)$. Denote 
by $\natid$, $\nattrafo{f}$ and $\nattrafo{g}$ the images 
under $\functorGno$
of the Soergel diagrams in question. Then (by our conventions 
how to apply $\functorGno$ to arbitrary Soergel diagrams):
\begin{gather*}
\functorGno\left(
\begin{tikzpicture}[anchorbase, scale=.3]
	\draw [very thick, orchid] (-1,-2) to (-1,-1.25);
	\draw [very thick, orchid] (-1,1.25) to (-1,5);
	\draw [very thick, orchid] (-3,-2) to (-3,-1.25);
	\draw [very thick, orchid] (-3,1.25) to (-3,5);
	\draw [very thick, orchid] (1,-2) to (1,1.75);
	\draw [very thick, orchid] (1,4.25) to (1,5);
	\draw [very thick, orchid] (3,-2) to (3,1.75);
	\draw [very thick, orchid] (3,4.25) to (3,5);
	\draw [thick] (-3.25,-1.25) rectangle (-.75,1.25);
	\draw [thick] (.75,1.75) rectangle (3.25,4.25);
	\node at (-3,-2.35) {\tiny $\word_{l}$};
	\node at (-3,5.55) {\tiny $\word_{l^{\prime}}^{\prime}$};
	\node at (-1,-2.35) {\tiny $\word_{k{+}1}$};
	\node at (-1,5.55) {\tiny $\word_{k^{\prime}{+}1}^{\prime}$};
	\node at (1,-2.35) {\tiny $\word_{k}$};
	\node at (1,5.55) {\tiny $\word_{k^{\prime}}^{\prime}$};
	\node at (3,-2.35) {\tiny $\word_{1}$};
	\node at (3,5.55) {\tiny $\word_{1}^{\prime}$};
	\node at (-2,-1.625) {\,$\cdots$};
	\node at (2,-1.625) {\,$\cdots$};
	\node at (-2,4.625) {\,$\cdots$};
	\node at (2,4.625) {\,$\cdots$};
	\draw [dotted] (-3.25,1.5) to (3.25,1.5);
	\draw [dotted] (0,5) to (0,-2);
	\fill [white] (0,1.5) circle (.35cm);
	\fill [white] (0,3) circle (.35cm);
	\fill [white] (0,0) circle (.35cm);
	\node at (-2,0) {\tiny $g$};
	\node at (2,3) {\tiny $f$};
	\node at (-2,3) {\tiny $\iddia$};
	\node at (2,0) {\tiny $\iddia$};
	\node at (0,0) {\tiny $\hcomp$};
	\node at (0,3) {\tiny $\hcomp$};
	\node at (0,1.5) {\tiny $\vcomp$};
\end{tikzpicture}
\right)
=
(\natid\hcomp\nattrafo{f})\circ(\nattrafo{g}\hcomp\natid)
\stackrel{\mathrm{il}}{=}
(\natid\circ\nattrafo{g})\hcomp(\nattrafo{f}\circ\natid)\\
=(\nattrafo{g}\circ\natid)\hcomp(\natid\circ\nattrafo{f})
\stackrel{\mathrm{il}}{=}(\nattrafo{g}\hcomp\natid)\circ(\natid\hcomp\nattrafo{f})
=
\functorGno\left(
\begin{tikzpicture}[anchorbase, scale=.3]
	\draw [very thick, orchid] (-1,-2) to (-1,1.75);
	\draw [very thick, orchid] (-1,4.25) to (-1,5);
	\draw [very thick, orchid] (-3,-2) to (-3,1.75);
	\draw [very thick, orchid] (-3,4.25) to (-3,5);
	\draw [very thick, orchid] (1,-2) to (1,-1.25);
	\draw [very thick, orchid] (1,1.25) to (1,5);
	\draw [very thick, orchid] (3,-2) to (3,-1.25);
	\draw [very thick, orchid] (3,1.25) to (3,5);
	\draw [thick] (-3.25,1.75) rectangle (-.75,4.25);
	\draw [thick] (.75,-1.25) rectangle (3.25,1.25);
	\node at (-3,-2.35) {\tiny $\word_{l}$};
	\node at (-3,5.55) {\tiny $\word_{l^{\prime}}^{\prime}$};
	\node at (-1,-2.35) {\tiny $\word_{k{+}1}$};
	\node at (-1,5.55) {\tiny $\word_{k^{\prime}{+}1}^{\prime}$};
	\node at (1,-2.35) {\tiny $\word_{k}$};
	\node at (1,5.55) {\tiny $\word_{k^{\prime}}^{\prime}$};
	\node at (3,-2.35) {\tiny $\word_{1}$};
	\node at (3,5.55) {\tiny $\word_{1}^{\prime}$};
	\node at (-2,-1.625) {\,$\cdots$};
	\node at (2,-1.625) {\,$\cdots$};
	\node at (-2,4.625) {\,$\cdots$};
	\node at (2,4.625) {\,$\cdots$};
	\draw [dotted] (-3.25,1.5) to (3.25,1.5);
	\draw [dotted] (0,5) to (0,-2);
	\fill [white] (0,1.5) circle (.35cm);
	\fill [white] (0,3) circle (.35cm);
	\fill [white] (0,0) circle (.35cm);
	\node at (-2,3) {\tiny $g$};
	\node at (2,0) {\tiny $f$};
	\node at (-2,0) {\tiny $\iddia$};
	\node at (2,3) {\tiny $\iddia$};
	\node at (0,0) {\tiny $\hcomp$};
	\node at (0,3) {\tiny $\hcomp$};
	\node at (0,1.5) {\tiny $\vcomp$};
\end{tikzpicture}
\right).
\end{gather*}

Next, we check that the other relations hold. 
There are several cases depending on whether $\bbk{i}$ is in 
$\graphs$ or $\grapht$, as well as on the color of the involved 
Soergel diagrams. We do some of them and leave the other 
(completely similar) cases to the reader. We also omit 
to indicate the sources and targets of the maps. 
As usual, we write 
$x_{\bbii{i}}\in P_\bbii{i}$, ${}_\bbii{i}y\in{}_\bbii{i}P$ and 
${}_\bbii{i}z_\bbii{i}\in{}_\bbii{i}P_\bbii{i}$. We also 
calculate the assignments on the corresponding direct summands only.
\medskip

\noindent \textit{The Frobenius relation \eqref{eq:frob1}.}  
We get the $\algG$-bimodule maps
\begin{gather*}
\begin{aligned}
\begin{tikzpicture}[anchorbase, scale=.3]
	\draw [very thick, sea] (0,-2) to (2,-1) to (4,-2);
	\draw [very thick, sea] (0,2) to (2,1) to (4,2);
	\draw [very thick, sea] (2,-1) to (2,1);
	\node at (0,-2.35) {\tiny $\dicatsgen$};
	\node at (0,2.35) {\tiny $\dicatsgen$};
	\node at (4,-2.35) {\tiny $\dicatsgen$};
	\node at (4,2.35) {\tiny $\dicatsgen$};
\end{tikzpicture}
&\stackrel{\functorGno}{\rightsquigarrow}
\begin{cases}
x_{\bbii{i}}\otimes \bbii{i} \otimes {}_\bbii{i}y & \mapsto 
0,\\
x_{\bbii{i}}\otimes \bbii{i}|\bbii{i} \otimes {}_\bbii{i}y & \mapsto
\lscalar_{\bbii{i}}^{-1}\cdot 
x_{\bbii{i}}\otimes \bbii{i} \otimes {}_\bbii{i}y,
\end{cases}
\end{aligned}
\end{gather*}
\begin{gather*}
\begin{aligned}
\begin{tikzpicture}[anchorbase, scale=.3]
	\draw [very thick, sea] (0,-2) to (0,0) to (-2,2);
	\draw [very thick, sea] (0,0) to (2,2);
	\draw [very thick, sea] (4,-2) to (4,2);
	\node at (0,-2.35) {\tiny $\dicatsgen$};
	\node at (-2,2.35) {\tiny $\dicatsgen$};
	\node at (2,2.35) {\tiny $\dicatsgen$};
	\node at (4,-2.35) {\tiny $\dicatsgen$};
	\node at (4,2.35) {\tiny $\dicatsgen$};
\end{tikzpicture}
&\stackrel{\functorGno}{\rightsquigarrow}
x_\bbii{i}\otimes {}_\bbii{i}z_\bbii{i}\otimes {}_\bbii{i}y\mapsto
x_\bbii{i}\otimes \bbii{i}\otimes {}_\bbii{i}z_\bbii{i}\otimes {}_\bbii{i}y,
\end{aligned}
\end{gather*}
\begin{gather*}
\begin{aligned}
\begin{tikzpicture}[anchorbase, scale=.3]
	\draw [very thick, sea] (0,2) to (0,0) to (-2,-2);
	\draw [very thick, sea] (0,0) to (2,-2);
	\draw [very thick, sea] (-4,-2) to (-4,2);
	\node at (0,2.35) {\tiny $\dicatsgen$};
	\node at (-2,-2.35) {\tiny $\dicatsgen$};
	\node at (2,-2.35) {\tiny $\dicatsgen$};
	\node at (-4,-2.35) {\tiny $\dicatsgen$};
	\node at (-4,2.35) {\tiny $\dicatsgen$};
\end{tikzpicture}
&\stackrel{\functorGno}{\rightsquigarrow}
\begin{cases}
x_\bbii{i}\otimes {}_\bbii{i}z_\bbii{i}\otimes \bbii{i}\otimes {}_\bbii{i}y
&\mapsto 
0,\\
x_\bbii{i}\otimes {}_\bbii{i}z_\bbii{i}\otimes \bbii{i}|\bbii{i}\otimes {}_\bbii{i}y
&\mapsto 
\lscalar_{\bbii{i}}^{-1}\cdot
x_\bbii{i}\otimes {}_\bbii{i}z_\bbii{i} \otimes {}_\bbii{i}y,
\end{cases}
\end{aligned}
\end{gather*}
\begin{gather*}
\begin{aligned}
\begin{tikzpicture}[anchorbase, scale=.3]
	\draw [very thick, sea] (0,-2) to (0,0) to (-2,2);
	\draw [very thick, sea] (0,0) to (2,2);
	\draw [very thick, sea] (-4,-2) to (-4,2);
	\node at (0,-2.35) {\tiny $\dicatsgen$};
	\node at (-2,2.35) {\tiny $\dicatsgen$};
	\node at (2,2.35) {\tiny $\dicatsgen$};
	\node at (-4,-2.35) {\tiny $\dicatsgen$};
	\node at (-4,2.35) {\tiny $\dicatsgen$};
\end{tikzpicture}
&\stackrel{\functorGno}{\rightsquigarrow}
x_\bbii{i}\otimes {}_\bbii{i}z_\bbii{i}\otimes {}_\bbii{i}y\mapsto
x_\bbii{i}\otimes {}_\bbii{i}z_\bbii{i}\otimes \bbii{i}\otimes {}_\bbii{i}y,
\end{aligned}
\end{gather*}
\begin{gather*}
\begin{aligned}
\begin{tikzpicture}[anchorbase, scale=.3]
	\draw [very thick, sea] (0,2) to (0,0) to (-2,-2);
	\draw [very thick, sea] (0,0) to (2,-2);
	\draw [very thick, sea] (4,-2) to (4,2);
	\node at (0,2.35) {\tiny $\dicatsgen$};
	\node at (-2,-2.35) {\tiny $\dicatsgen$};
	\node at (2,-2.35) {\tiny $\dicatsgen$};
	\node at (4,-2.35) {\tiny $\dicatsgen$};
	\node at (4,2.35) {\tiny $\dicatsgen$};
\end{tikzpicture}
&\stackrel{\functorGno}{\rightsquigarrow}
\begin{cases}
x_\bbii{i}\otimes \bbii{i}\otimes {}_\bbii{i}z_\bbii{i}\otimes {}_\bbii{i}y
&\mapsto 
0,\\
x_\bbii{i}\otimes \bbii{i}|\bbii{i}\otimes {}_\bbii{i}z_\bbii{i}\otimes {}_\bbii{i}y
&\mapsto 
\lscalar_{\bbii{i}}^{-1}\cdot
x_\bbii{i}\otimes {}_\bbii{i}z_\bbii{i} \otimes {}_\bbii{i}y.
\end{cases}
\end{aligned}
\end{gather*}
These compose as claimed.
\medskip

\noindent \textit{The Frobenius relation \eqref{eq:frob2}.}
We get (keeping \eqref{eq:partner} in mind which kills a lot of terms):
\begin{gather*}
\begin{tikzpicture}[anchorbase, scale=.3]
	\draw [very thick, sea] (-2,2) to (-2,-2);
	\draw [very thick, sea] (0,2) to (0,0);
	\node at (0,2.35) {\tiny $\dicatsgen$};
	\node at (0,-2.35) {\tiny $\phantom{a}$};
	\node at (-2,2.35) {\tiny $\dicatsgen$};
	\node at (-2,-2.35) {\tiny $\dicatsgen$};
	\node at (0,0) {\Large $\bullets$};
\end{tikzpicture}
\stackrel{\functorGno}{\rightsquigarrow}
x_\bbii{i}\otimes {}_\bbii{i}y
\mapsto
\begin{aligned}
& \lscalar_{\bbii{i}}\cdot
\left(x_\bbii{i}\otimes {}_\bbii{i}y\pathm \bbii{i}\otimes \bbii{i}|\bbii{i}
+ x_\bbii{i}\otimes {}_\bbii{i}y\pathm \bbii{i}|\bbii{i}\otimes \bbii{i}\right)
\\
&+{\textstyle\sum_{\bbii{i}\gconnect{}\bbjj{j}}}\,
(\lscalar_{\bbii{i}}\cdot
x_\bbii{i}\otimes {}_\bbii{i}y\pathm \bbjj{j}|\bbii{i}\otimes\bbii{i}|\bbjj{j}),
\end{aligned}
\\
\begin{tikzpicture}[anchorbase, scale=.3]
	\draw [very thick, sea] (2,2) to (2,-2);
	\draw [very thick, sea] (0,2) to (0,0);
	\node at (0,2.35) {\tiny $\dicatsgen$};
	\node at (0,-2.35) {\tiny $\phantom{a}$};
	\node at (2,2.35) {\tiny $\dicatsgen$};
	\node at (2,-2.35) {\tiny $\dicatsgen$};
	\node at (0,0) {\Large $\bullets$};
\end{tikzpicture}
\stackrel{\functorGno}{\rightsquigarrow}
x_\bbii{i}\otimes {}_\bbii{i}y\mapsto 
\begin{aligned}
&\lscalar_{\bbii{i}}\cdot
\left(\bbii{i}\otimes \bbii{i}|\bbii{i}\pathm 
x_\bbii{i}\otimes {}_\bbii{i}y
+ \bbii{i}|\bbii{i}\otimes \bbii{i}\pathm x_\bbii{i}\otimes {}_\bbii{i}y\right)
\\
&+{\textstyle\sum_{\bbii{i}\gconnect{}\bbjj{j}}}\,
(\lscalar_{\bbii{i}}\cdot
\bbjj{j}|\bbii{i}\otimes\bbii{i}|\bbjj{j}\pathm x_\bbii{i}\otimes {}_\bbii{i}y).
\end{aligned}
\end{gather*}
The remaining two assignments were 
already calculated in \fullref{example:2functor-assignment}.
These compose with the assignments 
for the trivalent 
vertices from \eqref{eq:assignment-triA-a} and \eqref{eq:assignment-triA-b} 
as claimed. 
(This relies on \eqref{eq:partner}.) 
Let us stress that the \eqref{eq:barb2}-weighting scalars cancel, since 
the assignments from \eqref{eq:assignment-dotsA-b} 
and \eqref{eq:assignment-triA-b} are multiplied by inverse scalars.
\medskip

\noindent\textit{The needle relation \eqref{eq:needle}.} 
Directly by composing \eqref{eq:assignment-triA-b} and \eqref{eq:assignment-triA-a} 
(in this order).
\medskip

\noindent\textit{The first barbell forcing 
relation \eqref{eq:barb1}.} 
Using \eqref{eq:assignment-dotsA-a} and \eqref{eq:assignment-dotsA-b}, we get:
\begin{gather}\label{eq:floating-barbell1}
\begin{tikzpicture}[anchorbase, scale=.3]
	\draw [very thick, sea] (-2,-1) to (-2,1);
	\node at (-2,-1) {\Large $\bullets$};
	\node at (-2,1) {\Large $\bullets$};
\end{tikzpicture}
\stackrel{\functorGno}{\rightsquigarrow}
\begin{cases}
\bbii{i} \mapsto \lscalar_{\bbii{i}}\cdot 2\cdot
\bbii{i}|\bbii{i},
\\
\bbjj{j} \mapsto 
-[2]_{\qpar}\cdot\lscalar_{\bbjj{j}}\cdot
\bbjj{j}|\bbjj{j},
\\
\bbii{i}|\bbii{i}\text{ and }\bbii{i}|\bbjj{j}\text{ and }\bbjj{j}|\bbii{i}\text{ and }\bbjj{j}|\bbjj{j}
\mapsto 0,
\end{cases}
\end{gather}
where we used \eqref{eq:eigenvector-problem-2} to replace
${\textstyle\sum_{\bbii{i}\gconnect{}\bbjj{j}}}\,\lscalar_{\bbii{i}}$ 
by $-[2]_{\qpar}\cdot\lscalar_{\bbjj{j}}$.
Next, we have
\begin{gather}\label{eq:need-some-scalars-later1}
\begin{tikzpicture}[anchorbase, scale=.3]
	\draw [very thick, sea] (0,-2) to (0,-1);
	\draw [very thick, sea] (0,1) to (0,2);
	\node at (0,-2.35) {\tiny $\dicatsgen$};
	\node at (0,2.35) {\tiny $\dicatsgen$};
	\node at (0,-1) {\Large $\bullets$};
	\node at (0,1) {\Large $\bullets$};
\end{tikzpicture}
\stackrel{\functorGno}{\rightsquigarrow}
x_\bbii{i}\otimes {}_\bbii{i}y
\mapsto
\begin{aligned}
&\lscalar_{\bbii{i}}\cdot
\left(
(x_\bbii{i}\pathm\bbii{i})
\otimes
(\bbii{i}\vert\bbii{i}\pathm{}_\bbii{i}y)
+
(x_\bbii{i}\pathm\bbii{i}\vert\bbii{i})
\otimes
(\bbii{i}\pathm{}_\bbii{i}y)\right)
\\
&+{\textstyle\sum_{\bbii{i}\gconnect{}\bbjj{j}}}\,
(\lscalar_{\bbii{i}}\cdot
(x_\bbii{i}\pathm\bbjj{j}|\bbii{i})
\otimes
(\bbii{i}|\bbjj{j}\pathm{}_\bbii{i}y)).
\end{aligned}
\end{gather}
Therefore -- by \eqref{eq:floating-barbell1} -- we have
\begin{gather}\label{eq:need-some-scalars-later-missing}
\begin{aligned}
\begin{tikzpicture}[anchorbase, scale=.3]
	\draw [very thick, sea] (0,-2) to (0,2);
	\draw [very thick, sea] (-2,-1) to (-2,1);
	\node at (0,-2.35) {\tiny $\dicatsgen$};
	\node at (0,2.35) {\tiny $\dicatsgen$};
	\node at (-2,-1) {\Large $\bullets$};
	\node at (-2,1) {\Large $\bullets$};
\end{tikzpicture}
&\stackrel{\functorGno}{\rightsquigarrow}
x_\bbii{i}\otimes {}_\bbii{i}y
\mapsto
\begin{aligned}
& \lscalar_{\bbii{i}}\cdot
\left(2\cdot (\bbii{i}|\bbii{i}\pathm x_\bbii{i})\otimes {}_\bbii{i}y\right)
\\
&-[2]_{\qpar}\cdot
\lscalar_{\bbjj{j}}\cdot(
(\bbjj{j}|\bbjj{j}\pathm x_\bbii{i})\otimes {}_\bbii{i}y),
\end{aligned}
\\
\begin{tikzpicture}[anchorbase, scale=.3]
	\draw [very thick, sea] (0,-2) to (0,2);
	\draw [very thick, sea] (2,-1) to (2,1);
	\node at (0,-2.35) {\tiny $\dicatsgen$};
	\node at (0,2.35) {\tiny $\dicatsgen$};
	\node at (2,-1) {\Large $\bullets$};
	\node at (2,1) {\Large $\bullets$};
\end{tikzpicture}
&\stackrel{\functorGno}{\rightsquigarrow}
x_\bbii{i}\otimes {}_\bbii{i}y
\mapsto 
\begin{aligned}
&\lscalar_{\bbii{i}}\cdot\left(2\cdot x_\bbii{i}\otimes ({}_\bbii{i}y\pathm\bbii{i}|\bbii{i})\right)
\\
&-[2]_{\qpar}\cdot
\lscalar_{\bbjj{j}}\cdot(
x_\bbii{i}\otimes
({}_\bbii{i}y\pathm\bbjj{j}|\bbjj{j})).
\end{aligned}
\end{aligned}
\end{gather}
Note that, for all $x_\bbii{i}\in P_{\bbii{i}}$ 
and all ${}_\bbii{i}y\in {}_{\bbii{i}}P$, we have 
$\bbjj{j}|\bbjj{j}\pathm x_\bbii{i}=0={}_\bbii{i}y\pathm\bbjj{j}|\bbjj{j}$. This holds since 
all paths in $P_{\bbii{i}}$ start in 
$\bbii{i}$ and all paths in 
${}_{\bbii{i}}P$ end in $\bbii{i}$, and 
$\bbjj{j}|\bbjj{j}$ composes only 
with $\bbjj{j}$ to a non-zero element.
Moreover, we also have 
$x_\bbii{i}\pathm\bbjj{j}|\bbii{i}=0=\bbii{i}|\bbjj{j}\pathm{}_\bbii{i}y$. 
In total, all the terms involving {\color{tomato} tomato t} vanish.
For the rest one checks 
directly on all
possible $x_\bbii{i},{}_\bbii{i}y$ 
that the claimed equation holds. 
For example, in case $x_\bbii{i}=\bbii{i}$ and ${}_\bbii{i}y=\bbii{i}|\bbii{i}$ 
one gets $\lscalar_{\bbii{i}}\cdot 2\cdot\bbii{i}|\bbii{i}\otimes\bbii{i}|\bbii{i}$ 
from the sum of the two maps in 
\eqref{eq:need-some-scalars-later-missing}, and $\lscalar_{\bbii{i}}\cdot\bbii{i}|\bbii{i}\otimes\bbii{i}|\bbii{i}$ 
from the map in \eqref{eq:need-some-scalars-later1}.
\medskip

\noindent \textit{The second barbell forcing 
relation \eqref{eq:barb2}.}
Now the scaling will be crucial. 
  
Similarly to \eqref{eq:floating-barbell1} we get
\begin{gather}\label{eq:floating-barbell2}
\begin{tikzpicture}[anchorbase, scale=.3]
	\draw [very thick, tomato] (-2,-1) to (-2,1);
	\node at (-2,-1) {\Large $\bullett$};
	\node at (-2,1) {\Large $\bullett$};
\end{tikzpicture}
\stackrel{\functorGno}{\rightsquigarrow}
\begin{cases}
\bbjj{j} \mapsto \lscalar_{\bbjj{j}}\cdot 2\cdot
\bbjj{j}|\bbjj{j},
\\
\bbii{i} \mapsto -[2]_{\qpar}\cdot\lscalar_{\bbii{i}}\cdot
\bbii{i}|\bbii{i},
\\
\bbjj{j}|\bbjj{j}\text{ and }\bbjj{j}|\bbii{i}\text{ and }\bbii{i}|\bbjj{j}\text{ and }\bbii{i}|\bbii{i}
\mapsto 0,
\end{cases}
\end{gather}
The two not yet computed assignments are
\begin{gather}\label{eq:need-some-scalars-later2}
\begin{aligned}
\begin{tikzpicture}[anchorbase, scale=.3]
	\draw [very thick, sea] (0,-2) to (0,2);
	\draw [very thick, tomato] (-2,-1) to (-2,1);
	\node at (0,-2.35) {\tiny $\dicatsgen$};
	\node at (0,2.35) {\tiny $\dicatsgen$};
	\node at (-2,-1) {\Large $\bullett$};
	\node at (-2,1) {\Large $\bullett$};
\end{tikzpicture}
&\stackrel{\functorGno}{\rightsquigarrow}
x_\bbii{i}\otimes{}_\bbii{i}y
\mapsto
\begin{aligned}
& \lscalar_{\bbjj{j}}\cdot
\left(2\cdot (\bbjj{j}|\bbjj{j}\pathm x_\bbii{i})\otimes {}_\bbii{i}y\right)
\\
&-[2]_{\qpar}\cdot\lscalar_{\bbii{i}}\cdot(
(\bbii{i}|\bbii{i}\pathm x_\bbii{i})\otimes {}_\bbii{i}y),
\end{aligned}
\\
\begin{tikzpicture}[anchorbase, scale=.3]
	\draw [very thick, sea] (0,-2) to (0,2);
	\draw [very thick, tomato] (2,-1) to (2,1);
	\node at (0,-2.35) {\tiny $\dicatsgen$};
	\node at (0,2.35) {\tiny $\dicatsgen$};
	\node at (2,-1) {\Large $\bullett$};
	\node at (2,1) {\Large $\bullett$};
\end{tikzpicture}
&\stackrel{\functorGno}{\rightsquigarrow}
x_\bbii{i}\otimes{}_\bbii{i}y
\mapsto 
\begin{aligned}
&\lscalar_{\bbjj{j}}\cdot\left(
2\cdot x_\bbii{i}\otimes ({}_\bbii{i}y\pathm\bbjj{j}|\bbjj{j})\right)
\\
&-[2]_{\qpar}\cdot\lscalar_{\bbii{i}}\cdot(
x_\bbii{i}\otimes
({}_\bbii{i}y\pathm\bbii{i}|\bbii{i})).
\end{aligned}
\end{aligned}
\end{gather}
As before we have $\bbjj{j}|\bbjj{j}\pathm x_\bbii{i}=0={}_\bbii{i}y\pathm\bbjj{j}|\bbjj{j}$ 
and the factors with the $2$ die. 
That the claimed equation holds can be verified by a case-by-case check. 
(Hereby we stress that \fullref{lemma:ADE-weights} comes crucially into the game 
since it ensures that the scalars add up as they should.)

All together this shows that $\functorGno$ is well-defined.
\end{proof}

Note that in the proof above we never used that $\g$ is of $\ADE$ type, but 
rather that we have a \eqref{eq:barb2}-weighting. Thus, the above goes through -- 
mutatis mutandis -- for any bipartite graph with a \eqref{eq:barb2}-weighting.

Now we switch to $\C$ since we use some 
spectral theory below.

\begin{proof}[Proof of \fullref{lemma:unique-maps}]
Assume that we have fixed $\g$ of $\ADE$ type with 
the corresponding root of unity $\qpar$, its 
double-quiver algebra $\algG$ and its module category $\cellcatG$. 
Assume also that we have a (well-defined) weak $2$-functor 
$\functorGs\colon\dihedralcatbig{\infty}\to\Endff(\cellcatG)$ which on 
$1$-morphisms is equal to the weak $2$-functor 
$\functorGno$ from \fullref{subsec:diacataction}.

Next, recall the bases of the left 
and the right $\algG$-modules 
$P_{\bbk{i}}$ and ${}_{\bbk{i}}P$, as exemplified in \eqref{eq:projectives}. 
Using these bases, one easily sees that the image of the first 
of the two trivalent 
vertices has to be given by (using the notational conventions 
from above)
\[
\begin{tikzpicture}[anchorbase, scale=.3]
	\draw [very thick, sea] (0,-2) to (0,0) to (-2,2);
	\draw [very thick, sea] (0,0) to (2,2);
	\node at (0,-2.35) {\tiny $\dicatsgen$};
	\node at (-2,2.35) {\tiny $\dicatsgen$};
	\node at (2,2.35) {\tiny $\dicatsgen$};
%
\end{tikzpicture}
\stackrel{\functorGs}{\rightsquigarrow}
x_\bbii{i}\otimes {}_\bbii{i}y\mapsto  
\lscalar_\bbii{i}(\seasplit)\cdot x_\bbii{i}\otimes \bbii{i}\otimes {}_\bbii{i}y,
\]
for some scalar $\lscalar_\bbii{i}(\seasplit)\in\C$. This follows directly for  
degree reasons and the fact that the assignment should be a $\algG$-bimodule map.

For the same reasons, the first of the two dots has to be mapped to
\[
\begin{tikzpicture}[anchorbase, scale=.3]
	\draw [very thick, sea] (0,-2) to (0,0);
	\node at (0,-2.35) {\tiny $\dicatsgen$};
	\node at (0,2.35) {\tiny $\phantom{a}$};
	\node at (0,0) {\Large $\bullets$};
%
\end{tikzpicture}
\stackrel{\functorGs}{\rightsquigarrow}
x_\bbii{i} \otimes {}_\bbii{i}y
\mapsto
\lscalar_\bbii{i}(\seadotup)\cdot x_\bbii{i}\pathm {}_\bbii{i}y,
\]
for some scalar $\lscalar_\bbii{i}(\seadotup)\in\C$. Now, under the assumption that 
$\functorGs$ is well-defined, the 
second Frobenius relation \eqref{eq:frob2} holds, which implies that 
both scalars have to be invertible and satisfy 
$\lscalar_\bbii{i}(\seadotup)=(\lscalar_\bbii{i}(\seasplit))^{-1}$.

Using the same reasoning,  
(and \eqref{eq:needle}), we see that the image of the other two generators has to be given by
\begin{gather*}
\begin{tikzpicture}[anchorbase, scale=.3]
	\draw [very thick, sea] (0,2) to (0,0) to (-2,-2);
	\draw [very thick, sea] (0,0) to (2,-2);
	\node at (0,2.35) {\tiny $\dicatsgen$};
	\node at (-2,-2.35) {\tiny $\dicatsgen$};
	\node at (2,-2.35) {\tiny $\dicatsgen$};
%
\end{tikzpicture}
\stackrel{\functorGs}{\rightsquigarrow}
\begin{cases}
x_\bbii{i}\otimes \bbii{i} \otimes {}_\bbii{i}y & 
\mapsto 
0,\\
x_\bbii{i}\otimes \bbii{i}|\bbii{i} \otimes {}_\bbii{i}y & 
\mapsto 
\lscalar_\bbii{i}(\seamerge)\cdot x_\bbii{i}\otimes {}_\bbii{i}y.
\end{cases}
\\
\begin{tikzpicture}[anchorbase, scale=.3]
	\draw [very thick, sea] (0,2) to (0,0);
	\node at (0,2.35) {\tiny $\dicatsgen$};
	\node at (0,-2.35) {\tiny $\phantom{a}$};
	\node at (0,0) {\Large $\bullets$};
	\node at (1,0) {\tiny $\bbii{i}$};
\end{tikzpicture}
\stackrel{\functorGs}{\rightsquigarrow}
\begin{cases}
&\bbii{i}\mapsto \lscalar_\bbii{i}(\seadotdown)\cdot
\left(\bbii{i}\otimes\bbii{i}|\bbii{i}+\bbii{i}|\bbii{i} \otimes \bbii{i}\right),\\
&\bbjj{j}\mapsto
{\textstyle\sum_{\bbii{i}\gconnect{}\bbjj{j}}}\,
(\lscalar_\bbii{i}(\seadotdown)\cdot\bbjj{j}|\bbii{i}\otimes \bbii{i}|\bbjj{j}),
\end{cases}
\end{gather*}
for some invertible scalars 
$\lscalar_\bbii{i}(\seamerge),\lscalar_\bbii{i}(\seadotdown)\in\C$ satisfying 
$\lscalar_\bbii{i}(\seadotdown)=(\lscalar_\bbii{i}(\seamerge))^{-1}$.

Similarly for {\color{tomato}tomato} $\dihtgen$, where we get four 
invertible complex scalars 
which satisfy 
$\lscalar_\bbjj{j}(\todotup)=(\lscalar_\bbjj{j}(\tosplit))^{-1}$ 
and $\lscalar_\bbjj{j}(\todotdown)=(\lscalar_\bbjj{j}(\tomerge))^{-1}$.

It remains to check that there is no choice for the weighting.

To this end, we write $\lscalar_\bbii{i}(\seabarb)=
\lscalar_\bbii{i}(\seadotup)\cdot\lscalar_\bbii{i}(\seadotdown)$ 
and $\lscalar_\bbjj{j}(\tobarb)=
\lscalar_\bbjj{j}(\todotup)\cdot\lscalar_\bbjj{j}(\todotdown)$.
As in the proof of \fullref{theorem:main-theorem}, part (a),
we get
\begin{gather*}
\begin{aligned}
\begin{tikzpicture}[anchorbase, scale=.3]
	\draw [very thick, sea] (-2,-1) to (-2,1);
	\node at (-2,-1) {\Large $\bullets$};
	\node at (-2,1) {\Large $\bullets$};
%
\end{tikzpicture}
&\stackrel{\functorGs}{\rightsquigarrow}
\begin{cases}
\bbii{i} \mapsto \lscalar_\bbii{i}(\seabarb)\cdot 2\cdot
\bbii{i}|\bbii{i},
\\
\bbjj{j} \mapsto 
({\textstyle\sum_{\bbii{i}\gconnect{}\bbjj{j}}}\, \lscalar_\bbii{i}(\seabarb))\cdot
\bbjj{j}|\bbjj{j},
\\
\bbii{i}|\bbii{i}\text{ and }\bbii{i}|\bbjj{j}\text{ and }\bbjj{j}|\bbii{i}\text{ and }\bbjj{j}|\bbjj{j}
\mapsto 0,
\end{cases}
\\
\begin{tikzpicture}[anchorbase, scale=.3]
	\draw [very thick, tomato] (-2,-1) to (-2,1);
	\node at (-2,-1) {\Large $\bullett$};
	\node at (-2,1) {\Large $\bullett$};
%
\end{tikzpicture}
&\stackrel{\functorGs}{\rightsquigarrow}
\begin{cases}
\bbjj{j} \mapsto \lscalar_\bbjj{j}(\tobarb)\cdot 2\cdot
\bbjj{j}|\bbjj{j},
\\
\bbii{i} \mapsto 
({\textstyle\sum_{\bbjj{j}\gconnect{}\bbii{i}}}\,\lscalar_\bbjj{j}(\tobarb))\cdot
\bbii{i}|\bbii{i},
\\
\bbjj{j}|\bbjj{j}\text{ and }\bbjj{j}|\bbii{i}\text{ and }\bbii{i}|\bbjj{j}\text{ and }\bbii{i}|\bbii{i}
\mapsto 0.
\end{cases}
\end{aligned}
\end{gather*}
This gives the scaled versions of \eqref{eq:need-some-scalars-later1} 
and \eqref{eq:need-some-scalars-later2}.

Using the assumption that $\functorGs$ preserves the second  
barbell forcing relation \eqref{eq:barb2}, we can relate 
the $\lscalar_\bbii{i}(\seabarb)$'s and 
the $\lscalar_\bbjj{j}(\tobarb)$'s. Since relation \eqref{eq:barb2} holds, we see that 
an equation of the form \eqref{eq:eigenvector-problem-2} 
must be satisfied for the $\lscalar_\bbii{i}(\seabarb)$'s 
and the $\lscalar_\bbjj{j}(\tobarb)$'s.
Spectral theory shows that the eigenspace of $A(\g)$ for the eigenvalue $-[2]_{\qpar}$ 
is one-dimensional for $\ADE$ type graphs 
and the corresponding roots of unity, cf. \eqref{eq:spectrum}. 
This means that our chosen \eqref{eq:barb2}-weighting 
from \fullref{definition:these-numbers} is unique up to a scalar
$\scalargttt$ with
\[
\scalargttt
=
\lscalar_\bbii{i}(\seabarb)
\cdot\lscalar_\bbii{i}^{-1}
=
\lscalar_\bbjj{j}(\tobarb)
\cdot\lscalar_\bbjj{j}^{-1},
\quad
\text{for all }\bbii{i},\bbjj{j}\in\g.
\]
Observe that this amounts to saying that 
the corresponding products do not depend on the vertices.

The first part of the claim now follows by letting 
$\scalargt$ be $\scalargttt$ divided by the value of 
the end dot $\lscalar_\bbii{i}(\seadotup)$ and 
$\scalargtt$ be $\scalargttt$ divided by the value of 
the start dot $\lscalar_\bbii{i}(\seadotdown)$
(for any vertex).

Clearly, any consistent rescaling of 
the $\algG$-bimodule maps associated to the generating 
$2$-morphisms of $\dihedralcatbig{\infty}$ gives rise 
to an equivalence of $2$-representations.
\end{proof}

Note that the one-dimensionality of 
the eigenspace of $A(\g)$ for the eigenvalue $-[2]_{\qpar}$ 
fails in general for bipartite graphs. Consequently, 
\fullref{lemma:unique-maps} is not always true for 
bipartite graphs with a given \eqref{eq:barb2}-weighting.
\subsection{The finite case}\label{subsec:finite-case}

We again use the setup from \fullref{subsec:diacataction}.

\begin{proof}[Proof of \fullref{theorem:main-theorem}, part (b)]

By statement (a) of \fullref{theorem:main-theorem} 
-- and the evident analog of \fullref{lemma:bimodule-maps} for the 
maps from \eqref{eq:assignment-2nvertex} --
it only remains to check three extra relations, i.e. 
the relations \eqref{eq:twocolor1} to \eqref{eq:twocolor3} 
need to be preserved under the functor $\functorADE$. The 
relations \eqref{eq:twocolor1} and \eqref{eq:twocolor3} are clearly preserved, and we need 
to prove that \eqref{eq:twocolor2} is preserved if and only if 
$\g$ is of $\ADE$ type.
\medskip

\noindent\textit{``Only if''}. This is clear by 
\fullref{proposition:action-finite}, since 
otherwise $\GG{\cellcatG}$ 
would inherit the structure of an $\mathrm{H}_n$-module.
\medskip

\noindent\textit{``If''}. 
Note that the $\JWst{k}$ are well-defined 
for $k\in\{0,\dots,n\}$, 
both when $\qpar$ is generic and when $\qpar$ is a complex, primitive $2n$th root of unity.

Let us assume that $\g$ is of $\ADE$ type. 
Below we show that $\functorADE(\JWst{n})=0$, when $\qpar$ is 
our fixed complex, primitive $2n$th root of unity. This implies, 
by definition, that \eqref{eq:twocolor2} is preserved under 
$\functorADE$.

We have (see e.g. \cite[Section 3]{Kh2} 
and \cite[Section 2.2]{El1})
\begin{equation}\label{eq:khov1}
[V_{l}] = {\textstyle\sum_{l=0}^{k}}\; d_{l{+}1}^{k{+}1}\cdot[V_1^{\otimes k}],\quad 0\leq k\leq l,
\end{equation}
in the Grothendieck ring of $\slmod$ (the finite-dimensional 
type $1$ modules of quantum $\mathfrak{sl}_2$) for $\qpar$ not a root 
of unity or generic. Here the $V_l$'s are the simple 
objects of highest weight $l$, and the $d_l^k$'s are as in \eqref{eq:numbers}. 

The same holds in the 
semisimplified quotient category $\slmods$, when $\qpar$ is a complex, primitive
$2n$th root of unity 
as long as $l\leq n$. (See e.g. \cite[Theorem 3.1]{An1} for a 
version that works for even roots of unity. 
See also e.g. \cite[Section 5]{Sa1} for the 
semisimplified quotient.) Recall also that $[V_n]=0$ holds in $\GG{\slmods}$.

Note that the sign of $d_{l{+}1}^{k{+}1}$ alternates 
as $k$ runs from $0$ to $l$. Khovanov \cite[Theorem 1]{Kh2} showed that 
the right-hand side of \eqref{eq:khov1} is the Euler characteristic of a complex
\begin{gather*}
C_l^\ast\colon C_l^0 \to C_l^2 \to \dots \to C_l^{\text{\tiny\nicefrac{$l$}{$2$}}}, \;\text{for }l\text{ even},\;
C_l^\ast\colon C_l^0 \to C_l^2 \to \dots \to C_l^{\text{\tiny\nicefrac{$l{-}1$}{$2$}}}, \;\text{for }l\text{ odd},
\end{gather*}
which is acyclic in positive cohomological degree and $H^0(C_l^\ast)\cong V_l$.  More explicitly, 
\[
C_l^k=\left(V_1^{\otimes (l-2k)}\right)^{\oplus d_{l{+}1}^{k{+}1}},
\]
and the differential is defined as a direct sum obtained by applying the evaluation map 
$V_1\otimes V_1 \to \C$,  
multiplied by a suitable sign, to each neighboring tensor pair 
$V_1\otimes V_1$ in each copy of $V_1^{\otimes (l-2k)}$. 

Note that Khovanov proves his theorem for $\qpar=1$ and 
mentions that the proof also works for generic $\qpar$.
However, his proof follows by induction from the fact 
that $V_1\otimes V_l \cong V_{l-1}\otimes V_{l+1}$, which 
is also true in our case (by a special case of 
the so-called linkage principle) as long as $0\leq l\leq n-1$.  

Since $\slmods$ is a semisimple abelian category, its bounded derived category is 
also semisimple \cite[Section III.3]{GM1} and 
equivalent to ${\textstyle \bigoplus_{\Z}}\slmods$ by the homology functor. 
Moreover, the bounded derived category is equivalent to the homotopy category 
of bounded complexes, because all objects are projective, see e.g. \cite[Theorem 10.4.8]{Wei1}. 
Thus, $C_l^\ast$ is equivalent to $V_l$ (seen as a complex concentrated in homological degree zero).

By \cite[Proposition 1.2]{El1}, this implies that, for any $0\leq l\leq n$, the object $(\overline{\dihugen}_l,\JWst{l})$ in $\Kar(\dihedralcat{})$ is homotopy equivalent to a complex 
$D_l^\ast(\dihugen)$
with  
\[
D_l^k(\dihugen)=\overline{\dihugen}_{l}^{\oplus d_{l{+}1}^{k{+}1}},
\quad\text{for }\dihugen=\dihsgen,\dihtgen.
\]

Since this holds in the finite and infinite case alike, the definitions of the complex $D_l^\ast(\dihugen)$ and the homotopy equivalence with $(\overline{\dihugen}_l,\JWst{l})$ do not use any $2n$-vertices. For the same reason, the defining properties of the homotopy equivalence follow from the relations in $\dihedralcat{}$ which do not involve $2n$-vertices.

Since we already proved that $\functorADE$ preserves all the relations which do not involve $2n$-vertices, we see 
that $\functorADE(D_n^\ast(\dihugen))$ is a complex and that it is homotopy equivalent to $\functorADE(\overline{\dihugen}_n,\JWst{n})$. The latter has Euler characteristic equal to zero, by \fullref{proposition:action-finite}. Therefore, $\functorADE(\JWst{n})=0$, which is 
what we had to prove.
\end{proof}

\begin{example}\label{example:JW-is-zero}
In principle one could also check by hand that $\functorADE(\JWst{n})=0$ holds. 
This is a daunting task in general, but let us do a small case which is quite illustrative. 
Fix the type $\typea{2}$ graph with vertices $\bbii{1}$ and 
$\bbjj{2}$, and the weighting is 
$\lscalar_{\bbii{1}}=[1]_{\qpar}=1$ and 
$\lscalar_{\bbjj{2}}=-[2]_{\qpar}=-1$. In this case $n=3$ and $\qpar$ is the 
complex, primitive $6$th root of unity 
$\qpar=\exp(\scalebox{.9}{$\nicefrac{$\pi i$}{$3$}$})$. After a small calculation 
one gets
\[
\begin{tikzpicture}[anchorbase, scale=.3]
	\draw [very thick, sea] (-2,-2) to (0,-.5) to (0,.5) to (-2,2);
	\draw [very thick, tomato] (0,-2) to (0,-1.25);
	\draw [very thick, tomato] (0,1.25) to (0,2);
	\draw [very thick, sea] (2,-2) to (0,-.5) to (0,.5) to (2,2);
	\node at (0,-1.25) {\Large $\bullett$};
	\node at (0,1.25) {\Large $\bullett$};
	\node at (-2,-2.4) {\tiny $\dicatsgen$};
	\node at (-2,2.325) {\tiny $\dicatsgen$};
	\node at (0,-2.35) {\tiny $\dicattgen$};
	\node at (0,2.35) {\tiny $\dicattgen$};
	\node at (2,-2.4) {\tiny $\dicatsgen$};
	\node at (2,2.325) {\tiny $\dicatsgen$};
\end{tikzpicture}
\stackrel{\functorADE}{\rightsquigarrow}
\lscalar_{\bbii{1}}^{-1}\lscalar_{\bbjj{2}}\cdot\idmap_X,
\quad
\begin{tikzpicture}[anchorbase, scale=.3]
	\draw [very thick, tomato] (-2,-2) to (0,-.5) to (0,.5) to (-2,2);
	\draw [very thick, sea] (0,-2) to (0,-1.25);
	\draw [very thick, sea] (0,1.25) to (0,2);
	\draw [very thick, tomato] (2,-2) to (0,-.5) to (0,.5) to (2,2);
	\node at (0,-1.25) {\Large $\bullets$};
	\node at (0,1.25) {\Large $\bullets$};
	\node at (-2,-2.35) {\tiny $\dicattgen$};
	\node at (-2,2.35) {\tiny $\dicattgen$};
	\node at (0,-2.4) {\tiny $\dicatsgen$};
	\node at (0,2.325) {\tiny $\dicatsgen$};
	\node at (2,-2.35) {\tiny $\dicattgen$};
	\node at (2,2.35) {\tiny $\dicattgen$};
\end{tikzpicture}
\stackrel{\functorADE}{\rightsquigarrow}
\lscalar_{\bbii{1}}\lscalar_{\bbjj{2}}^{-1}\cdot\idmap_Y.
\]
Hereby $X$ and $Y$ are the corresponding tensor products 
for the boundary sequences. Hence, with our weighting, we get $\functorGno_3(\JWs{3})=0$ 
and $\functorGno_3(\JWt{3})=0$, since the 
two summands in their expressions in \fullref{example:jw} cancel.
\end{example}

\begin{proof}[Proof of \fullref{lemma:unique-maps2}]
If a $2$-representation of 
$\dihedralcatbig{n}$ agrees with our $\functorGno$ on $1$-morphisms, then its decategorification 
has to be isomorphic to $[\functorGno]$. Therefore, it has 
to kill $[\boldsymbol{\Theta}_{\wnull}]$. But $\boldsymbol{\Theta}_{\wnull}$ corresponds to the 
idempotent defined as the image of the $2$-colored version of the ``Jones--Wenzl projector''
$\JWst{n}$, so that has to be sent to zero by the above. 
This implies that the $2n$-valent vertex has to be sent to zero as well by \cite[(6.16)]{El1}.
\end{proof}
\subsection{Classification of dihedral \texorpdfstring{$2$}{2}-representations}\label{subsec:class}

We work over $\C$ in \fullref{theorem:main-proposition}.

\begin{proof}[Proof of \fullref{theorem:main-proposition}]

We have to prove the statements (a), (b), (c) and (d).
\medskip

\noindent\textit{Proof of statement (a)}\,. By \fullref{proposition:locally-finitary},  
$\Kar(\dihedralcat{n})^{\star}$ and $\Kar(\dihedralcat{\infty})^{\star}$ 
are graded (locally) finitary, and thus, we see that we actually get 
graded finitary, weak $2$-representations in \fullref{theorem:main-theorem}.

Hence, we only need to show that $\functorGno$ is simple transitive. 
Transitivity follows from the connectivity of $\g$. For each $P_{\bbk{i}}$ and $P_{\bbk{j}}$, 
apply an alternating sequence $\Functor{\Theta}_{\mathrm{alt}}$ 
of $\functors$ and $\functort$, starting in the opposite color of 
$\bbk{i}\in\g$, to $P_{\bbk{i}}$ of length determined 
by the minimal path connecting $\bbk{i}$ and $\bbk{j}$ in the graph $\g$. 
Then $P_{\bbk{j}}\{\shiftdeg\}$ will be a 
summand of $\Functor{\Theta}_{\mathrm{alt}}(P_{\bbk{i}})$ for 
some shift $\shiftdeg\in\Z$. This, by \fullref{remark:weak2}, 
shows transitivity.

It remains to show that there are no non-trivial ideals.
This is clearly true in case 
$\g$ has only one vertex. Otherwise, fix one {\color{sea}sea-green s} colored 
vertex $\bbii{i}$. 
Let us restrict the $2$-action to $\functors$ on the additive category 
generated by $P_{\bbii{i}}$. This category is equivalent to the category 
of graded $\dualalg$-modules for $\dualalg=\C[X]/(X^2)$. Hence, the $2$-action of $\functors$ is 
given by tensoring with the 
biprojective $\dualalg$-bimodule $\dualalg\otimes \dualalg$, see 
e.g. \cite[Section 3.3]{MM}. Thus, 
the non-identity endomorphism $\loopy{\bbii{i}}$ on $P_{\bbii{i}}$ cannot belong 
to any ideal which is stable under the $2$-action and 
does not contain any identity morphism 
(this follows from \cite[Proposition 2]{MM}). This, of course, 
holds for any vertex $\bbk{i}$, i.e. none of the maps 
$\loopy{\bbk{i}}$ are in such an ideal. 
Now, if  
$\bbii{i}|\bbjj{j}\colon P_{\bbjj{j}}\to P_{\bbii{i}}$ (or 
$\bbjj{j}|\bbii{i}\colon P_{\bbii{i}}\to P_{\bbjj{j}}$) would be 
contained in such an 
ideal, then, because it is a $2$-ideal, composition with
$\bbjj{j}|\bbii{i}\colon P_{\bbii{i}}\to P_{\bbjj{j}}$ (or 
$\bbii{i}|\bbjj{j}\colon P_{\bbjj{j}}\to P_{\bbii{i}}$) would imply that the 
result 
$\loopy{\bbii{i}}\colon P_{\bbii{i}}\to P_{\bbii{i}}$ 
(or $\loopy{\bbjj{j}}\colon P_{\bbjj{j}}\to P_{\bbjj{j}}$) belongs to 
the ideal, which is a contradiction.
\medskip

\noindent\textit{Proof of statement (b)}\,.
(Below we use ${}^{\prime}$ 
as a notation for anything related to $\g^{\prime}$. 
Similarly in the rest of the whole proof.)
First assume that $\g$ and $\g^{\prime}$ are 
isomorphic as bipartite graphs by some map $f\colon\g\to\g^{\prime}$. Such an isomorphism 
induces a reordering of the indecomposables 
of $\cellcatG$ and $\cellcatGnext$ via
$
P_{\bbii{i}}\mapsto P^{\prime}_{\bbii{f(i)}}$ and 
$
P_{\bbjj{i}}\mapsto P^{\prime}_{\bbjj{f(i)}}.
$ 
Thus, we can define a functor
\[
\functor{F}\colon\cellcatG\to\cellcatGnext,
\quad 
\functor{F}(P_{\bbk{i}})= P^{\prime}_{\bbk{f(i)}},
\]
which maps each morphism in $\cellcatG$ to the evident 
($f$-reordered) morphism in $\cellcatGnext$. 
This is well-defined since $f$ is an isomorphism of bipartite graphs 
(in particular, for each $\bbk{i}\in\g$ the image $f(\bbk{i})\in\g^{\prime}$ has precisely 
the same two-step connectivity neighborhood).
This functor is 
clearly structure preserving, and it is an equivalence  
since for all $\bbk{i}\in\g$ the valencies are preserved 
(which shows that the functor induces isomorphisms between the finite-dimensional hom-spaces).
Thus, $\functor{F}$ is a structure preserving 
equivalence of categories.

By construction, $\functors$ is given by tensoring with 
sums of $\algG$-bimodules of the form 
$P_{\bbk{i}}\{\shiftme{-1}\}\otimes {}_{\bbk{i}}P$,
while $\functors^{\prime}$ 
is given by tensoring with 
sums of $\algG$-bimodules of the form 
$P^{\prime}_{\bbk{i}}\{\shiftme{-1}\}\otimes {}_{\bbk{i}}P^{\prime}$. Thus, there 
is a commuting diagram
\[
\xymatrix{
\cellcatG\ar[r]^{\functors}\ar[d]_{\functor{F}} & \cellcatG\ar[d]^{\functor{F}}\\
\cellcatGnext\ar[r]_{\functors^{\prime}} & \cellcatGnext,
}
\]
which similarly exists for $\functort$ and $\functort^{\prime}$ as well.
Hence, we have $\functors^{\prime}=\functor{F}\functors\functor{F}^{-1}$ 
and $\functort^{\prime}=\functor{F}\functort\functor{F}^{-1}$.
This means that $\functorGno$ and $\functorGno^{\prime}$ agree 
on $1$- and on $2$-morphisms of $\dihedralcatbig{}$ up to $\functor{F}$-conjugation.
Thus, this induces a (degree-zero) modification between 
$\widetilde{\functorGno}$ and $\widetilde{\functorGno} {}^{\prime}$, which is a $2$-isomorphism.

Vice versa, any equivalence between $\widetilde{\functorGno}$ and 
$\widetilde{\functorGno} {}^{\prime}$ sends indecomposables to indecomposables, so it descends 
to an intertwining isomorphism of the form 
\[
\GG{\cellcatG}
\xrightarrow{\cong}
\GG{\cellcatGnext},
\quad
[P_{\bbk{i}}]\mapsto[P^{\prime}_{\bbk{i^{\prime}}}]
\] 
between the Grothendieck groups. Such an isomorphism can neither send a
$[P_{\bbii{i}}]$ to a $[P^{\prime}_{\bbjj{i^{\prime}}}]$ 
nor a $[P_{\bbjj{i}}]$ to a $[P^{\prime}_{\bbii{i^{\prime}}}]$, since 
this will not intertwine 
the action of $[\functors]$ and $[\functort]$ to 
$[\functors^{\prime}]$ and $[\functort^{\prime}]$. 
In particular, the number of {\color{sea}sea-green} $\bbii{i}$ and 
{\color{tomato}tomato} $\bbjj{j}$ colored vertices 
has to be the same for $\g$ and $\g^{\prime}$.
Further, for the same reasons, if such an isomorphism sends  
$[P_{\bbk{i}}]$ and $[P_{\bbk{j}}]$ to 
$[P^{\prime}_{\bbk{i^{\prime}}}]$ and $[P^{\prime}_{\bbk{j^{\prime}}}]$, and $\bbk{i},\bbk{j}$ 
are connected via an edge in $\g$, then 
$\bbk{i}^{\prime},\bbk{j}^{\prime}$ 
are connected via an edge in $\g^{\prime}$. 
In total, such a permutation gives rise to an isomorphism 
of bipartite graphs $f\colon\g\to\g^{\prime}$ 
defined via $f(\bbk{i})=\bbk{i}^{\prime}$.
\medskip

\noindent\textit{Proof of statement (c)}\,.
First --
by using traces -- one can see that two 
bipartite graphs $\g$ and $\g^{\prime}$ which are 
spectrum-color-inequivalent give non-isomorphic 
$\mathrm{H}$-modules. Precisely, for all $k\in\Z_{\geq 0}$, 
we have (top: even $k$; bottom: odd $k$):
\begin{gather}\label{eq:st-matrices}
\begin{gathered}
[\Functor{\Theta}_{\overline{\dihsgen_{k}}}]
=
(AA^{\mathrm{T}})^{\tfrac{k-2}{2}}\cdot\!
\begin{pmatrix} 
AA^{\mathrm{T}} & A[2]_{\vpar}\\ 
0 & 0\\
\end{pmatrix}\!
,\;
[\Functor{\Theta}_{\overline{\dihtgen_{k}}}]
=
(A^{\mathrm{T}}\! A)^{\tfrac{k-2}{2}}\cdot\!
\begin{pmatrix} 
0 & 0\\ 
A^{\mathrm{T}}[2]_{\vpar} & A^{\mathrm{T}}\! A\\
\end{pmatrix}\!,
\\
[\Functor{\Theta}_{\overline{\dihsgen_{k}}}]
=
(AA^{\mathrm{T}})^{\tfrac{k-1}{2}}\cdot\!
\begin{pmatrix} 
[2]_{\vpar} & A\\ 
0 & 0\\
\end{pmatrix}
,\;
[\Functor{\Theta}_{\overline{\dihtgen_{k}}}]
=
(A^{\mathrm{T}}\! A)^{\tfrac{k-1}{2}}\cdot\!
\begin{pmatrix} 
0 & 0\\ 
A^{\mathrm{T}} & [2]_{\vpar}\\
\end{pmatrix}.
\end{gathered}
\end{gather}
(Where $\Functor{\Theta}_{\overline{\dihsgen_{k}}}$ 
and $\Functor{\Theta}_{\overline{\dihtgen_{k}}}$ mean the evident 
analog of the notation from \fullref{subsec:dihedral-stuff}.)
Here $A$ is as in \eqref{eq:isadmatrix} for $A(\g)$. 
This works completely analogously for $[\Functor{\Theta}_{\overline{\dihsgen_{k}}}^{\prime}]$ 
and $[\Functor{\Theta}_{\overline{\dihtgen_{k}}}^{\prime}]$
as well, where one uses $A^{\prime}$ 
-- coming from $A(\g^{\prime})$ -- instead of $A$.

If $\vert \graphs\vert \neq\vert \graphs^{\prime}\vert$ 
or $\vert \grapht \vert \neq\vert \grapht^{\prime}\vert$, then
the bottom row in \eqref{eq:st-matrices} produces different traces, so the 
representations are non-isomorphic. 

If $S_{\g}\neq S_{\g^{\prime}}$, then one also gets non-isomorphic representations. To see this, note that 
the spectrum of the matrices in the top row associated 
to $[\Functor{\Theta}_{\overline{\dihsgen_{k}}}]$ 
and $[\Functor{\Theta}_{\overline{\dihtgen_{k}}}]$, is that of $(AA^{\mathrm{T}})^{\scalebox{.7}{$\nicefrac{$k{+}2$}{$2$}$}}$ and $(A^{\mathrm{T}}A)^{\scalebox{.7}{$\nicefrac{$k{+}2$}{$2$}$}}$, for all $k\in\Z_{\geq 0}$. A similar observation holds for the spectrum of $[\Functor{\Theta}_{\overline{\dihsgen_{k}}}^{\prime}]$ and $[\Functor{\Theta}_{\overline{\dihtgen_{k}}}^{\prime}]$. 
So these spectra consist of powers of the elements of $S_{\g}$ and $S_{\g^{\prime}}$, respectively.  
Since the spectrum is an invariant of the representation, the result follows. 

Next, let us assume 
that $A(\g)$ and $A(\g^{\prime})$ have the same 
spectra. Thus, by \eqref{eq:isadmatrix}, there exists a 
singular value decomposition of the form
\[
A=U\Sigma V^*,\quad\quad A^{\prime}=W\Sigma X^*
\]
with unitary, complex-valued matrices $U,V,W,X$ of the appropriate sizes. 
(Note that we work over $\C(\vpar)$, but $A(\g)$ and $A(\g^{\prime})$ 
are integral matrices. Hence, their singular value decompositions exist.) Thus,
\begin{align*}
\begin{pmatrix}
WU^* & 0\\
0 & XV^*
\end{pmatrix}
\![\functors]\!
\begin{pmatrix}
UW^* & 0\\
0 & VX^*
\end{pmatrix}
&=
\begin{pmatrix}
WU^* & 0\\
0 & XV^*
\end{pmatrix}\!
\begin{pmatrix}
[2]_{\vpar} & A\\
0 & 0
\end{pmatrix}\!
\begin{pmatrix}
UW^* & 0\\
0 & VX^*
\end{pmatrix}\\
&=
\begin{pmatrix}
[2]_{\vpar} & A^{\prime}\\
0 & 0
\end{pmatrix}
=
[\functors^{\prime}],
\end{align*}
and similarly 
-- using the same matrices for conjugation -- for $[\functort]$ and 
$[\functort^{\prime}]$. (Hereby we have also used 
that $\vert \graphs\vert =\vert \graphs^{\prime}\vert$ 
and $\vert \grapht \vert =\vert \grapht^{\prime}\vert$.)

This shows that there 
is a change of basis such 
that $[\functors]$ is sent to $[\functors^{\prime}]$ 
and $[\functort]$ is sent to $[\functort^{\prime}]$, showing that the underlying 
$\mathrm{H}$-modules are in fact isomorphic.
\medskip

\noindent\textit{Proof of statement (d)}\,.
By using the relations of $\dihedralcatbig{}$ one easily 
sees that
\[
\twoEnd_{\dihedralcatbig{}}(\varnothing)\cong\C[\barbells,\barbellt]=\polyalg,
\quad\quad\mathrm{deg}(\barbells)=\mathrm{deg}(\barbellt)=2,
\] 
where 
$\barbells,\barbellt$ are the ``floating barbells'', see \cite[Corollary 5.20 and Lemma 6.23]{El1}.

It follows from the barbell forcing relations \eqref{eq:barb1} 
and \eqref{eq:barb2} that the $2$-hom spaces in $\dihedralcatbig{}$ 
are graded $\polyalg$-bimodules with a generating set 
given by Soergel diagrams without ``floating components''. Hence, $\dihedralcatbig{}$ 
is defined over the polynomial 
algebra $\polyalg$, see \cite[Proposition 5.19 and Proposition 6.22]{El1}.

Now fix $n\in\Z_{>1}$ (the case $n=1$ is discussed 
in \fullref{example:main-2-cats-2-reps}).
Let $\polyalg^{\mathrm{W}_n}\subset \polyalg$ denote the subalgebra of invariant elements. The coinvariant 
algebra is defined as 
\[
\coalg^+_{\mathrm{W}_n}=\polyalg/I^+,
\]
where $I^+$ is the ideal generated by the elements in $\polyalg^{\mathrm{W}_n}$ which are 
homogeneous of positive degree.
Define
\begin{gather*}
z=\barbells\barbells-[2]_{\qpar}\cdot\barbells\barbellt+\barbellt\barbellt\in\C[\barbells,\barbellt],\\
Z={\textstyle\prod_{\word\in\mathrm{W}_n}}\word(2\cdot\barbells+[2]_{\qpar}\cdot\barbellt)\in\C[\barbells,\barbellt],\quad\text{where}
\\
\dihsgen(\barbells)=\barbells,\quad\dihtgen(\barbells)=\barbells-[2]_{\qpar}\cdot\barbellt,\quad
\dihsgen(\barbellt)=\barbellt-[2]_{\qpar}\cdot\barbells,\quad\dihtgen(\barbellt)=\barbellt.
\end{gather*}
(Note that the element $Z$ does not make sense for ``$n=\infty$''.)
By \cite[Claim 3.23]{El1}, the subalgebra $\polyalg^{\mathrm{W}_n}$ is isomorphic 
to $\C[z,Z]\subset\polyalg$. 

Thus, in order to check that our weak $2$-functors $\functorADE$ descend from $\dihedralcatbig{n}$ 
to $(\dihedralcat{n}^{\mathrm{f}})^{\ast}$, we have to check 
that $\functorADE(z)=0=\functorADE(Z)$.

First note that any product of two or more barbells is zero, because 
\[
\bbii{i}|\bbii{i}\pathm\bbii{i}|\bbii{i}=
\bbjj{j}|\bbjj{j}\pathm\bbii{i}|\bbii{i}=
\bbii{i}|\bbii{i}\pathm\bbjj{j}|\bbjj{j}=
\bbjj{j}|\bbjj{j}\pathm\bbjj{j}|\bbjj{j}=0.
\]
By \eqref{eq:floating-barbell1} and \eqref{eq:floating-barbell2}, this implies that $\functorADE(z)=0$. 

The same argument shows that $\functorADE(Z)=0$, e.g. we already have 
\[
(2\cdot\barbells+[2]_{\qpar}\cdot\barbellt)\dihsgen(2\cdot\barbells+[2]_{\qpar}\cdot\barbellt)
=(2\cdot\barbells+[2]_{\qpar}\cdot\barbellt)
((2-[2]_{\qpar}^2)\cdot\barbells+[2]_{\qpar}\cdot\barbellt)=0.
\]

When ``$n=\infty$'', one has $\polyalg^{\mathrm{W}_{\infty}}\cong\C[z]$. This case therefore follows in the same way as in the finite case.
\end{proof}

\begin{example}\label{example:SVD}
Note that the proof of (c) of \fullref{theorem:main-proposition} 
is effective in the sense that one can explicitly 
compute the change of basis matrix via the singular value decomposition. 
For the type $\typee{6}$ situation from \fullref{example-bigraph} 
one gets for instance
\begin{gather*}
UW^*
=
\frac{1}{2\sqrt{6}}\cdot
\begin{pmatrix}
\sqrt{6}-2 & -\sqrt{6}-2 & 2\\
-2 & -2 & 4\\
-\sqrt{6}-2 & \sqrt{6}-2 & 2
\end{pmatrix}
,\quad\quad
VX^*
=(UW^*)^{\mathrm{T}},
\end{gather*}
(using the ordering of the vertices from left to 
right, {\color{sea}sea-green} $\bullets$ 
before {\color{tomato}tomato} $\bullett$)
which gives the (highly non-integral) change of basis matrix.
\end{example}

\begin{proof}[Proof of \fullref{theorem:classification}]
By \fullref{theorem:main-proposition} it remains to rule out the case that there are 
graded simple transitive $2$-representations which are not as in \fullref{theorem:main-theorem}.

In order to rule these out: 
By (d) of \fullref{theorem:main-proposition} 
and \fullref{remark:strictification}, 
any graded simple transitive $2$-representation of $\Kar(\dihedralcat{n})^{\star}$ 
would give the corresponding simple transitive $2$-representation
of $\Kar(\dihedralcat{n}^{\mathrm{f}})$. 
The underlying quiver of such a 
$2$-representation is of $\ADE$ type and their 
action on the level of the Grothendieck groups is fixed up to change of basis, 
see \cite[Sections 6 and 7]{KMMZ}. This structure on the level of $1$-morphisms is preserved under 
strictification. By combining \fullref{lemma:unique-maps} and \fullref{lemma:unique-maps2} 
(``uniqueness of higher structure''), 
these are equivalent to the ones from \fullref{theorem:main-theorem}.
Indeed, we can define a degree-preserving autoequivalence of $\cellcatG$: 
on ob\-jects it is the identity; on morphisms it is given by multiplying 
any homogeneous $\algG$-bimodule map by (suitable) powers of $\scalargt$ and 
$\scalargtt$. By definition, this 
autoequivalence intertwines the $2$-representation with the $\scalargt,\scalargtt$-scaling 
and the one from \fullref{theorem:main-theorem}.
\end{proof}

Note that the proof of \fullref{theorem:classification} 
relies on \fullref{lemma:unique-maps}, whose proof 
(as given in this paper) uses the grading in an essential way. 

\begin{remark}\label{remark:different-qs}
In fact, in order to complete the classification in \cite{KMMZ} 
it remains to compare different choices for $\qpar$, i.e. we fix $\g$ and consider two 
different complex, primitive $2n$th
root of unity and the associated $2$-representations constructed 
in \fullref{subsec:diacataction}. 
We have to show that they are equivalent when working over $\C$ and over Soergel bimodules 
(since the Soergel bimodules 
``do not see the $\qpar$'s'').

Luckily, this follows immediately from \cite[Theorem 6.24]{El1}, which shows that the 
$2$-color Soergel calculi for different choices of $\qpar$ 
are equivalent. This equivalence gives rise to an equivalence of the two $2$-representations 
in question.
\end{remark}


\bibliographystyle{alphaurl}
\bibliography{cat-dihedral}
\end{document}